\theoremstyle{plain}
\newtheorem{theorem}{Theorem}[section]
\newtheorem{lemma}{Lemma}[section]
\newtheorem{proposition}{Proposition}[section]
\theoremstyle{definition}
\newtheorem{definition}{Definition}[section]
\theoremstyle{remark}
\newtheorem{remark}{Remark}[section]
\numberwithin{equation}{section}
\setlist{nosep}
\newcommand{\N}{\mathbb{N}}
\newcommand{\B}{\mathcal{B}}
\newcommand{\C}{\mathcal{C}}
\newcommand{\X}{\mathcal{X}}
\newcommand{\R}{\mathbb{R}}
\newcommand{\M}{\mathcal{M}}
\newcommand{\Z}{\mathcal{Z}}
\newcommand{\E}{\mathcal{E}}
\newcommand{\K}{\mathscr{K}}
\newcommand{\Space}{\mathcal{S}}
\newcommand{\LN}{\mathscr{L}^N}
\newcommand{\CBZ}{C_0(\Z)}
\newcommand{\MZ}{\mathcal{M}_1(\mathcal{Z})}
\newcommand{\MNZ}{\mathcal{M}_1^N(\mathcal{Z})}
\newcommand{\projinv}{\sigma^{-1}_0}
\newcommand{\Msi}{\M_{1,\psi}(\X)}
\newcommand{\DZ}{D([0,T],\Z)}
\newcommand{\MDZ}{\mathcal{M}_1(\DZ)}
\newcommand{\DMZ}{D([0,T],\mathcal{M}_1(\mathcal{Z}))}
\newcommand{\EQ}{\mathcal{E}_Q}
\newcommand{\EW}{\mathcal{E}_W}
\newcommand{\indf}{\mathbf{1}_}
\newcommand{\PN}{\mathbb{P}^N}
\newcommand{\PbarN}{\bar{\mathbb{P}}^N}
\newcommand{\EbarN}{\mathbb{\bar{E}}^N}
\newcommand{\PQN}{\mathcal{P}^N}
\newcommand{\PbarQN}{\bar{\mathcal{P}}^N}
\newcommand{\emp}{\text{emp}}
\title{A sufficient condition for the quasipotential to be the rate function of the invariant measure of countable-state mean-field interacting particle systems}
\author{Sarath Yasodharan\thanks{Most of this work was competed when SY was a PhD student at the Indian Institute of Science.} and Rajesh Sundaresan 
\\ {\small Brown University and Indian Institute of Science}}
\begin{document}
\maketitle
\begin{abstract}
This paper considers the family of invariant measures of Markovian mean-field interacting particle systems on a countably infinite state space and studies its large deviation asymptotics. The Freidlin-Wentzell quasipotential is the usual candidate rate function for the sequence of invariant measures indexed by the number of particles. The paper provides two counterexamples where the quasipotential is not the rate function. The quasipotential arises from finite horizon considerations. However there are certain barriers that cannot be surmounted easily in any finite time horizon, but these barriers can be crossed in the stationary regime. Consequently, the quasipotential is infinite at some points where the rate function is finite. After highlighting this phenomenon, the paper studies some sufficient conditions on a class of interacting particle systems under which one can continue to assert that the Freidlin-Wentzell quasipotential is indeed the rate function.

\vspace{10pt}

\noindent \textbf{MSC 2020 subject classifications:} Primary 60F10; Secondary 60K35, 82C22, 60J74, 90B15 \\
\noindent \textbf{Keywords:} Mean-field interaction, invariant measure, large deviations, static large deviation, Freidlin-Wentzell quasipotential, relative entropy
\end{abstract}

\section{Introduction}
For a broad class of Markov processes such as small-noise diffusions, finite-state mean-field models, simple exclusion processes, etc., it is well-known that the Freidlin-Wentzell quasipotential is the rate function that governs the large deviation principle (LDP) for the family of invariant measures \cite{freidlin-wentzell,sowers-90,borkar-sundaresan-12,farfan-etal-18}. The quasipotential is the minimum cost (arising from the rate function for a process-level large deviation principle) associated with trajectories of arbitrary but finite duration, with fixed initial and terminal conditions. We begin this paper with two counterexamples of independently evolving countable-state particle systems for which the quasipotential is not the rate function for the family of invariant measures. The family of invariant measures of these counterexamples satisfy the LDP with a suitable relative entropy as its rate function, and we show that the quasipotential is not the same as this relative entropy. Specifically, we show that there are points in the state space where the  rate function is finite, but the quasipotential is infinite. These points cannot be reached easily via trajectories of arbitrary but finite time duration. However the barriers to reach these points are surmounted in the stationary regime. There are however some sufficient conditions, at least on a family of such countable-state interacting particle systems, where the Freidlin-Wentzell quasipotential is indeed the correct rate function; this will be the main result of this paper. Intuitively, the sufficient conditions cut-down the speed of outward excursions and ensure that the insurmountable barriers for the finite horizon trajectories continue to be insurmountable in the stationary regime.

Before we describe the counterexamples and the main result, let us introduce some notations and describe the model of a countable-state mean-field interacting particle system. Let $\Z$ denote the set of non-negative integers and let $(\Z, \E)$ denote a directed graph on $\Z$. Let $\MZ$ denote the space of probability measures on $\Z$ equipped with the total variation metric (which we denote by $d$). For each $N \geq 1$, let $\MNZ \subset \MZ$ denote the set of probability measures on $\Z$ that can arise as empirical measures of $N$-particle configurations on $\Z^N$. For each $N \geq 1$, we consider a Markov process with  the infinitesimal generator acting on functions $f$ on $\MNZ$:
\begin{align}
\LN f(\xi) \coloneqq  \sum_{(z,z^\prime) \in \E} N\xi(z) \lambda_{z,z^\prime}(\xi) \left[f\left(\xi+\frac{\delta_{z^\prime}}{N} - \frac{\delta_z}{N}\right) - f(\xi)\right],\, \xi \in \MNZ;
\label{eqn:LN}
\end{align}
here $\lambda_{z,z^\prime}:\MZ \to \R_+$, $(z,z^\prime) \in \E$, are given functions that describe the transition rates and $\delta$ denotes the Dirac measure. Such processes arise as the empirical measure of  weakly interacting Markovian mean-field particle systems where the evolution of the state of a particle depends on the states of the other particles only through the empirical measure of the states of all the particles. Under suitable assumptions on the model, the martingale problem for $\LN$ is well posed and the associated Markov process possesses a unique invariant probability measure $\wp^N$. This paper highlights certain nuances associated with  the large deviation principle for the sequence $\{\wp^N, N \geq 1\}$ on $\MZ$.

Fix $T > 0$ and let $\mu^N_{\nu_N}$ denote the Markov process with initial condition $\nu_N \in \MNZ$ whose  infinitesimal generator is $\LN$. Its sample paths are elements of $D([0,T], \MNZ)$, the space of $\MNZ$-valued functions on $[0,T]$ that are right-continuous with left limits equipped with the Skorohod topology. Such processes have been well studied in the past. Under mild conditions on the transition rates, when $\nu_N \to \nu$ in $\MZ$ as $N \to \infty$, it is well-known that the family $\{\mu^N_{\nu_N}, N \geq 1\}$ converges in probability, in $\DMZ$, as $N \to \infty$ to the {\em mean-field limit}\footnote{See McKean~\cite{mckean-67} in the context of interacting diffusions and Bordenave et al.~\cite{bordenave-etal-12} in the context of countable-state mean-field models.}:
\begin{align}
\dot{\mu}(t) = \Lambda_{\mu(t)}^* \mu(t), \mu(0) = \nu,\, t \in [0,T];
\label{eqn:mve}
\end{align}
here $\dot{\mu}(t)$ denotes the derivative of $\mu$ at time $t$, $\Lambda_{\xi}$, $\xi \in \MZ$, denotes the rate matrix when the empirical measure is $\xi$ (i.e., $\Lambda_{\xi}(z,z^\prime) = \lambda_{z,z^\prime}(\xi)$ when $(z,z^\prime) \in \E$, $\Lambda_{\xi}(z,z^\prime) = 0$ when $(z,z^\prime) \notin \E$, and $\Lambda_{\xi}(z,z) = -\sum_{z^\prime \neq z} \lambda_{z,z^\prime}(\xi)$), and $\Lambda^*_{\xi}$ denotes the transpose of $\Lambda_\xi$. The above dynamical system on $\MZ$ is called the McKean-Vlasov equation. This mean-field convergence allows one to view the process $\mu^N_{\nu_N}$ as a small random perturbation of the dynamical system~\eqref{eqn:mve}. The starting point of our study of the asymptotics of $\{\wp^N, N \geq 1\}$ is the process-level LDP for $\{\mu^N_{\nu_N}, \nu_N \in \MNZ, N \geq 1\}$, whenever $\nu_N$ converges to $\nu$ in  $\MZ$. This LDP was established by  L\'eonard~\cite{leonard-95} when the initial  conditions are fixed, and by Borkar and Sundaresan~\cite{borkar-sundaresan-12} when the initial conditions converge\footnote{Often, as done in~\cite{borkar-sundaresan-12}, one lets $\nu_N$ be random, and only requires $\nu_N \rightarrow \nu$ in distribution, where $\nu$ is deterministic. For simplicity, we restrict $\nu_N$ to be deterministic.} in $\MZ$. The rate function of this LDP is governed by ``costs" associated with trajectories on $[0,T]$ with initial condition $\nu$, which we denote by $S_{[0,T]}(\varphi | \nu)$, $\varphi \in \DMZ$ (see~\eqref{eqn:rate-function-finite-duration} for its definition).

We assume that $\xi^*$ is the unique globally asymptotically stable equilibrium of~\eqref{eqn:mve}. Define the Freidlin-Wentzell quasipotential
\begin{align}
V(\xi) \coloneqq \inf \{ S_{[0,T]}(\varphi|\xi^*) : \varphi(0) = \xi^*, \varphi(T) = \xi, T > 0 \}, \, \xi \in \MZ.
\label{eqn:v}
\end{align}
From the theory of large deviations of the invariant measure of Markov processes~\cite{freidlin-wentzell,sowers-90,cerrai-rockner-05,borkar-sundaresan-12}, $V$ is a natural candidate for the rate function of the family $\{\wp^N, N \geq 1\}$.
\subsection{Two counterexamples}
\label{subsection:counterexamples-introduction}
We begin with two counterexamples for which  $V$ is not the rate function for the family of invariant measures. 
\subsubsection{Non-interacting M/M/1 queues}
\label{subsection:counterexamples-introduction-mm1}
\setlength{\unitlength}{1mm}
\begin{figure}
\centering
\begin{picture}(90,30)(5,0)
\thicklines
\put(-20,15){\circle{9}}
\put(0,15){\circle{9}}
\put(20,15){\circle{9}}
\put(80,15){\circle{9}}
\put(-21,14){$0$}
\put(-1,14){$1$}
\put(19,14){$2$}
\put(79,14){$z$}

\qbezier(-17,18)(-10,25)(-3,18)
\put(-8,21.5){\vector(1,0){0.1}}
\qbezier(3,18)(10,25)(17,18)
\put(12,21.5){\vector(1,0){0.1}}
\qbezier(23,18)(30,25)(37,18)
\put(32,21.5){\vector(1,0){0.1}}
\qbezier(63,18)(70,25)(77,18)
\put(72,21.5){\vector(1,0){0.1}}
\qbezier(-17,12)(-10,5)(-3,12)
\put(-11,8.5){\vector(-1,0){0.1}}
\qbezier(3,12)(10,5)(17,12)
\put(9,8.5){\vector(-1,0){0.1}}
\qbezier(23,12)(30,5)(37,12)
\put(29,8.5){\vector(-1,0){0.1}}
\qbezier(63,12)(70,5)(77,12)
\put(69,8.5){\vector(-1,0){0.1}}

\qbezier(83,18)(90,25)(97,18)
\put(91,21.5){\vector(1,0){0.1}}

\qbezier(83,12)(90,5)(97,12)
\put(89,8.5){\vector(-1,0){0.1}}

\put(42,15){\circle*{1}}
\put(50,15){\circle*{1}}
\put(58,15){\circle*{1}}
\put(102,15){\circle*{1}}
\put(110,15){\circle*{1}}
\put(118,15){\circle*{1}}

\put(-11,23){$\lambda_f$}
\put(9,23){$\lambda_f$}
\put(29,23){$\lambda_f$}
\put(69,23){$\lambda_f$}
\put(89,23){$\lambda_f$}
\put(-13,5){$\lambda_b$}
\put(6,5){$\lambda_b$}
\put(27,5){$\lambda_b$}
\put(67,5){$\lambda_b$}
\put(87,5){$\lambda_b$}
\end{picture}
\caption{Transition rates of an M/M/1 queue}
\label{fig:mm1}
\end{figure}
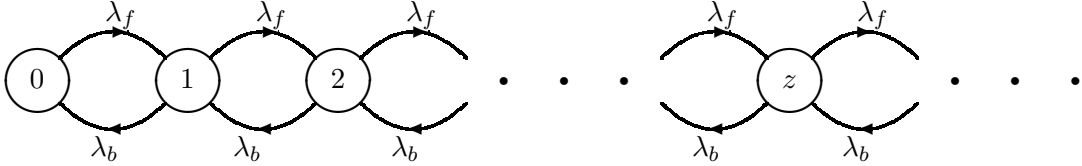
Consider the graph $(\Z, \EQ)$ whose edge set $\EQ$ consists of forward edges $\{(z,z+1), z \in \Z\}$ and backward edges  $\{(z,z-1), z \in \Z \setminus \{0\}\}$  (see Figure~\ref{fig:mm1}). Let $\lambda_f$ and $\lambda_b$ be two positive numbers. Consider the generator $L^Q$ acting on functions $f$ on $\Z$ by
\begin{align*}
L^Q f(z) \coloneqq \sum_{z^\prime:(z,z^\prime) \in \EQ} \lambda_{z,z^\prime}(f(z^\prime) - f(z)),\, z \in \Z,
\end{align*}
where $\lambda_{z,z+1} = \lambda_f$ for each $z \in \Z$ and $\lambda_{z,z-1} = \lambda_b$ for each $z \in \Z \setminus  \{0\}$. 
When $\lambda_f < \lambda_b$,  the invariant probability  measure associated with this  Markov process is
\begin{align*}
\xi^*_Q(z) \coloneqq \left(1-\frac{\lambda_f}{\lambda_b}\right) \left(\frac{\lambda_f}{\lambda_b} \right)^z,\, z \in \Z.
\end{align*}

For each $N \geq 1$, we consider $N$ particles, each of which evolves independently as a Markov process on $\Z$ with the infinitesimal generator $L^Q$. That is, the particles are independent M/M/1 queues. It is easy to check that the empirical measure of the system of particles is also a Markov process on the state space $\MNZ$ and it   possesses a unique invariant probability measure, which we denote by $\wp^N_Q$. 

On one hand, it is straightforward to see that the family $\{\wp^N_Q, N \geq 1\}$ satisfies the LDP on $\MZ$. Indeed, under stationarity, the state of each particle is distributed as $\xi^*_Q$. As a consequence, $\wp^N_Q$ is the law of the random variable
$\frac{1}{N}\sum_{n=1}^N \delta_{\zeta_n}$ on $\MZ$, where $\zeta_1, \ldots, \zeta_N$ are independent and identically distributed (i.i.d.) as $\xi^*_Q$. Therefore, by Sanov's theorem~\cite[Theorem~6.2.10]{dembo-zeitouni}, $\{\wp^N_Q, N \geq 1 \}$ satisfies the LDP with the rate function $I(\cdot \| \xi_Q^*)$,  where $I : \MZ \times \MZ \to [0, \infty]$ is the relative entropy defined by\footnote{We use the convention $0 \log 0 = 0$.}
\begin{align}
I(\zeta \| \nu) \coloneqq \left\{
\begin{aligned}
&\sum_{z \in \Z} \zeta(z)  \log\left(\frac{\zeta(z)}{\nu(z)}\right), &  \text{ if } \zeta \ll \nu, \\
&\infty, & \text{ otherwise.}
\end{aligned}
\right.
\label{eqn:entropy}
\end{align}
On the other hand, it is natural to conjecture that the rate function for the family $\{\wp^N_Q, N \geq 1\}$ is given by the quasipotential~\eqref{eqn:v} with $\xi^*$  replaced by $\xi_Q^*$. However, as discussed in the next paragraph,  the quasipotential is not the same as $I(\cdot \| \xi_Q^*)$. Hence, from the uniqueness of the large deviations rate function~\cite[Lemma~4.1.4]{dembo-zeitouni}, the quasipotential does not govern the rate function for  the family $\{\wp^N_Q, N \geq 1\}$.

We now provide some intuition on why the quasipotential is not the rate function in the example under consideration.  For a formal proof, see Section~\ref{section:counterexamples}.  We first introduce some notation. Let $\mathbb{R}^\infty$ denote the infinite product of $\R$ equipped with the product topology. We view  $\MZ$ as the subset $\{x \in \R^\infty: x_i \geq 0\, \forall i, \sum_{i \geq 0} x_i = 1\}$ of $\R^\infty$ with the subspace topology (e.g., see \cite[Chapter 3, Section 2]{durrett-13}). If $\xi, f \in \R^\infty$, we define
\begin{align}
\langle \xi, f \rangle \coloneqq \lim_{m \to \infty} \sum_{z = 0}^m \xi(z) f(z),
\label{eqn:inner-prod}
\end{align}
whenever the limit exists. Also, define $\vartheta: \Z \to \R_+$ by
\begin{align}
\vartheta(z) \coloneqq z \log z, \, z \in \Z,
\label{eqn:theta}
\end{align}
with the convention that $0 \log 0 = 0$, and define $\iota(z) \coloneqq z$, $z \in \Z$. Using the fact that $\xi^*_Q$ has geometric decay, it can be checked that $I(\xi \| \xi^*_Q)$ is finite if and only if the first moment of $\xi$ (i.e., $\langle \xi, \iota \rangle$) is finite. However it turns out that $V(\xi)$ (i.e., the quantity in~\eqref{eqn:v} with $\xi^*$ replaced by $\xi^*_Q$) is finite if and only if the $\vartheta$-moment of $\xi$ (i.e.,  $\langle \xi, \vartheta \rangle$) is finite. In particular, if we consider a $\xi \in \MZ$ whose first moment is finite but $\vartheta$-moment is infinite then $ V(\xi) \neq I(\xi \| \xi^*_Q)$. 
Let $\varepsilon > 0$, $\xi \in \MZ$ be such that $\langle \xi, \iota \rangle < \infty$ but $\langle \xi, \vartheta \rangle = \infty$, and consider the $\varepsilon$-neighbourhood of $\xi$ in $\MZ$. By Sanov's theorem, the  probability of this neighbourhood under $\wp^N_Q$ is of the form $\exp\{-N (I(\xi \| \xi^*_Q) + o(1))\}$. For a fixed $T > 0$, let us now try to estimate the probability of $\mu^N_{\nu_N}(T)$  being in this neighbourhood when $\nu_N$ is in a small neighbourhood of $\xi^*_Q$. If the process $\mu^N$ is initiated at a $\nu_N$ near $\xi^*_Q$, then the  probability that the random variable $\mu^N_{\nu_N}(T)$ is in the $\varepsilon$-neighbourhood of $\xi$ is at most 
\begin{align*}
\exp\left\{-N \left(\inf_{\{\xi^\prime:d(\xi,\xi^\prime) \leq \varepsilon\}} V(\xi^\prime)  +o(1)\right)\right\}.
\end{align*}
Since $V$ is lower semicontinuous (we prove this in Lemma~\ref{lemma:v-compactness-level-sets}), we must have
\begin{align*}
\inf_{\{\xi^\prime:d(\xi,\xi^\prime) \leq \varepsilon\}} V(\xi^\prime) \to \infty \text{ as } \varepsilon \to 0.
\end{align*}
Hence we can choose an  $\varepsilon$ small enough so that $\inf_{\{\xi^\prime:d(\xi,\xi^\prime) \leq \varepsilon\}} V(\xi^\prime) > 2 I(\xi \| \xi^*_Q)$. For this $\varepsilon$, the probability that $\mu^N_{\nu_N}(T)$ lies is the $\varepsilon$-neighbourhood of $\xi$ is upper bounded by $\exp\{-N \times (2I(\xi \| \xi^*_Q) + o(1))\}$, which is smaller than $\exp\{-N (I(\xi \| \xi^*_Q) + o(1))\}$, even in the exponential scale, for large enough $N$. That is, for any arbitrary but fixed $T$, we can find a small neighbourhood of $\xi$ such that the probability that $\mu^N_{\nu_N}(T)$ lies in that neighbourhood is smaller than  what we expect to see in the stationary regime. In other words, there are some barriers in $\MZ$ that cannot be surmounted in any finite time, yet these barriers can be crossed  in the stationary regime. These barriers indicate that, to obtain the correct stationary regime probability of a small neighbourhood of $\xi$ using the dynamics of $\mu^N_{\nu_N}$, one should wait longer than any fixed time horizon. That is, one should consider the random variable $\mu^N_{\nu_N}(T(N))$, where  $T(N)$ is a suitable function of $N$, and estimate the probability that $\mu^N_{\nu_N}(T(N))$ belongs to a small neighbourhood of $\xi$. However it is not straightforward to obtain such estimates from the process-level large deviation estimates of $\mu^N_{\nu_N}$ since the latter are usually available for a fixed time duration.

There are natural barriers in the context of finite-state mean-field models when the limiting dynamical system has multiple (but finitely many) stable equilibria~\cite{mypaper-1}. In such situations, passages from a neighbourhood of one equilibrium to a neighbourhood of another take place over time durations of the form $\exp\{N\times O(1)\}$ where $N$ is the number of particles\footnote{$O(1)$ refers to a bounded sequence, and $\omega(1)$ refers to a sequence that goes to $\infty$.}. Interestingly, these barriers can be surmounted using trajectories of finite time durations; i.e., for any fixed $T$, the probability that the empirical measure process reaches a neighbourhood of an  equilibrium at time $T$ when it is  initiated in a  small neighbourhood of another equilibrium  is of the form $\exp\{-N \times O(1)\}$. In contrast, in the case of the above counterexample, the barriers cannot be surmounted in finite time durations; for any fixed $T$, the probability that   $\mu^N(T)$  reaches a small neighbourhood of a point in $\MZ$ with finite first moment but infinite $\vartheta$-moment when it is initiated from a neighbourhood of $\xi^*_Q$ is of the form $\exp\{-N \times \omega(1)\}$.  Hence we anticipate that the barriers that we encounter in the above counterexample are somehow more difficult to surmount than those that arise in the case of  finite-state  mean-field models with  multiple stable equilibria. 
\subsubsection{Non-interacting nodes in a wireless network}
\label{subsection:counterexamples-introduction-wlan}
\setlength{\unitlength}{1mm}
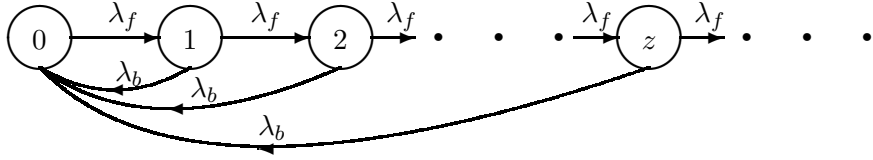
\begin{figure}
\centering
\begin{picture}(60,20)(0,0)
\thicklines
\put(-20,15){\circle{9}}
\put(0,15){\circle{9}}
\put(20,15){\circle{9}}
\put(61,15){\circle{9}}
\put(-21,13.5){$0$}
\put(-1,13.5){$1$}
\put(19,13.5){$2$}
\put(60,13.5){$z$}
\put(-16,15){\vector(1,0){12}}
\put(4,15){\vector(1,0){12}}
\put(24,15){\vector(1,0){6}}
\put(65,15){\vector(1,0){6}}
\put(51,15){\vector(1,0){6}}
\qbezier(0,11)(-10,5)(-20,11)
\qbezier(20,11)(-5,0)(-20,11)
\qbezier(61,11)(0,-10)(-20,11)
\put(-11,8){\vector(-1,0){0.1}}
\put(-3,5.5){\vector(-1,0){0.1}}
\put(9,0.2){\vector(-1,0){0.1}}
\put(33,15){\circle*{1}}
\put(41,15){\circle*{1}}
\put(49,15){\circle*{1}}
\put(74,15){\circle*{1}}
\put(82,15){\circle*{1}}
\put(90,15){\circle*{1}}

\put(-11,17){$\lambda_f$}
\put(8,17){$\lambda_f$}
\put(26,17){$\lambda_f$}
\put(52,17){$\lambda_f$}
\put(67,17){$\lambda_f$}
\put(-10,9.2){$\lambda_b$}
\put(0,7){$\lambda_b$}
\put(9,2){$\lambda_b$}
\end{picture}
\caption{Transition rates of a wireless node}
\label{fig:wlan}
\end{figure} 
We provide another counterexample where the issue is similar. Consider the graph $(\Z, \EW)$ whose edge set $\EW$ consists of forward edges $\{(z,z+1), z \in \Z\}$ and backward edges $\{(z,0), z \in \Z \setminus \{0\}\}$ (see Figure~\ref{fig:wlan}). Let $\lambda_f$ and $\lambda_b$ be positive numbers. Consider the generator $L^W$ acting on functions $f$ on $\Z$ by
\begin{align*}
L^W f(z) \coloneqq \sum_{z^\prime: (z,z^\prime) \in \EW}  \lambda_{z,z^\prime}(f(z^\prime) - f(z)), z \in \Z,
\end{align*}
where $\lambda_{z,z+1} = \lambda_f$ for each $z \in \Z$ and $\lambda_{z,0} = \lambda_b$ for each $z \in \Z \setminus \{0\}$.
The invariant probability measure associated with this Markov process is
\begin{align*}
\xi^*_W(z) \coloneqq \frac{\lambda_b}{\lambda_f + \lambda_b} \left(\frac{\lambda_f}{\lambda_f + \lambda_b} \right)^z,\, z \in \Z.
\end{align*} 

Similar to the previous example, for each $N \geq 1$, we consider $N$ particles, each of which evolves independently as a Markov process on $\Z$ with the infinitesimal generator $L^W$. It is easy to check that the empirical measure of the system of particles possesses a unique invariant probability measure, which we denote by $\wp_W^N$.  
Under stationarity, the state of each particle is distributed as $\xi^*_W$. As a consequence, $\wp^N_W$ is the law of the random variable $\frac{1}{N}\sum_{n=1}^N \delta_{\zeta_n}$ on $\MZ$,  where $\zeta_1, \ldots, \zeta_N$ are i.i.d. $\xi^*_W$. Hence, by Sanov's theorem, the family $\{\wp^N_W, N \geq 1\}$ satisfies the LDP with the rate function $I(\cdot \| \xi^*_W)$. As we show in Section~\ref{section:counterexamples}, in this example too, the quasipotential~\eqref{eqn:v} with $\xi^*$ replaced by $\xi_W^*$ is not the same as $I(\cdot \| \xi^*_W)$. As in the previous example, there are points $\xi$ where $V(\xi) = \infty$ but $I(\xi \| \xi_Q^*) < \infty$, points $\xi$ that have a finite first moment but infinite $\vartheta$-moment. Once again, the quasipotential does not govern the rate function for the family $\{\wp^N_W, N \geq 1\}$.
\subsection{Assumptions and main result}
We now provide some assumptions on the model of countable-state mean-field interacting particle systems  that ensure that the barriers in $\MZ$ that are insurmountable using trajectories of arbitrary but finite time duration remain insurmountable in the stationary regime as well. Under these assumptions, we prove the main result of this paper, i.e., the sequence  of invariant measures $\{\wp^N, N \geq 1\}$ satisfies the LDP with rate function $V$.
\subsubsection{Assumptions}
Our first set of assumptions is on the mean-field interacting particle system (i.e., on the generator $\LN$ defined in~\eqref{eqn:LN}).
\begin{enumerate}[label=({A\arabic*})]
\item The edge set is given by $\mathcal{E} = \{(z,z+1), z \in \Z \} \cup \{(z,0), z \in \Z \setminus \{0\}\}.$ \label{assm:a1}
\item There exist positive constants $\overline{\lambda}$ and $\underline{\lambda}$ such that
\begin{align*}
\frac{\underline{\lambda}}{z+1}\leq \lambda_{z,z+1}(\xi) \leq \frac{\overline{\lambda}}{z+1}, \text{ and }  \underline{\lambda} \leq \lambda_{z,0}(\xi) \leq \overline{\lambda},
\end{align*}
for all $\xi \in \MZ$. \label{assm:a2}
\item The functions $(z+1) \lambda_{z,z+1}(\cdot)$, $z \in \Z,$ and $\lambda_{z,0}(\cdot)$, $z \in \Z \setminus\{0\}$, are uniformly Lipschitz continuous on $\MZ$.  \label{assm:a3}
\end{enumerate}
Note that assumption~\ref{assm:a1} considers a specific transition graph  (Figure~\ref{fig:wlan}) for each particle. This graph arises  in the contexts of random backoff algorithms for  medium access in wireless local area networks~\cite{kumar-etal-06} and   decentralised control of loads in a smart grid~\cite{meyn-etal-15}. Assumption \ref{assm:a2} ensures that the forward transition rates at state $z$ decays as $1/z$. This key assumption cuts down the speed of outward excursions and  enables us to overcome the issue described in the counterexamples. To highlight this, consider a modified example of Section~\ref{subsection:counterexamples-introduction-wlan} where $\lambda_{z,z+1} = \lambda_f/(z+1)$, $z \in \Z$; the rest of the description remains the same.  Let $\tilde{\xi}_W \in \MZ$ denote the invariant probability measure associated with one particle. It can be checked  that $\tilde{\xi}_W(z)$ is of the order of $\exp\{-\vartheta(z)\}$, unlike $\xi^*_W$ which has geometric decay. As a consequence, $I(\xi \| \tilde{\xi}_W)$ is finite if and only if the $\vartheta$-moment of $\xi$ is finite. Hence, by imposing~\ref{assm:a2}, we have ensured that the barriers  in $\MZ$ that are insurmountable for finite time duration trajectories continue to remain  insurmountable in the stationary regime; this is the key property that enables us to prove the main result of this paper.  
Assumption \ref{assm:a3} is a uniform Lipschitz continuity property for the transition rates which is required for the process-level LDP for $\mu^N_{\nu_N}$ to hold and for the the McKean-Vlasov equation~\eqref{eqn:mve} to be well-posed.

Our second set of assumptions is on the McKean-Vlasov equation~\eqref{eqn:mve}. Let $\mu_\nu$, $\nu \in \MZ$, denote the solution  to the limiting dynamics~\eqref{eqn:mve} with initial condition $\nu \in \MZ$. Recall the function $\vartheta$. Define $\K_M \coloneqq \{\xi \in \MZ: \langle \xi, \vartheta \rangle \leq M\}$, $M > 0$.

\begin{enumerate}[label=({B\arabic*})]
\item There exists a unique globally asymptotically stable equilibrium $\xi^*$ for the McKean-Vlasov equation~\eqref{eqn:mve}. \label{assm:gase} 
\item $\langle \xi^*, \vartheta \rangle < \infty$ and  $ \lim_{t \to \infty} \sup_{\nu \in \K_M} \langle \mu_\nu(t), \vartheta \rangle =  \langle \xi^*, \vartheta \rangle$ for each $M > 0$. \label{assm:zlogz-convergence}
\end{enumerate}
The first assumption above asserts that all the  trajectories of~\eqref{eqn:mve} converge to $\xi^*$ as time becomes large. 
The proof of the LDP upper and lower bounds for the family $\{\wp^N, N \geq 1\}$ involves construction of trajectories that start at suitable compact sets, reach the stable equilibrium $\xi^*$ using arbitrarily small cost, and then terminate at a desired point in $\MZ$ starting from $\xi^*$. All these are enabled by assumption~\ref{assm:gase} (see more remarks about this assumption in Section~\ref{subsection:discussion}). The second assumption asserts  that the $\vartheta$-moment of the solution to the limiting dynamics converges uniformly over initial conditions lying in sets of bounded $\vartheta$-moment. In the case of a non-interacting system that satisfies \ref{assm:a1} but with constant forward transition rates (for example, see $L^W$ in Section~\ref{subsection:counterexamples-introduction-wlan}), the analogue of this assumption can easily be verified: the first moment of the solution to the limiting dynamics converges uniformly over initial conditions lying in  sets of bounded first moment. In fact, one can explicitly write down the first moment of the solution to the limiting dynamics in this case and verify this assumption easily. Assumption~\ref{assm:zlogz-convergence} is the analogous statement for our mean-field system that satisfies the $1/z$-decay of the forward transition rates in assumption~\ref{assm:a2}.

\subsubsection{Main result}
We now state the main result of this paper, namely the LDP for the family of invariant measures $\{\wp^N, N \geq 1\}$ under the assumptions \ref{assm:a1}--\ref{assm:a3} and \ref{assm:gase}--\ref{assm:zlogz-convergence}.

We first assert the existence and uniqueness of the invariant measure $\wp^N$ for $\LN$ for each $N \geq 1 $, and the exponential tightness of the family $\{\wp^N, N \geq 1\}$.
\begin{proposition}
Assume \ref{assm:a1} and \ref{assm:a2}. For each $N \geq 1$, $\LN$ admits a unique  invariant probability measure $\wp^N$. Further, the family $\{\wp^N, N \geq 1\}$ is exponentially tight in $\MZ$.
\label{prop:invariant-measure-exp-tightness}
\end{proposition}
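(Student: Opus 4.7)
The plan handles the two claims in turn. The process $\mu^N$ is a continuous-time Markov chain on the countable state space $\MNZ$. Irreducibility follows from \ref{assm:a1}--\ref{assm:a2}: from any configuration every particle can jump to $0$ at rate at least $\underline\lambda$, so $\delta_0$ is reached in finite expected time, and from $\delta_0$ every other state is reachable by successive forward jumps. Positive recurrence follows from Foster's criterion applied to the Lyapunov function $W(\xi)=\langle\xi,\iota\rangle$ with $\iota(z)=z$: using \ref{assm:a2},
\begin{align*}
\LN W(\xi)=\sum_{z}\xi(z)\lambda_{z,z+1}(\xi)-\sum_{z\geq 1}z\,\xi(z)\lambda_{z,0}(\xi)\leq \overline\lambda-\underline\lambda\, W(\xi),
\end{align*}
and the level sets $\{W\leq M\}\subset\MNZ$ are finite (since $\sum_i X_i\leq NM$ forces $X_i\leq NM$ for each $i$). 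Foster's criterion then yields positive recurrence, hence the unique invariant probability measure $\wp^N$.

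For exponential tightness, the plan is to find, for each $L>0$, a compact $K_L\subset\MZ$ with $\limsup_N N^{-1}\log\wp^N(K_L^c)\leq -L$. In the total variation topology, compact sets of $\MZ$ coincide with closed tight sets, so $K_M=\{\xi:\langle\xi,\iota\rangle\leq M\}$ is compact for every $M$ (tight by Markov's inequality; closed as a level set of the lower semicontinuous functional $\xi\mapsto\langle\xi,\iota\rangle$). By Chernoff's bound, the required estimate reduces to a uniform-in-$N$ bound
\begin{align*}
\mathbb{E}^{\wp^N}\bigl[\exp\{\alpha\textstyle\sum_{i=1}^N X_i\}\bigr]\leq e^{NC_\alpha}
\end{align*}
for some $\alpha>0$ and $C_\alpha<\infty$. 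I would establish this by analysing $V(\xi)=\exp\{\alpha N\langle\xi,\iota\rangle\}$, for which
\begin{align*}
\LN V(\xi)=V(\xi)\, N\Bigl\{\textstyle\sum_{z\geq 0}\xi(z)\lambda_{z,z+1}(\xi)(e^\alpha-1)+\sum_{z\geq 1}\xi(z)\lambda_{z,0}(\xi)(e^{-\alpha z}-1)\Bigr\}.
\end{align*}
By \ref{assm:a2}, the forward contribution is bounded by $\overline\lambda(e^\alpha-1)\sum_{z}\xi(z)/(z+1)$ and the backward contribution by $\underline\lambda(e^{-\alpha}-1)(1-\xi(0))<0$; for $\alpha$ sufficiently small the backward damping dominates as long as $\xi(0)$ is bounded away from $1$. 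Combining this with the invariance relation $\mathbb{E}^{\wp^N}[\LN V]=0$ together with the a priori control described below then yields the desired exponential moment estimate.

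The main obstacle is that the inequality $\LN V\leq A(\xi)\cdot N V$ produced above has a coefficient $A(\xi)$ that is not uniformly negative in $\xi$: when $\xi(0)$ approaches $1$, forward jumps from state $0$ can locally drive $V$ upward. The remedy I would use is to first establish, via an additive Foster--Lyapunov calculation on $\sum_i\vartheta(x_i)$ with $\vartheta(z)=z\log z$, the preliminary bound $\mathbb{E}^{\wp^N}[\langle\mu^N,\vartheta\rangle]\leq C$; the strengthened bound $(z+1)\lambda_{z,z+1}(\xi)\leq\overline\lambda$ in \ref{assm:a2} is exactly what is needed to absorb the $\log$ factor, and the resulting moment control dominates the contribution of the configurations where $\xi(0)$ is near $1$ in the exponential estimate. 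A further technicality is to justify that $\mathbb{E}^{\wp^N}[V]$ is finite to begin with; this can be handled by truncating $V\wedge e^{\alpha N R}$, applying stationarity to the bounded truncation, and letting $R\to\infty$ using the preliminary bound. An alternative route that sidesteps the two-regime split is a stochastic comparison with $N$ independent copies of the single-particle chain with rates $\overline\lambda/(z+1)$ forward and $\underline\lambda$ to $0$, whose stationary distribution has $O(1/z!)$ tails; Sanov's theorem then gives exponential tightness for the i.i.d.\ system, and a careful generator-level comparison transfers the bound to $\wp^N$.
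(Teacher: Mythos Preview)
Your existence and uniqueness argument via Foster's criterion with $W(\xi)=\langle\xi,\iota\rangle$ is correct and is a clean alternative to the paper's route. The paper instead runs a Krylov--Bogolyubov argument: it shows that the Ces\`aro means $\eta^N_T(\cdot)=T^{-1}\int_0^T\PN_{\delta_0}(\mu^N(t)\in\cdot)\,dt$ are tight, extracts an invariant measure, and deduces uniqueness from irreducibility. Both are standard; your way is more direct, while the paper's has the advantage that the very same estimate used to prove tightness of $\{\eta^N_T\}$ is reused verbatim for exponential tightness of $\{\wp^N\}$.

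For exponential tightness your main line has a genuine gap that your stated remedy does not close. The difficulty you identify is real: for the multiplicative Lyapunov function $V(\xi)=\exp\{\alpha N\langle\xi,\iota\rangle\}$ one has $\LN V=NV\cdot A(\xi)$, and $A(\xi)$ is not uniformly negative. But the bad set is not merely ``$\xi(0)$ close to $1$''. Consider $\xi=(1-\varepsilon)\delta_0+\varepsilon\delta_k$: here $A_+(\xi)\geq\underline\lambda(e^\alpha-1)(1-\varepsilon)$ while $A_-(\xi)\leq\overline\lambda\,\varepsilon$, so $A(\xi)>0$ for small $\varepsilon$ regardless of $k$; yet $\langle\xi,\iota\rangle=\varepsilon k$ and $\langle\xi,\vartheta\rangle=\varepsilon k\log k$ can both be made arbitrarily large. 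Thus configurations with $A(\xi)>0$ can simultaneously have $V(\xi)$ huge and $\langle\xi,\vartheta\rangle$ huge, so the preliminary bound $\mathbb{E}^{\wp^N}[\langle\mu^N,\vartheta\rangle]\leq C$ gives no leverage on the positive part of $\mathbb{E}^{\wp^N}[VA]$. The stationarity relation $\mathbb{E}^{\wp^N}[VA_-]=\mathbb{E}^{\wp^N}[VA_+]$ together with the crude bounds $A_+\leq\overline\lambda(e^\alpha-1)$ and $A_-\geq\underline\lambda(1-e^{-\alpha})(1-\xi(0))$ only yields $\mathbb{E}^{\wp^N}[V(1-\xi(0))]\leq(\overline\lambda/\underline\lambda)e^\alpha\,\mathbb{E}^{\wp^N}[V]$, which is vacuous since $\overline\lambda\geq\underline\lambda$.

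Your ``alternative route'' is in fact the paper's argument. The paper couples each particle with a dominating non-interacting particle having forward rate $\overline\lambda/(z+1)$ and reset rate $\underline\lambda$ (the coupling maintains $X^{\mathrm{int}}_n(t)\leq X^{\mathrm{non\text{-}int}}_n(t)$ when both start at $0$), and then applies Chebyshev with weight $\exp\{\beta\vartheta\}$ rather than Sanov: since the dominating stationary law $\pi$ has $\pi(z)\lesssim(\overline\lambda/\underline\lambda)^z/z!$, one has $\bar E_\pi[e^{\beta\vartheta}]<\infty$ for small $\beta$, whence $\PN_{\delta_0}(\mu^N(t)\notin\K_M)\leq e^{-N(M\beta-1)}$ uniformly in $t$. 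This estimate, uniform in $T$, passes to $\eta^N_T$ and then to any weak limit point, giving $\wp^N(\sim\!\K_{M'})\leq e^{-NM}$. Note that the paper works with the $\vartheta$-level sets $\K_M=\{\langle\xi,\vartheta\rangle\leq M\}$ rather than first-moment sets; this choice is immaterial for Proposition~\ref{prop:invariant-measure-exp-tightness} itself but is exactly the scale needed later for the quasipotential analysis.
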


Recall the quasipotential $V$ defined in~\eqref{eqn:v}. We now state the main result of this paper. 
\begin{theorem}
\label{thm:main-result}
Assume \ref{assm:a1}, \ref{assm:a2}, \ref{assm:a3}, \ref{assm:gase}, and \ref{assm:zlogz-convergence}. Then the family of probability measures $\{\wp^N, N \geq 1\}$ satisfies the large deviation principle on $\MZ$ with rate function $V$.
\end{theorem}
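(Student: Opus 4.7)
The plan is to execute the classical Freidlin--Wentzell scheme for the LDP of invariant measures in the countable-state setting, using as inputs the process-level LDP for $\{\mu^N_{\nu_N}\}$ established by L\'eonard and Borkar--Sundaresan together with the exponential tightness provided by Proposition~\ref{prop:invariant-measure-exp-tightness}. As a preliminary I would verify that $V$ is lower semicontinuous with compact sublevel sets in $(\MZ, d)$, as announced in Lemma~\ref{lemma:v-compactness-level-sets}: the key point is that \ref{assm:a2} forces $S_{[0,T]}$ to control the $\vartheta$-moment of the terminal state of any admissible trajectory, so $\{V \leq c\} \subset \K_M$ for a suitable $M$ and is therefore compact in $(\MZ, d)$.

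For the upper bound, fix a closed $F \subset \MZ$ and $\delta > 0$, and write, via stationarity,
\begin{align*}
\wp^N(F) = \int_{\MZ} \wp^N(d\nu)\, \PN_\nu(\mu^N(T) \in F),
\end{align*}
for any $T > 0$. Exponential tightness lets me restrict the integral to $\K_M$ at the cost of an additive $e^{-N(\inf_F V + 1)}$. For $\nu \in \K_M$, assumption \ref{assm:gase} together with \ref{assm:zlogz-convergence} supplies a time $T_1 = T_1(M,\delta)$ such that $\mu_\nu(T_1)$ lies in an arbitrarily small neighbourhood $U$ of $\xi^*$, \emph{uniformly} in $\nu \in \K_M$; the process-level LDP lower bound on shadowing this deterministic trajectory then gives $\PN_\nu(\mu^N(T_1) \in U) \geq 1 - e^{-NA}$ uniformly in $\nu \in \K_M$ for any $A > 0$. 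Applying the Markov property at $T_1$ and the process-level LDP upper bound on $[T_1, T_1 + T_2]$ from $U$ to $F$, together with the sub-additivity $V(\xi') \leq V(\nu') + S_{[0,T_2]}(\varphi|\nu')$ for any $\varphi$ with $\varphi(0) = \nu' \in U$ and $\varphi(T_2) = \xi' \in F$, and continuity of $V$ at $\xi^*$ to make $V$ uniformly small on $U$, the contribution from $\K_M$ is bounded by $e^{-N(\inf_F V - 2\delta)}$ for $N$ large. Sending $\delta \to 0$ yields $\limsup_N N^{-1}\log \wp^N(F) \leq -\inf_F V$.

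For the lower bound on an open $G \subset \MZ$, I pick $\xi \in G$ with $V(\xi) < \infty$ (the bound is otherwise vacuous) and $\delta > 0$, and fix a trajectory $\varphi^*$ with $\varphi^*(0) = \xi^*$, $\varphi^*(T) = \xi$, and $S_{[0,T]}(\varphi^*|\xi^*) \leq V(\xi) + \delta$. I would first show that any neighbourhood $U$ of $\xi^*$ receives non-vanishing mass under $\wp^N$: starting from any $\nu \in \K_M$ the McKean--Vlasov flow reaches $U$ in bounded time by \ref{assm:gase}--\ref{assm:zlogz-convergence}, the process-level LDP lower bound carries this over to $\mu^N_\nu$ at vanishing exponential cost, and stationarity combined with exponential tightness gives $\wp^N(U) \geq 1/2$ for $N$ large. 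Another application of stationarity and of the process-level LDP lower bound around a tube about $\varphi^*$ then delivers $\wp^N(G) \geq \tfrac12 \cdot e^{-N(V(\xi) + 2\delta)}$; letting $\delta \to 0$ and optimising over $\xi \in G$ completes the lower bound.

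The principal obstacle is the non-compactness of $\MZ$: the entire argument rests on the \emph{uniform} attraction of $\K_M$ to $\xi^*$ without an excursion in $\vartheta$-moment, which is precisely the content of \ref{assm:zlogz-convergence}. This is the stationary-regime analogue of the forward-rate decay in \ref{assm:a2} and is what rules out the scenario of Section~\ref{subsection:counterexamples-introduction}, in which mass could escape to infinity in the stationary regime even though it could not do so over any finite time horizon. Once this uniform control is in hand, the remainder of the proof is a careful adaptation of the now-standard Freidlin--Wentzell arguments to the countable-state mean-field setting.
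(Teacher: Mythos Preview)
Your overall plan follows the Freidlin--Wentzell scheme, and your lower bound is close to the paper's (the paper avoids the intermediate step of showing $\wp^N(U)\ge 1/2$ by directly constructing, for each $\zeta\in\K_M$, a trajectory $\varphi_\zeta$ that follows the McKean--Vlasov flow into $K(\varepsilon)$, then connects to $\xi^*$ with small cost via Lemma~\ref{lemma:connection-to-gase}, and finally runs the near-optimal path to $\xi$). There is, however, a genuine gap in your upper bound.

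The crux is the choice of the neighbourhood $U$. For the sub-additivity step $S_{[0,T_2]}(\varphi\mid\nu')\ge V(\xi')-V(\nu')$ to be useful you need $V$ uniformly small on $U$, but $V$ is \emph{not} continuous at $\xi^*$ in the total-variation metric: points arbitrarily $d$-close to $\xi^*$ can have arbitrarily large $\vartheta$-moment and hence arbitrarily large $V$ (Lemma~\ref{lemma:v-finiteness}). The neighbourhood on which $V$ is small must therefore be of the form $K(\Delta)=\{\xi:d(\xi,\xi^*)\le\Delta,\ |\langle\xi,\vartheta\rangle-\langle\xi^*,\vartheta\rangle|\le\Delta\}$; Lemma~\ref{lemma:v-continuity} is precisely the statement that $V$ is continuous for this joint convergence. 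But then your claim that $\PN_\nu(\mu^N(T_1)\in U)\ge 1-e^{-NA}$ ``by shadowing the McKean--Vlasov flow'' does not follow: the process-level LDP lives in the $d$-topology, and $d$-closeness of $\mu^N(T_1)$ to $\mu_\nu(T_1)$ gives no control on $\langle\mu^N(T_1),\vartheta\rangle$. The paper handles this by a different mechanism: Lemma~\ref{lemma:upper-bound-cost-lower-bound} shows that any trajectory starting in $\K_M$ that avoids $K(\Delta)$ at all integer times up to $m_0$ has cost exceeding $s$ (this uses that finite-cost trajectories are confined to some $\K_{M_1}$, together with~\ref{assm:zlogz-convergence}), and the uniform LDP \emph{upper} bound then controls the probability of avoidance. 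On the complementary event, the strong Markov property at the first integer hitting time of $K(\Delta)$, combined with Lemma~\ref{lemma:upper-bound-inclusion}, finishes the argument.

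A related issue affects your sketch of compactness of $\{V\le c\}$. The estimate~\eqref{eqn:compactness-v-km-series2} reads $\langle\varphi_T,\vartheta\rangle\le\langle\xi^*,\vartheta\rangle+S_{[0,T]}(\varphi\mid\xi^*)+\overline{\lambda}(e-1)T$, with a $T$-dependent term; since $V$ infimises over all $T>0$, this does not directly yield $\{V\le c\}\subset\K_M$. The paper obtains this inclusion indirectly (proof of Lemma~\ref{lemma:v-compactness-level-sets}) by playing the already-established LDP lower bound against the exponential tightness of $\{\wp^N\}$.
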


The proof of this result is carried out in Sections~\ref{section:ldp-lower-bound}--\ref{section:proof-completion}. We begin with the process-level uniform LDP for $\mu^N_{\nu_N}$ over compact subsets of $\MZ$; this uniform LDP gives us the large deviation estimates for the process $\mu^N_{\nu_N}$  uniformly over the initial conditions $\nu_N$ lying in a given compact set (see Definition~\ref{def:uniform-ldp} and Theorem~\ref{thm:uniform-ldp-mun}).  We prove the LDP for the family $\{\wp^N, N \geq 1\}$ by transferring this process-level uniform LDP for $\mu^N_{\nu_N}$ over compact subsets of $\MZ$  to the stationary regime. The proof of the LDP lower bound (in Section~\ref{section:ldp-lower-bound}) considers specific trajectories and lower bounds the probability of small neighbourhoods of points in $\MZ$ under $\wp^N$ using the probability that the process $\mu^N_{\nu_N}$ remains close to these trajectories. For the proof of the upper bound, we require certain regularity properties of the quasipotential. These properties are established in Section~\ref{section:v-properties}. We first show a controllability\footnote{This terminology is from Cerrai and R\"ockner~\cite{cerrai-rockner-05}.} property for $V$:  $V(\xi)$ is finite if and only if $\langle \xi, \vartheta \rangle < \infty$. Using the lower bound proved in Section~\ref{section:ldp-lower-bound}, we then show that the level sets of $V$ are compact subsets of $\MZ$.  Since  $\MZ$ is not locally compact and $V$ has compact lower level sets,  we do not expect $V$ to be continuous on $\MZ$. Indeed, if $\xi \in \MZ$ is such that $V$ is continuous at $\xi$ and $V(\xi) < \infty$, given $\varepsilon>0$  there exists a $\delta > 0$ such that $d(\xi^\prime, \xi) < \delta$ implies that $|V(\xi^\prime) - V(\xi)| < \varepsilon$. In particular, $\{\xi^\prime \in \MZ : V(\xi^\prime) \leq V(\xi) + \varepsilon\} \supset B(\xi, \delta)$.  Since $\{\xi^\prime \in \MZ : V(\xi^\prime) \leq V(\xi) + \varepsilon\}$ is compact in $\MZ$, this shows that $\xi$ has a relatively compact neighborhood in $\MZ$, which is a contradiction. This shows that, for any $\xi \in \MZ$ such that $V(\xi) < \infty$, $V$ is discontinuous at $\xi$. However we show the following  {\em small cost connection} property:  whenever $\xi_n \to  \xi^*$ in $\MZ$ and $\langle \xi_n, \vartheta \rangle \to \langle \xi^*, \vartheta \rangle$ as $n \to \infty$, we have $\lim_{n \to \infty}V(\xi_n) = V(\xi^*) = 0$. These properties of the quasipotential are then used to transfer the process-level uniform LDP upper bound for $\mu^N_{\nu_N}$ (uniform over compact subsets of $\MZ$) to the LDP upper bound for the family of invariant measures. The proof of the upper bound is carried out in Section~\ref{section:ldp-upper-bound}. Finally, we complete the proof of the theorem in Section~\ref{section:proof-completion}.

While the proofs of our lower and upper bounds follow the general methodology of Sowers~\cite{sowers-90}, there are significant model-specific difficulties that arise in our context. The main novelty in the proof of Theorem~\ref{thm:main-result} is to establish the small cost connection property  of the quasipotential $V$ under assumptions \ref{assm:a1}--\ref{assm:a3} and \ref{assm:gase}--\ref{assm:zlogz-convergence}. That is, we can find trajectories of small cost that start  at $\xi^*$ and end at points in $\MZ$ whose $\vartheta$-moment is not very far from that of $\xi^*$. In the work of Sowers~\cite{sowers-90}, this has been carried out by considering the ``straight-line" trajectory that connects the attractor to the nearby point under consideration. Such a trajectory may not have small cost in our case since the mass transfer is restricted to the edges in $\E$. We overcome this difficulty by considering a piecewise constant velocity mass transfer via the edges in $\E$. We then carefully estimate the cost of this trajectory and prove the necessary {\em small cost connection}  property. We also simplify the proof of the compactness of the  lower level sets of $V$; while Sowers~\cite[Proposition~7]{sowers-92} studies  the minimisation of the costs of trajectories over the infinite-horizon, we arrive at it by  using the LDP lower bound and the exponential tightness of the family $\{\wp^N, N \geq 1\}$.
We also remark that the methodology of Sowers~\cite{sowers-90} has been used by Cerrai and R\"ockner~\cite{cerrai-rockner-05} in the context of  stochastic  reaction diffusion equations and by Cerrai and Paskal~\cite{cerrai-paskal-20} in the context of two-dimensional  stochastic Navier-Stokes equations.
\subsection{Discussion and future directions}
\label{subsection:discussion}
The main result and the counterexamples suggest that in order for the family of invariant measures of a Markov process to satisfy the large derivation principle with rate function governed by the  Freidlin-Wentzell quasipotential, one must have some {\em good}  properties on the model under consideration. In the case of our main result, this goodness property was achieved by the $1/z$-decay of the forward transition rates from  assumption~\ref{assm:a2}. We use this assumption to show the  exponential tightness of the invariant measure over compact subsets with bounded $\vartheta$-moments. It also enables us to show the necessary regularity  properties of the quasipotential required to transfer the process-level large deviation result to the stationary regime. However a general treatment of the LDP for the family of  invariant measures of Markov processes (that encompasses the cases of~ \cite{sowers-90,cerrai-rockner-05,cerrai-paskal-20,borkar-sundaresan-12,farfan-etal-18}),  especially when the ambient state space is not locally compact, is missing in the literature.

One of the assumptions that plays a significant role in the proof of our main result is the existence of a unique globally asymptotically stable equilibrium for the limiting dynamics (assumption~\ref{assm:gase})\footnote{In the works of Sowers~\cite{sowers-90},  Cerrai and R\"ockner~\cite{cerrai-rockner-05}, and Cerrai and Paskal~\cite{cerrai-paskal-20}, their model assumptions ensure that~\ref{assm:gase} holds.}. In general, the limiting dynamical system~\eqref{eqn:mve} could possess multiple $\omega$-limit sets. In that case the approach of our proofs breaks down. A well-known approach to study large deviations of the invariant measures in such cases is to focus on  small neighbourhoods of these $\omega$-limit sets and then  analyse the  discrete time Markov chain that evolves on these neighbourhoods. The LDP then follows from the estimates of the invariant measure of this discrete time chain (see Freidlin and Wentzell~\cite[Chapter~6,~Section~4]{freidlin-wentzell}). However this approach requires the {\em uniform LDP over open subsets} of $\MZ$, which is not yet  available for our mean-field model.  If this can be established, along with the regularity properties of the quasipotential established in Section~\ref{section:v-properties}, one can not only use  the above  idea to extend our main result to the case when the limiting dynamical system possesses multiple $\omega$-limit sets but also to study  exit problems and metastability phenomena in our mean-field model.

Another definition of the quasipotential appears in the literature. It is given by the minimisation of costs  of the form $S_{(-\infty, 0]}(\varphi)$
over infinite-horizon trajectories $\varphi$ on $(-\infty, 0]$ such that the terminal time condition $\varphi(0)$ is fixed and $\varphi(t) \to \xi^*$ as $t \to -\infty$ (see Sowers~\cite{sowers-90}, Cerrai and R\"ockner~\cite{cerrai-rockner-05}). While it is clear that the above definition of the quasipotential is a lower bound for $V$ in~\eqref{eqn:v}, unlike in  Sowers~\cite{sowers-90} and  Cerrai and R\"ockner~\cite{cerrai-rockner-05}, we are not able to show that the two definitions are the same. A proof of this equality, or otherwise, will add more insight on the general case.

We remark that assumption~\ref{assm:a3} does not play a role in the proof of our main result. It is used to invoke the process-level LDP for $\mu^N_{\nu_N}$  (see Theorem~\ref{thm:uniform-ldp-mun}) and the well-posedness of the limiting dynamical system~\eqref{eqn:mve}. If these two properties are established through some other means then the proof of Theorem~\ref{thm:main-result} holds verbatim without the need for assumption~\ref{assm:a3}.

Finally, we mention that a time-independent variational formula for the quasipotential is available for some non-reversible models in statistical mechanics, see Bertini et al.~\cite{bertini-etal-02,bertini-etal-03}. It is not clear if the quasipotential $V$ in~\eqref{eqn:v} admits a time-independent variational form. This would be an interesting direction to explore.

\subsection{Related literature}
Process-level large deviations of small-noise diffusion processes have been well studied in the past. For finite-dimensional large deviation problems, see Freidlin and Wentzell~\cite[Chapter~5]{freidlin-wentzell}, Liptser~\cite{liptser-96}, Veretennikov~\cite{veretennikov-00}, Puhalskii~\cite{puhalskii-16}, and the references therein. For infinite-dimensional problems where the state space is not locally compact, see Sowers~\cite{sowers-92} and  Cerrai and R\"ockner~\cite{cerrai-rockner-04}. More recently, uniform large deviation principle (uniform LDP) for Banach-space valued stochastic differential equations over the class of bounded and open subsets of the Banach space have been studied by Salins~et~al.~\cite{salins-etal-19}. These have been used to study the exit times and metastability in such processes, see Salins and Spiliopoulos~\cite{salins-spiliopoulos-19}. While the above works focus on diffusion processes, our work focuses on the stationary regime large deviations of  countable-state mean-field models with jumps. In the spirit of the small-noise problems listed above, our process $\mu^N_{\nu_N}$ can be viewed as  a small random perturbation of the dynamical system~\eqref{eqn:mve} on $\MZ$.

In the context of interacting particle systems, Dawson and G\"artner~\cite{dawson-gartner-87} established the process-level LDP for weakly interacting diffusion processes, and L\'eonard~\cite{leonard-95} and Borkar and Sundaresan~\cite{borkar-sundaresan-12} extended this to mean-field interacting particle systems with jumps. In this work, we focus on the stationary regime large deviations of mean-field models with jumps when the state of each particle comes from a countable set. For small-noise diffusion process on Euclidean spaces and finite-state mean-field models, since the state space (on which the empirical measure process evolves) is locally compact, the process-level large deviation results have been  extended in a straightforward manner to the uniform LDP over the class of open subsets of the space. Such uniform large deviation estimates have been used to prove the large deviations of the invariant measure and the exit time estimates, see Freidlin and Wentzell~\cite[Chapter~6]{freidlin-wentzell} in the context of diffusion processes, Borkar and Sundaresan~\cite{borkar-sundaresan-12} and~\cite{mypaper-1} in the context of finite-state mean-field models. One of the key ingredients in these proofs is the continuity of the  quasipotential. However in our case, the state space $\MZ$ is infinite-dimensional and not locally compact. Therefore, since the quasipotential \eqref{eqn:v} is expected to have compact lower level sets, we do not expect it to be continuous on $\MZ$ unlike in the finite-dimensional problems mentioned above. Hence the ideas presented in~\cite{borkar-sundaresan-12} are not directly applicable to our context of the LDP for the family of invariant measures. 

Large deviations of the family of invariant measures for small-noise diffusion processes on non-locally compact spaces have also been studied in the past, see Sowers~\cite{sowers-90} and Cerrai and R\"ockner~\cite{cerrai-rockner-05}. They have a unique attractor for the limiting dynamics, and the proof essentially involves conversion of the uniform LDP over the finite-time horizon to the stationary regime. Martirosyan~\cite{martirosyan-17} studied a situation where the limiting dynamical system possesses multiple attractors. For the study of large deviations of the family of invariant measures for simple exclusion processes, see Bodineau and Giacomin~\cite{bodineau-giacomin-04} and Bertini et al.~\cite{bertini-etal-03}. More recently, Farf\'an et al.~\cite{farfan-etal-18} extended this to a simple exclusion process whose limiting hydrodynamic equation has multiple attractors. Their proof proceeds similar to the case of finite-dimensional diffusions in Freidlin and Wentzell~\cite[Chapter~6,~Section~4]{freidlin-wentzell} by first approximating the process near the attractors and then using the Khasminskii reconstruction formula~\cite[Chapter~4,~Section~4]{khasminskii}. In particular, it requires the uniform LDP to hold over open subsets of the state space. Since their state space, although infinite-dimensional, is compact, the proof of the uniform LDP over open subsets easily follows  from the process-level LDP. Also, the compactness of the state space simplifies the proofs of the {\em small cost connection} property from the attractors to nearby points, a property needed in the Khasminskii reconstruction. Although we restrict our attention to the case of a unique globally asymptotically stable equilibrium as in~\cite{sowers-90,cerrai-rockner-05}, the main novelty of our work is that we establish certain  regularity properties of the quasipotential for countable-state mean-field models with jumps  which were not done in the past. We then use these properties to prove the LDP for the family of invariant measures. Furthermore, we demonstrate two counterexamples where the stationary regime LDP's rate functions are not governed by the usual quasipotential. To the best of our knowledge, such examples where the LDP for the family of invariant measures hold but there rate functions are not governed by  the usual Freidlin-Wentzell quasipotential are new. These examples are constructed in a way that the particle systems do not possess the {\em small cost connection} property from the attractor to nearby points with finite first moment but infinite $\vartheta$-moment.

Large deviations of the family of invariant measure for a queueing network in a finite-dimensional setting has been studied by Puhalskii~\cite{puhalskii-19}. Finally, large deviations of the family of invariant measures for  a stochastic process  under some general conditions has been studied by Puhalskii~\cite{puhalskii-20}. One of their conditions is the small cost connection property between any two nearby points in the state space, which we do not expect to be true in our countable-state mean-field model since our state space is infinite-dimensional.
\subsection{Organisation}
This paper is organised as follows. In Section~\ref{section:prelims-results}, we provide preliminary results on the large deviations over finite time horizons. The proof of the main result is carried out in Sections~\ref{section:exp-tightness}--\ref{section:proof-completion}. In Section~\ref{section:exp-tightness}, we prove the existence, uniqueness, and exponential tightness of the family of invariant measures. In Section~\ref{section:ldp-lower-bound}, we prove the LDP lower bound for the family of invariant measures. In Section~\ref{section:v-properties}, we establish some regularity properties of the quasipotential $V$ defined in~\eqref{eqn:v}. In Section~\ref{section:ldp-upper-bound}, we prove the LDP upper bound for the family of invariant measures. In Section~\ref{section:proof-completion}, we complete the proof of the main result. Finally in Section~\ref{section:counterexamples}, we prove that the quasipotential differs from the relative entropy (with respect to the globally asymptotically stable equilibrium) for the two counterexamples discussed in Section~\ref{subsection:counterexamples-introduction}.
\section{Preliminaries}
\label{section:prelims-results}
\subsection{Frequently used notation}
\label{subsection:notation}
We first summarise the frequently used notation in the paper. Let $\Z$ denote the set of nonnegative integers and let $(\Z, \E)$ denote a directed graph on $\Z$. Let $\R^\infty$ denote the infinite product of $\R$ equipped with the topology of pointwise convergence. Let $\CBZ$ denote the space of functions on $\Z$ with compact support. Recall that $\MZ$ denotes the space of probability measure on $\Z$ equipped with the total variation metric (denoted by $d$). This metric generates the topology of weak convergence on $\MZ$. By Scheff\'e's lemma~\cite[Chapter~3,~Section~2]{durrett-13}, $\MZ$ can be  identified with the subset $\{x \in \R^\infty: x_i \geq 0\, \forall i, \sum_{i \geq 0} x_i = 1\}$ of $\R^\infty$ with the subspace topology. For each $N \geq 1$, recall that $\MNZ \subset \MZ$ denotes the space of probability measures on $\Z$ that can arise as empirical measures of $N$-particle configurations on $\Z^N$.  Recall $\vartheta$ defined in \eqref{eqn:theta}. Given $\alpha \in \CBZ$ and $g \in \R^\infty$, let the bracket $\langle \alpha, g \rangle$ denote $\sum_{z \in \Z} \alpha(z) g(z)$. Similarly, given $f, g \in\R^\infty$, let the bracket $\langle f, g \rangle$ denote $\lim_{n \to \infty}\sum_{k=0}^n f(k) g(k)$, whenever the limit exists. For $M > 0$, define  
\begin{align}
\K_M \coloneqq \left\{\xi \in \MZ: \langle \xi,  \vartheta \rangle \leq M\right\};
\label{eqn:KM}
\end{align}
by Prohorov's theorem, $\K_M$ is a compact subset of $\MZ$. Define $ \K \coloneqq \bigcup_{M \geq 1} \K_M$. Let $\xi^*  \in \MZ$ denote the globally asymptotically stable equilibrium for the McKean-Vlasov equation~\eqref{eqn:mve} (see assumption~\ref{assm:gase}). For each $\Delta > 0$, define
\begin{align}
K(\Delta) \coloneqq \{\xi \in \K : d(\xi^*, \xi) \leq \Delta \text{ and } |\langle  \xi^* , \vartheta \rangle - \langle \xi, \vartheta \rangle | \leq \Delta\};
\label{eqn:KD}
\end{align}
note that $K(\Delta)$ depends on $\xi^*$ as well (which we do not indicate for ease of readability). Define
\begin{align}
\tau(u) \coloneqq e^u-u-1,\, u \in \R.
\label{eqn:tau}
\end{align}
Note that $\tau$ is the log-moment generating function of the centred unit rate Poisson law, and define its convex dual
\begin{align}
\tau^*(u) \coloneqq \left\{
\begin{array}{ll}
\infty & \text{ if } u < -1, \\
1 & \text{ if } u = -1, \\
(u+1) \log (u+1) - u & \text{ if } u > -1,
\end{array}
\right.
\label{eqn:taustar}
\end{align}

For a complete and separable metric space $(\Space, d_0)$, $A \subset \Space$, and $x \in \Space$, let $d_0(x,A)$ denote $\inf_{y \in A} d_0(x,y)$.  For a set $A$ let $\sim \hspace{-0.4em} A$ denote the complement of $A$. For two numbers $a$ and $b$, let $a \vee b$ (resp. $a \wedge b$) denote maximum (resp. minimum)  of $a$ and $b$. Also, let $a^+ = \max\{a, 0\}$. For a metric space $\Space$, let $\B(\Space)$ denote the Borel $\sigma$-field on $\Space$. Finally, constants are denoted by $C$ and their values may be different in each occurrence.

\subsubsection{Notation related to the dynamics}
\label{subsection:dynamics-notation}
Let  $D([0,T], \Space)$ denote the space of $\Space$-valued functions on $[0,T]$ that are right continuous with left limits. It is equipped with the Skorohod topology which makes it a complete and separable metric space (see, for example, Ethier and Kurtz~\cite[Chapter~3]{ethier-kurtz}). Let $\rho$ denote a metric  on $D([0,T], \Space)$ that generates the Skorohod topology. An element of $D([0,T], \Space)$ is called a ``trajectory", and we shall refer to the process-level large deviations rate function evaluated on a trajectory as the ``cost" associated with that trajectory. For a trajectory $\varphi$, let both $\varphi_t$ and $\varphi(t)$ denote the evaluation of $\varphi$ at time $t$. For $N \geq 1$ and $\nu_N \in \MNZ$, let $\PN_{\nu_N}$ denote the solution to the $D([0,T], \MNZ)$-valued martingale problem for $\LN$ with initial condition $\nu_N \in \MNZ$ (whenever the martingale problem for $\LN$ is well-posed). Let $\mu^N_{\nu_N}$ denote the random element of $D([0,T], \MNZ)$ whose law is $\PN_{\nu_N}$. For each  $\xi \in \MZ$, let $L_\xi$ denote the generator acting on functions $f$ on $\Z$ by 
\begin{align*}
f \mapsto L_\xi(z) \coloneqq \sum_{z^\prime: (z,z^\prime) \in \E} \lambda_{z,z^\prime}(\xi) (f(z^\prime) - f(z)),\, z \in \Z,
\end{align*} 
i.e., the  generator of the single particle evolving on $\Z$ under the static mean-field $\xi$.

Let $C_0^1([0,T] \times \Z)$ denote the space of real-valued functions on $[0,T] \times \Z$ with compact support that are continuously differentiable in the first argument. Given a trajectory $\varphi \in \DMZ$ such that the mapping $[0,T] \ni t \mapsto \varphi_t \in \MZ$ is  absolutely continuous (see Dawson and G\"artner~\cite[Section~4.1]{dawson-gartner-87}), one can define $\dot{\varphi}_t \in \R^\infty$ for almost all $t \in [0,T]$ such that 
\begin{align*}
\langle \varphi_t, f_t \rangle = \langle \varphi_0, f_0 \rangle  + \int_{[0,t]} \langle \dot{\varphi}_u, f_u \rangle du+ \int_{[0,t]} \langle \varphi_u, \partial_u f_u \rangle du
\end{align*}
holds for each $f \in C_0^1([0,T]\times \Z)$ and $t \in [0,T]$.

Finally, let $\M_1(\DZ)$  denote the space of probability measures on $\DZ$  equipped with the usual weak topology. Also, let $\M_1(\M_1(\DZ))$ denote the space of probability measures on $\M_1(\DZ)$ equipped with the weak topology.

\subsection{Process-level large deviations}
We first recall the definition of the large deviation principle for a family of random variables indexed by one parameter.
\begin{definition}[Large deviation principle]
\label{def:ldp} 
Let $(\Space, d_0)$ be a metric space. We say that a family $\{X^N, N \geq 1\}$ of $\Space$-valued random variables defined on a probability space $(\Omega, \mathcal{F}, P)$ satisfies the large deviation principle  with rate function $I : S \to [0,\infty]$ if
\begin{itemize}
\item (Compactness of level sets). For any $s\geq 0$, $\Phi(s) \coloneqq \{x \in \Space: I(x) \leq s \}$ is a compact subset of $\Space$;
\item (LDP lower bound). For any $\gamma > 0$, $\delta > 0$, and $x \in \Space$, there exists $N_0 \geq 1$ such that
\begin{align*}
P(d_0(X^N, x) < \delta) \geq \exp\{-N(I(x) + \gamma)\}
\end{align*}
for any $N \geq N_0$;
\item (LDP upper bound). For any $\gamma > 0$, $\delta > 0$, and  $s >0$, there exists $N_0 \geq 1$ such that
\begin{align*}
P(d_0(X^N, \Phi(s)) \geq \delta) \leq \exp\{-N(s - \gamma)\}
\end{align*}
for any $N \geq N_0$.
\end{itemize}
\end{definition}
\noindent This definition is also used to study the large deviations of a family of probability measures. For each $N \geq 1$, let $P^N = P \circ (X^N)^{-1}$, the law of the random variable $X_N$ on $(\Space, d_0)$. We say that the family of probability measures $\{P^N, N \geq 1\}$ satisfies the LDP on $(\Space, d_0)$ with rate function $I$ if the sequence of $\Space$-valued random variables $\{X^N, N \geq 1\}$ satisfies the LDP with rate function $I$.

The LDP lower bound in the above definition is equivalent to the following statement~\cite[Chapter~3,~Section~3]{freidlin-wentzell}
\begin{align*}
\liminf_{N \to \infty} \frac{1}{N} \log P(X^N \in G) \geq -\inf_{x \in G} I(x), \text{ for all } G \subset S \text{ open}.
\end{align*} 
Similarly, under the compactness of the level sets of the  rate function $I$, the LDP upper bound above is equivalent to the following statement:
\begin{align*}
\limsup_{N \to \infty} \frac{1}{N} \log P(X^N \in F) \leq -\inf_{x \in F} I(x), \text{ for all } F \subset S \text{ closed}.
\end{align*}

To study the LDP for the family of invariant measures, we require estimates on the  probabilities of the process-level large deviations of $\mu^N_{\nu_N}$. In particular, we consider hitting times of $\mu^N_{\nu_N}$ on certain subsets of the state space $\MZ$ and apply the  process-level large deviation lower and upper bounds for $\mu^N_{\nu_N}$ starting at these subsets. Therefore, in addition to the scaling parameter $N$, we must  consider the process $\mu^N_{\nu_N}$   indexed by the initial condition $\nu_N \in \MNZ$. To study the process-level large deviations of such stochastic processes indexed by two parameters, we use the following  definition of  the uniform large deviation principle (see Freidlin and Wentzell~\cite[Chapter~3,~Section~3]{freidlin-wentzell}).
\begin{definition}[Uniform large deviation principle] We say that the family $\{\mu^N_{\nu_N}, \nu_N \in \MNZ, N \geq 1\}$ of $D([0,T],\MZ)$-valued random variables  defined on a probability space $(\Omega, \mathcal{F}, P)$ satisfies the uniform large derivation principle over the class $\mathcal{A}$ of subsets of $\MNZ$ with the family of rate functions $\{I_\nu, \nu \in \MZ\}$, $I_\nu : D([0,T],\MZ) \to [0,+\infty]$, $\nu \in \MZ$, if
\begin{itemize}
\item (Compactness of level sets). For each $K \subset \MZ$ compact and $s \geq 0$, $\bigcup_{\nu \in K}\Phi_\nu(s)$ is a compact subset of $D([0,T],\MZ)$, where $\Phi_\nu(s) \coloneqq \{\varphi \in D([0,T],\MZ): \varphi(0) = \nu, I_\nu(\varphi) \leq s\}$;
\item (Uniform LDP lower bound). For any $\gamma > 0$, $\delta > 0$, $s >0$, and $A \in \mathcal{A}$, there exists $N_0 \geq 1$ such that
\begin{align*}
P(\rho(\mu^N_{\nu_N}, \varphi) < \delta) \geq \exp\{-N(I_{\nu_N}(\varphi)+\gamma)\},
\end{align*}
for all $\nu_N \in A \cap \MNZ$, $\varphi \in \Phi_{\nu_N}(s)$, and $N \geq N_0$;
\item (Uniform LDP upper bound). For any $\gamma > 0$, $\delta > 0$, $s_0 >0$, and $A \in \mathcal{A}$, there exists $N_0 \geq 1$ such that
\begin{align*}
P(\rho(\mu^N_{\nu_N}, \Phi_{\nu_N}(s))\geq \delta) \leq \exp\{-N(s-\gamma)\},
\end{align*}
for all $\nu_N \in A \cap \MNZ$, $s \leq s_0$, and $N \geq N_0$.
\end{itemize}
\label{def:uniform-ldp}
\end{definition}
\noindent Note that the initial conditions in the upper and lower bounds lie in $A \cap \MNZ$, unlike in the definition in \cite[Chapter 3, Section 3]{freidlin-wentzell}.

We now make some definitions. Recall $\tau$ defined in \eqref{eqn:tau}. For each $\nu \in \MZ$ and $T > 0$, define the functional $S_{[0,T]}(\cdot | \nu): \DMZ \to [0,\infty]$ by 
\begin{align}
S_{[0,T]}(\varphi|\nu) \coloneqq \int_{[0,T]} \sup_{ \alpha \in \CBZ} \biggr\{ \langle \alpha,\dot{\varphi}_t - \Lambda_{\varphi_t}^*\varphi_t \rangle- \sum_{(z,z^\prime) \in \mathcal{E}} \tau(\alpha(z^\prime) - \alpha(z)) \lambda_{z,z^\prime}(\varphi_t) \varphi_t(z) \biggr\} dt,
\label{eqn:rate-function-finite-duration}
\end{align}
whenever $\varphi(0) = \nu$ and the mapping $[0,T]\ni t \mapsto \varphi(t) \in \MZ$ is absolutely continuous;  $S_{[0,T]}(\varphi | \nu) = \infty$ otherwise. Define the lower level sets of the functional $S_{[0,T]}(\cdot | \nu)$ by
\begin{align*}
\Phi_\nu^{[0,T]}(s) \coloneqq \{\varphi \in \DMZ: \varphi(0) = \nu, S_{[0,T]}(\varphi|\nu) \leq s\},\, s > 0, \, \nu \in \MZ.
\end{align*}
The next lemma asserts that these level sets are compact in $\DMZ$ when the initial conditions belong to a compact subset of $\MZ$. The proof is deferred to Appendix~\ref{appendix:proofs}.
\begin{lemma}
\label{lemma:compactness-S}
For each $T > 0$, $s > 0$, and $K \subset \MZ$ compact,
\begin{align*}
\{\varphi \in \DMZ : \varphi(0) \in K, S_{[0,T]}(\varphi | \varphi(0)) \leq s \}
\end{align*} 
is a compact subset of $\DMZ$.
\end{lemma}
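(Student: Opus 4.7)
The plan is to prove compactness by establishing, separately, that the set
\[
C \coloneqq \{\varphi \in \DMZ : \varphi(0) \in K,\ S_{[0,T]}(\varphi|\varphi(0)) \leq s\}
\]
is closed and relatively compact in $\DMZ$.

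Closedness will follow from two observations. First, the evaluation map $\varphi \mapsto \varphi(0)$ is continuous on $\DMZ$ with the Skorohod topology (time-changes always fix the origin), so $\{\varphi : \varphi(0) \in K\}$ is closed. Second, the functional $\varphi \mapsto S_{[0,T]}(\varphi|\varphi(0))$ is lower semicontinuous on $\DMZ$. For each fixed $\alpha \in \CBZ$ and each $t \in [0,T]$, an integration-by-parts reduces $\int_0^t \langle \alpha, \dot{\varphi}_u \rangle du$ to boundary terms $\langle \alpha, \varphi_t \rangle - \langle \alpha, \varphi_0 \rangle$, so the integrand in~\eqref{eqn:rate-function-finite-duration} is a continuous functional of $\varphi$ via $\varphi_t$ (the continuity of $\lambda_{z,z^\prime}$ is used here). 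Taking the supremum over a countable dense subset of $\CBZ$ and applying Fatou yields lower semicontinuity of $S_{[0,T]}$, which gives closedness of the level set.

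Relative compactness will be obtained from a standard Arzel\`a--Ascoli criterion for $\DMZ$, which requires (i) a compact-containment condition: there exists a compact $\K_M \subset \MZ$ such that $\varphi(t) \in \K_M$ for all $t \in [0,T]$ and all $\varphi \in C$; and (ii) a uniform control on the Skorohod modulus of continuity $w'(\varphi,\delta) \to 0$ as $\delta \downarrow 0$, uniformly in $\varphi \in C$. For (i), I will exploit the variational formula by testing against carefully chosen truncations of $\vartheta$, namely $\alpha_n(z) = \vartheta(z) \mathbf{1}_{z \leq n}$, which lie in $\CBZ$. Under assumption~\ref{assm:a2}, the forward-edge terms contribute $\tau(\alpha_n(z+1)-\alpha_n(z))\, \overline{\lambda}/(z+1)\,\varphi_t(z)$, which is summable because $\vartheta(z+1)-\vartheta(z) \sim \log z$ and is absorbed by the $1/(z+1)$ decay; the backward-edge terms $\tau(-\alpha_n(z))\,\overline{\lambda}\,\varphi_t(z)$ are controlled by $\vartheta(z)\varphi_t(z)$ and may be moved to the left-hand side. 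Sending $n \to \infty$ by monotone convergence, together with $\sup_{\nu \in K}\langle \nu,\vartheta\rangle < \infty$ if the $\vartheta$-moment is finite on $K$ (one can reduce to this case using the tightness of $K$ and a separate starting-moment estimate), yields the uniform bound $\sup_{t \in [0,T]}\sup_{\varphi \in C}\langle \varphi(t), \vartheta\rangle \leq M(s,T,K)$, placing every $\varphi(t)$ in $\K_M$. For (ii), testing against bounded $\alpha$ and using the duality inequality $uv \leq \tau(u) + \tau^*(v)$ gives, for any $0 \leq u \leq v \leq T$, a bound of the form $|\langle \alpha, \varphi_v-\varphi_u\rangle| \leq (v-u)\|\alpha\|_\infty\,\overline{\lambda}\,C_\tau + $ a term controlled by $S_{[0,T]}(\varphi) \leq s$ and a slack parameter optimised in the end; together with the compact containment, this yields a uniform modulus of continuity in the total variation metric, in particular trajectories in $C$ are uniformly equicontinuous, so the Skorohod modulus $w'$ vanishes uniformly.

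The main obstacle I anticipate is step (i), namely the uniform $\vartheta$-moment bound, because $\CBZ$ only allows compactly supported test functions, $\tau$ is unbounded, and $\vartheta(z+1)-\vartheta(z)$ is itself unbounded in $z$. One must choose the truncation $\alpha_n$ and a multiplicative constant so that the forward term's contribution remains finite uniformly in $n$ and the backward term provides a coercive bound on $\langle \varphi_t, \vartheta\rangle$ that survives the limit $n \to \infty$. The decay $\lambda_{z,z+1}(\xi) \leq \overline{\lambda}/(z+1)$ from assumption~\ref{assm:a2} is precisely what makes this truncation argument work, and it is the step where the structural hypotheses of the paper are essential. Once this bound is obtained, the remaining verifications of equicontinuity and the invocation of Arzel\`a--Ascoli are routine.
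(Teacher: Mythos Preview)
Your architecture (closedness via lower semicontinuity, relative compactness via an Arzel\`a--Ascoli argument) and your closedness step match the paper's. For the modulus of continuity the paper takes a somewhat different route: rather than working from the variational formula with a Young-type inequality, it invokes the non-variational representation (Theorem~\ref{thm:weak-equation}) to produce the control $h_\varphi$, observes that $\int_{[0,T]}\sum_{(z,z')\in\E}\tau^*(h_\varphi)\lambda_{z,z'}(\varphi_u)\varphi_u(z)\,du\le s$ together with the total-intensity bound $\sum_{(z,z')}\lambda_{z,z'}\le 2\overline\lambda$ forces the family $\{1+h_\varphi:\varphi\in C\}$ to be uniformly integrable (de~la~Vall\'ee--Poussin), and then splits at a level $M$ to obtain $d(\varphi_t,\varphi_r)\le 2M\overline\lambda\,|t-r|+\varepsilon_M$ with $\varepsilon_M\to 0$ uniformly in $\varphi$. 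Your duality-plus-slack route can be pushed to the same conclusion once one passes through $h_\varphi$, so this is largely a difference of packaging; note, however, that if you try to localise to $[u,v]$ purely from the variational form~\eqref{eqn:rate-function-finite-duration} the integrand is nonnegative only \emph{after} the supremum, so the control $h_\varphi$ is what makes the restriction to $[u,v]$ clean.

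There is a real gap in your compact-containment step~(i). A compact $K\subset\MZ$ need \emph{not} satisfy $\sup_{\nu\in K}\langle\nu,\vartheta\rangle<\infty$: take $K=\{\delta_0\}\cup\{(1-\tfrac1n)\delta_0+\tfrac1n\delta_{m_n}:n\ge 1\}$ with $\vartheta(m_n)/n\to\infty$; this is a convergent sequence together with its limit, hence compact, but the $\vartheta$-moments are unbounded. Your $\vartheta$-truncation then yields only $\langle\vartheta,\varphi_t\rangle\le s+\langle\vartheta,\varphi_0\rangle+CT$ with an uncontrolled right-hand side, and the ``reduction via tightness of $K$'' does not repair this, since tightness bounds tails, not $\vartheta$-moments. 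A test that \emph{does} work is $\alpha=\beta\,\indf{\{n\le z\le m\}}$: under \ref{assm:a1}--\ref{assm:a2} the only positive $(e^{\alpha(z')-\alpha(z)}-1)$ contribution comes from the single forward edge $(n-1,n)$, and after $m\to\infty$ one gets
\[
\varphi_t([n,\infty))\ \le\ \varphi_0([n,\infty))+\frac{s}{\beta}+\frac{(e^\beta-1)\,\overline\lambda\,T}{n\beta}.
\]
Choosing $\beta=\tfrac12\log n$ makes the last two terms vanish as $n\to\infty$, and the first vanishes uniformly over $\varphi_0\in K$ by tightness of $K$. (The paper's own proof, incidentally, establishes only the modulus-of-continuity estimate and then invokes a Billingsley-type criterion without explicitly isolating the compact-containment condition.)
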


The starting point of our study of the invariant measure asymptotics is the following uniform large deviation principle for the family $\{\mu^N_{\nu_N}, \nu_N \in \MNZ, N \geq 1\}$ over the class of compact subsets of $\MZ$ with the family of rate functions $\{S_{[0,T]}(\cdot | \nu),\nu \in \MZ\}$. Its proof  uses the process-level LDP for $\mu^N_{\nu_N}$ studied in L\'eonard~\cite{leonard-95} for a fixed initial condition and its extension (when $\Z$ is a finite set) to the case when initial conditions converge to a point in $\MZ$ in Borkar and Sundaresan~\cite{borkar-sundaresan-12}. The proof can be found in Appendix~\ref{appendix:proofs}.
\begin{theorem}
\label{thm:uniform-ldp-mun}
Fix \, $T > 0$ and assume \ref{assm:a1}, \ref{assm:a2}, and \ref{assm:a3}. Then the family of $\DMZ$-valued random variables $\{\mu^N_{\nu_N}, \nu_N \in \MNZ, N \geq 1\}$ satisfies the uniform large deviation principle over the class of compact subsets of $\MZ$ with the family of rate functions $\{S_{[0,T]}(\cdot|\nu), \nu \in \MZ\}$.
\end{theorem}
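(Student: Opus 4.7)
The plan is to upgrade the pointwise process-level LDP with converging initial conditions---due to L\'eonard~\cite{leonard-95} for fixed $\nu$ and extended by Borkar--Sundaresan~\cite{borkar-sundaresan-12} to $\nu_N \to \nu$ in $\MZ$---to uniformity over compact subsets of $\MZ$. The compactness-of-level-sets requirement in Definition~\ref{def:uniform-ldp} is exactly Lemma~\ref{lemma:compactness-S}, so only the uniform lower and upper bounds need to be established. Both will proceed by contradiction, using Lemma~\ref{lemma:compactness-S} to extract convergent subsequences in $\DMZ$ and the joint lower semicontinuity of $(\varphi,\nu) \mapsto S_{[0,T]}(\varphi \mid \nu)$---a consequence of the variational form~\eqref{eqn:rate-function-finite-duration} and Fatou---to match up rate-function values across the limit.

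For the uniform lower bound, if it fails along $N_k \to \infty$, $\nu_k \in A \cap \MNZ$ with $A \subset \MZ$ compact, and $\varphi_k \in \Phi^{[0,T]}_{\nu_k}(s)$, then compactness of $A$ and Lemma~\ref{lemma:compactness-S} yield a subsequence with $\nu_k \to \nu \in A$ and $\varphi_k \to \varphi$ in $\DMZ$, with $\varphi(0)=\nu$ and $S_{[0,T]}(\varphi\mid\nu) \leq \liminf_k S_{[0,T]}(\varphi_k\mid\nu_k)$. Since eventually $\rho(\varphi_k,\varphi)<\delta/2$, we have $\{\rho(\mu^{N_k}_{\nu_k},\varphi)<\delta/2\} \subseteq \{\rho(\mu^{N_k}_{\nu_k},\varphi_k)<\delta\}$, and the pointwise lower bound at $\varphi$ with $\nu_k \to \nu$ yields $\liminf_k \tfrac{1}{N_k}\log \PN_{\nu_k}(\rho(\mu^{N_k}_{\nu_k},\varphi_k)<\delta) \geq -S_{[0,T]}(\varphi\mid\nu)$, which contradicts the bound $-\limsup_k S_{[0,T]}(\varphi_k\mid\nu_k) - \gamma$ forced by the assumed failure.

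For the uniform upper bound, if it fails along $\nu_k \to \nu \in A$ and $s_k \to s^* \leq s_0$, I would establish the inclusion
\[
\{\rho(\mu^{N_k}_{\nu_k}, \Phi^{[0,T]}_{\nu_k}(s_k)) \geq \delta\} \subseteq \{\rho(\mu^{N_k}_{\nu_k}, \Phi^{[0,T]}_\nu(s^*-\gamma/2)) \geq \delta/2\}
\]
for all large $k$, after which the pointwise upper bound at the fixed initial condition $\nu$ applied to the closed set on the right gives $\limsup_k \tfrac{1}{N_k}\log \PN_{\nu_k}(\cdots) \leq -(s^*-\gamma/2) < -(s^*-\gamma)$, contradicting the assumed failure. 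The inclusion reduces to a small-cost connection: for every $\psi \in \Phi^{[0,T]}_\nu(s^*-\gamma/2)$, construct $\psi_k \in \Phi^{[0,T]}_{\nu_k}(s_k)$ with $\psi_k \to \psi$ in $\DMZ$. I would build $\psi_k$ by prepending the McKean--Vlasov flow from $\nu_k$ over a vanishing time window and then gluing a time-reparameterised copy of $\psi$; under \ref{assm:a2}--\ref{assm:a3} the prepended segment accumulates vanishing cost as $k \to \infty$.

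The main obstacle is the small-cost connection used for the upper bound. Because $\MZ$ is not locally compact, straight-line interpolation in $\MZ$ is unavailable and mass must be transferred along edges of $\E$; the specific transition graph in \ref{assm:a1} (forward moves and resets to $0$) together with the Lipschitz bounds \ref{assm:a2}--\ref{assm:a3} make such a connection feasible with cost tending to zero as $\nu_k \to \nu$. The joint lower semicontinuity needed for the lower bound requires some care because the supremum in~\eqref{eqn:rate-function-finite-duration} ranges over the unbounded family $\CBZ$, but reduces to Fatou applied to finite-support truncations together with the continuity of $\xi \mapsto \Lambda_\xi$ from~\ref{assm:a3}.
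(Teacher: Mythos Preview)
The paper's own proof is a two–line citation: it quotes the process-level LDP for converging initial conditions (L\'eonard, Borkar--Sundaresan), invokes Lemma~\ref{lemma:compactness-S} for the compactness of $\bigcup_{\nu\in K}\Phi^{[0,T]}_\nu(s)$, and then appeals to Budhiraja--Dupuis, Propositions~1.12 and~1.14, for the abstract implication ``LDP along every convergent sequence of initial conditions $+$ compact level sets over compacts $\Rightarrow$ Freidlin--Wentzell uniform LDP over compacts''. You instead try to reprove that abstract implication directly; your lower-bound argument is correct and is essentially what the cited propositions contain on that side.

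Your upper-bound argument, however, has a genuine gap. The path $\psi_k$ you describe---McKean--Vlasov flow from $\nu_k$ on $[0,\epsilon_k]$ followed by a time-reparametrised copy of $\psi$---is not absolutely continuous: the McKean--Vlasov flow from $\nu_k$ over a vanishing window terminates near $\nu_k$, not at $\psi(0)=\nu$, so the glued path has a jump at the interface and $S_{[0,T]}(\psi_k\mid\nu_k)=\infty$. Prepending a zero-cost segment does nothing to bridge $\nu_k$ and $\nu$. A genuine bridge must move mass along the edges of $\E$ from the configuration $\nu_k$ to the configuration $\nu$; that is precisely the construction carried out later in Lemma~\ref{lemma:v-continuity}, but that lemma requires $\langle \nu_k,\vartheta\rangle \to \langle \nu,\vartheta\rangle$, which is \emph{not} guaranteed merely by $\nu_k\to\nu$ inside a compact subset of $\MZ$ (compact subsets of $\MZ$ can have unbounded $\vartheta$-moment). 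So the ``main obstacle'' you flag is real, and your proposed resolution does not work; the path-building machinery of Section~\ref{section:v-properties} is not strong enough to supply the connection you need at this level of generality. The paper sidesteps the whole issue by invoking the Budhiraja--Dupuis equivalence (which passes through the uniform Laplace principle rather than through any pathwise small-cost connection), and that is the route you should take here as well.
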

\noindent The rate function $S_{[0,T]}(\cdot | \nu)$ admits a non-variational representation in terms of a minimal cost ``control" that modulates the transition rates across various edges in $\E$ so that the desired trajectory is obtained.  Recall $\tau^*$ defined in \eqref{eqn:taustar}. 
\begin{theorem}[Non-variational representation; L\'eonard~\cite{leonard-95-1}]
Let $\varphi \in \DMZ$ be such that $S_{[0,T]}(\varphi|\varphi(0)) < \infty$. Then there exists a measurable function $h_\varphi : [0,T] \times \E \to \R$    such that 
\begin{align}
\langle \varphi_t, f_t \rangle & = \langle \varphi_0, f_0 \rangle + \int_{[0,t]} \langle  \varphi_u, \partial_u f_u \rangle du \nonumber \\
& \qquad + \int_{[0,t]} \sum_{(z,z^\prime)\in \E} (f_u(z^\prime) - f_u(z)) (1+h_\varphi(u,z,z^\prime)) \lambda_{z,z^\prime}(\varphi_u) \varphi_u(z) du
\label{eqn:weak-equation}
\end{align}
holds for all $t \in [0,T]$ and all $f \in C_0^1([0,T] \times \Z)$, and $S_{[0,T]}(\varphi|\varphi(0))$ admits the non-variational representation
\begin{align}
S_{[0,T]}(\varphi|\varphi(0)) = \int_{[0,T]} \sum_{(z,z^\prime)\in \E} \tau^*(h_\varphi(t,z,z^\prime)) \lambda_{z,z^\prime}(\varphi_t) \varphi_t(z) dt.
\label{eqn:rate-function-nonvar}
\end{align}
\label{thm:weak-equation}
\end{theorem}

\begin{remark}
It can be shown that the rate function $S_{[0,T]}$ defined in~\eqref{eqn:rate-function-finite-duration} can also be expressed as
\begin{align}
S_{[0,T]}(\varphi|\nu) & = \sup_{f \in C_0^1([0,T]\times\Z)} \Biggr\{\langle \varphi_T , f_T \rangle  - \langle \varphi_0 , f_0 \rangle - \int_{[0,T]}\langle \varphi_u , \partial_u f_u \rangle du \nonumber \\
& \qquad  - \int_{[0,T]}\langle \varphi_u, L_{\varphi_u} f_u \rangle du - \int_{[0,T]} \sum_{(z,z^\prime) \in \E}\tau(f_u(z^\prime) - f_u(z)) \lambda_{z,z^\prime}(\varphi_u) \varphi_u(z) du \Biggr\},
\label{eqn:rate-function-finite-duration-counter-example}
\end{align}
$\varphi \in \DMZ$, see L\'eonard~\cite{leonard-95-1}. This form of the rate function will indeed be used in the proof of the counterexamples in Section~\ref{section:counterexamples}.
\end{remark}

\section{Invariant measure: Existence, uniqueness, and exponential tightness}
\label{section:exp-tightness}
In this section we prove Proposition~\ref{prop:invariant-measure-exp-tightness}, the existence and uniqueness of the invariant measure $\wp^N$ for $\LN$ for each $N \geq 1$, and the exponential tightness of the family of invariant measures $\{\wp^N, N \geq 1\}$. The proof relies on the standard Krylov-Bogolyubov argument and a coupling between the interacting particle system under consideration and a non-interacting system with maximal forward transition rates minimal backward transition rates.

We first introduce some notations for the non-interacting particle system. Let $\bar{L}$ denote the generator acting on functions $f$ on $\Z$ by
\begin{align}
\bar{L}f(z) = \sum_{z^\prime: (z,z^\prime) \in \E} \lambda_{z,z^\prime} (f(z^\prime) - f(z)), \, z \in \Z,
\label{eqn:Lbar}
\end{align}
where $\lambda_{z,z+1} = \overline{\lambda}/(z+1)$ and $\lambda_{z,0} = \underline{\lambda}$. For each $z \in \Z$, let $\bar{P}_z$ denote the solution to the $D([0,T], \Z)$-valued martingale problem for $\bar{L}$ with initial condition $z$. Integration with respect to $\bar{P}_z$ is denoted by $\bar{E}_z$. Let $\pi \in \MZ$ denote the unique  invariant probability measure for $\bar{L}$. Let $\bar{P}_\pi$ denote the solution to the martingale problem for $\bar{L}$ with initial law $\pi$. Integration with respect to $\bar{P}_\pi$ is denoted by $\bar{E}_\pi$. By solving the detailed balance equations for $\bar{L}$, we see that 
\begin{align*}
\pi(z) \leq \pi(0) \left(\frac{\overline{\lambda}}{\underline{\lambda}}\right)^z \prod_{k=1}^z \frac{1}{k}, \, \,  z \geq 1.
\end{align*}
In particular, $\pi(z)$ has superexponential decay in $z$, and $\bar{E}_{\pi}(\exp\{\beta \vartheta(X)\}) < \infty$ for small enough $\beta>0$, where $\vartheta$ is defined in \eqref{eqn:theta}. Finally, for each $N \geq 1$, let $\PbarN_{\nu_N}$ denote the solution to the $D([0,T], \MNZ)$-valued martingale problem for $\LN$ with initial condition $\nu_N \in \MNZ$,  $\lambda_{z,z+1}(\zeta)$ replaced by $\overline{\lambda}/(z+1)$ and $\lambda_{z,0}(\zeta)$ replaced by $\underline{\lambda}$ in~\eqref{eqn:LN}, respectively, for each $\zeta \in \MZ$. Integration with respect to $\PbarN_{\nu_N}$ is denoted by $\EbarN_{\nu_N}$. Also, recall $\PN_{\nu_N}$, $\nu_N \in \MNZ$,  from Section \ref{subsection:dynamics-notation}. We are now ready to prove Proposition~\ref{prop:invariant-measure-exp-tightness}.
\begin{proof}[Proof of Proposition~\ref{prop:invariant-measure-exp-tightness}]
Fix $N \geq 1$. We first show the existence and uniqueness of the invariant probability measure for $\LN$. Consider the family of probability measures $\{\eta^N_T, T \geq 1\}$ on $\MZ$ defined by
\begin{align*}
\eta^N_T(A) \coloneqq \frac{1}{T}\int_0^T \PN_{\delta_0}(\mu^N(t) \in A) dt, \, A \in \B(\MZ), \, T \geq 1.
\end{align*}
Let $X_n^N(t)$ denote the state of the $n$th particle at time $t$. Recall the compact sets $\K_M$, $M > 0$,  defined in \eqref{eqn:KM}.
We first couple the laws $\PN_{\delta_0}$ and $\PbarN_{\delta_0}$. For $\mathbf{z}^N \in \Z^N$, define  $\emp(\mathbf{z}^N):= \frac{1}{N} \sum_{n=1}^N\delta_{z_n^N} \in \MNZ$. Let $\mathbf{e}_n^N$ denote the $N$-length vector with a $1$ in the $n$th position and $0$ everywhere else. Consider the Markov process on $\Z^N  \times \Z^N$ with the infinitesimal generator acting on functions $f$ on $\Z^N \times \Z^N$ by
\begin{align*}
(\mathbf{z}^N, \bar{\mathbf{z}}^N) \mapsto &  \sum_{n=1}^N \biggr[  \left(f(\mathbf{z}^N + \mathbf{e}_n^N, \bar{\mathbf{z}}^N+\mathbf{e}_n^N) - f(\mathbf{z}^N, \bar{\mathbf{z}}^N)\right) \left( \lambda_{z_n^N, z_n^N+1}(\emp(\mathbf{z}^N)) \wedge   \frac{\overline{\lambda}}{\bar{z}_n^N + 1} \right) \\
& \qquad + \left(f(\mathbf{z}^N + \mathbf{e}_n^N, \bar{\mathbf{z}}^N) - f(\mathbf{z}^N, \bar{\mathbf{z}}^N)\right) \left( \lambda_{z_n^N, z_n^N+1}(\emp(\mathbf{z}^N)) -    \frac{\overline{\lambda}}{\bar{z}_n^N + 1} \right)^+ \\
& \qquad  + \left(f(\mathbf{z}^N, \bar{\mathbf{z}}^N+\mathbf{e}_n^N) - f(\mathbf{z}^N, \bar{\mathbf{z}}^N)\right) \left( \frac{\overline{\lambda}}{\bar{z}_n^N + 1} - \lambda_{z_n^N, z_n^N+1}(\emp(\mathbf{z}^N)) \right)^+ \\
& \qquad + \left(f(\mathbf{z}^N - z^N_n \mathbf{e}_n^N, \bar{\mathbf{z}}^N - \bar{z}_n^N \mathbf{e}_n^N) - f(\mathbf{z}^N, \bar{\mathbf{z}}^N)\right) \left( \lambda_{z_n^N, 0}(\emp(\mathbf{z}^N)) \wedge   \underline{\lambda} \right) \indf{\{z_n^N > 0, \bar{z}_n^N > 0\}} \\
& \qquad + \left(f(\mathbf{z}^N - z^N_n \mathbf{e}_n^N, \bar{\mathbf{z}}^N) - f(\mathbf{z}^N, \bar{\mathbf{z}}^N)\right)  \left( \lambda_{z_n^N, 0}(\emp(\mathbf{z}^N)) -    \underline{\lambda} \right)^+ \indf{\{z_n^N > 0\}} \\
& \qquad  + \left(f(\mathbf{z}^N, \bar{\mathbf{z}}^N - \bar{z}^N_n\mathbf{e}_n^N) - f(\mathbf{z}^N, \bar{\mathbf{z}}^N)\right) \left( \underline{\lambda} - \lambda_{z_n^N,0}(\emp(\mathbf{z}^N)) \right)^+ \indf{\{\bar{z}_n^N > 0\}} \biggr].
\end{align*}
Such couplings were studied for continuous-time Markov chains, see, e.g., \cite{mufa1994optimal}. Note that, under the above Markov process, for any two initial conditions $\nu_N, \bar{\nu}_N \in \MNZ$,  the empirical measure flow associated with the first (resp. second) marginal has law $\PN_{\nu_N}$ (resp. $\PbarN_{\bar{\nu}_N}$). Therefore, for any $t >1$, $M > 1$, and $\beta > 0$, we have 
\begin{align}
\PN_{\delta_0}(\mu^N(t) \notin \K_M) & \leq \PbarN_{\delta_0}(\mu^N(t) \notin \K_M) \nonumber \\
& = \PbarN_{\delta_0} \left(\sum_{n=1}^N \vartheta(X_n^N(t)) > NM\right)\nonumber \\
& \leq \exp\{-NM\beta\}\EbarN_{\delta_0}\left(\exp\left\{\beta \sum_{n=1}^N \vartheta(X_n^N(t)) \right\}\right) \nonumber \\
& = \exp\{-NM\beta\} (\bar{E}_0(\exp\{\beta \vartheta(X_1^N(t))\}))^N,
\label{eqn:exp-tight-series}
\end{align}
where the first inequality follows from the above coupling since (i) the $n$th particle under $\PbarN_{\delta_0}$ moves from $z$ to $z+1$ whenever it does so under $\PN_{\delta_0}$, and (ii) the $n$th particle under $\PN_{\delta_0}$ moves to $0$ (i.e., a $z$ to $0$ transition for some $z$) whenever it does so under $\PbarN_{\delta_0}$. The second inequality in~\eqref{eqn:exp-tight-series} is a consequence of Chebyshev's inequality. Recall $\pi$, and the laws $\bar{P}_\pi$ and $\bar{P}_0$.  We couple the laws $\bar{P}_\pi$ and $\bar{P}_0$. Consider the Markov process  on $\Z \times \Z$ with the infinitesimal generator acting on functions $f$ on $\Z \times \Z$ by 
\begin{align*}
(\bar{z}_1, \bar{z}_2) & \mapsto \left(f(\bar{z}_1+1, \bar{z}_2+1) - f(\bar{z}_1, \bar{z}_2)\right) \left(\frac{\overline{\lambda}}{\bar{z}_1+1} \wedge \frac{\overline{\lambda}}{\bar{z}_2+1}\right) \\
& \qquad + \left(f(\bar{z}_1+1, \bar{z}_2) - f(\bar{z}_1, \bar{z}_2)\right) \left(\frac{\overline{\lambda}}{\bar{z}_1+1} - \frac{\overline{\lambda}}{\bar{z}_2+1}\right)^+ \\ 
& \qquad + \left(f(\bar{z}_1, \bar{z}_2+1) - f(\bar{z}_1, \bar{z}_2)\right) \left( \frac{\overline{\lambda}}{\bar{z}_2+1} - \frac{\overline{\lambda}}{\bar{z}_1+1}\right)^+ \\ 
& \qquad + \left(f(0,0) - f(\bar{z}_1, \bar{z}_2)\right) \underline{\lambda}\indf{\{\bar{z}_1 > 0, \bar{z}_2 > 0\}} \\
& \qquad + \left(f(0, \bar{z}_2) - f(\bar{z}_1, \bar{z}_2)\right) \underline{\lambda} \indf{\{\bar{z}_1 > 0, \bar{z}_2 = 0\}}\\
& \qquad + \left(f(\bar{z}_1, 0) - f(\bar{z}_1, \bar{z}_2)\right) \underline{\lambda} \indf{\{\bar{z}_1 = 0, \bar{z}_2 > 0\}}.
\end{align*}
Note that, when the initial condition has law $(\pi, \delta_0)$,  the first (resp. second) component under the above process has law $\bar{P}_\pi$ (resp. $\bar{P}_0$). Also, note that if $\bar{X}_1(0) \geq \bar{X}_2(0)$ then   $\bar{X}_1(s) \geq \bar{X}_2(s)$ for all $s$  under the above coupling. Since the first component is at least the second component under the initial law $(\pi, \delta_0)$, it follows that $\bar{E}_0(\exp\{\beta \vartheta(X_1^N(t))\}) \leq \bar{E}_\pi(\exp\{\beta \vartheta(X_1^N(t))\})$. The latter is finite for sufficiently small $\beta > 0$, thanks to the $\exp\{-\vartheta(z)\}$ decay of the probability measure $\pi$ on $\Z$. Thus we can choose $\bar{\beta} > 0$ small enough (independent of $M$) so that $\log \bar{E}_\pi(\exp\{\bar{\beta}\vartheta(X_1^N(t))\}) < 1$. Hence~\eqref{eqn:exp-tight-series} implies that
\begin{align*}
\PN_{\delta_0} (\mu^N(t) \notin \K_M)& \leq \exp\{-N(M\bar{\beta}-1)\}.
\end{align*}
Therefore, for any $M > 0$ and $T \geq 1$, we get
\begin{align}
\eta^N_T(\sim \hspace{-0.4em} \K_M) \leq \exp\{-N(M\bar{\beta}-1)\}.
\label{eqn:etaT-exp-estimate}
\end{align}
Since $\K_M$ is a compact subset of $\MZ$, this show that the family $\{\eta^N_T, T \geq 1\}$ is tight. Hence it follows that there exists an invariant probability measure $\wp^N$ for $\LN$ (see, for example,~Ethier and Kurtz~\cite[Theorem~9.3,~page~240]{ethier-kurtz}). By Assumption~\ref{assm:a1}, $\mu^N$ is an irreducible Markov process; hence $\wp^N$ is the unique invariant probability measure for $\LN$.

We now show the exponential tightness of the family $\{\wp^N, N \geq 1\}$. Let $M > 0$ be given, and choose $M^\prime = (M+1)/\bar{\beta}$.  For each $N \geq 1$, since $\wp^N$ is a weak limit of the family $\{\eta^N_T, T \geq 1\}$ as $T \to \infty$, from~(\ref{eqn:etaT-exp-estimate}) with $M$ replaced by $M^\prime$, it follows that
\begin{align}
\wp^N(\sim \hspace{-0.4em} \K_{M^\prime}) \leq  \liminf_{T \to \infty} \eta^N_T(\sim \hspace{-0.4em} \K_{M^\prime}) \leq \exp\{-NM\}.
\label{eqn:exp-tightness-mprime-choice}
\end{align}
for each $N \geq 1$. Hence,
\begin{align*}
\limsup_{N \to \infty} \frac{1}{N} \log \wp^N(\sim \hspace{-0.4em} \K_{M^\prime}) \leq -M,
\end{align*}
which establishes that the family $\{\wp^N, N \geq 1\}$ is exponential tight. This completes the proof of the proposition.
\end{proof}

\section{The LDP lower bound}
\label{section:ldp-lower-bound}
In this section we prove the LDP lower bound for the family $\{\wp^N, N\geq 1\}$. To lower bound the probability of a small neighbourhood of a point $\xi$ under $\wp^N$, we first produce  a trajectory that starts at $\K_M$ for a suitable $M > 0$, connects to $\xi^*$ with a small cost, and then reaches $\xi$ from $\xi^*$ with cost arbitrarily close to $V(\xi)$, where $V$ is the quasipotential defined in \eqref{eqn:v}. The probability of a small neighbourhood of  $\xi$ under $\wp^N$ is then lower bounded by the probability that the process $\mu^N$ remains in a small neighbourhood of the trajectory constructed above. The latter is then lower bounded using the  uniform LDP lower bound for $\mu^N$, where the uniformity is over the initial condition  lying in a given compact subset of $\MZ$.

Recall $K(\Delta)$ defined in \eqref{eqn:KD}. We begin with a lemma that allows us to connect points in $K(\Delta)$ to $\xi^*$ for small enough $\Delta$ with small cost. We omit its proof here, since it follows from a certain continuity property of $V$ which will be shown in Lemma~\ref{lemma:v-continuity}.
\begin{lemma}Given $\gamma > 0$ there exist $\Delta > 0$ and $T = T(\Delta) > 0$ such that for any $\zeta \in K(\Delta)$ there exists a trajectory  $\varphi$ on $[0,T]$ such that $\varphi(0) = \zeta$, $\varphi(T) =  \xi^*$, and $S_{[0,T]}(\varphi | \zeta) \leq \gamma$.
\label{lemma:connection-to-gase}
\end{lemma}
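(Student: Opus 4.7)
The plan is to build the required trajectory in two concatenated phases: a zero-cost McKean-Vlasov phase that drives $\zeta$ into an arbitrarily small neighbourhood of $\xi^*$ (in both total-variation distance and $\vartheta$-moment), followed by a short finishing phase that lands at $\xi^*$ exactly with cost at most $\gamma$. The finishing phase is precisely what Lemma~\ref{lemma:v-continuity} is designed to supply: for each $\gamma > 0$, it should provide (in appropriate formulation) a $\delta > 0$ and a $T_2 > 0$ such that every $\eta$ with $d(\eta, \xi^*) \leq \delta$ and $|\langle \eta, \vartheta \rangle - \langle \xi^*, \vartheta \rangle| \leq \delta$ admits a trajectory on $[0, T_2]$ from $\eta$ to $\xi^*$ of cost at most $\gamma$.

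To execute the first phase, fix $\Delta > 0$ and observe that $K(\Delta) \subseteq \K_M$ for $M = \langle \xi^*, \vartheta \rangle + \Delta$. Assumption~\ref{assm:zlogz-convergence} then yields $\sup_{\nu \in \K_M} |\langle \mu_\nu(t), \vartheta \rangle - \langle \xi^*, \vartheta \rangle| \to 0$ as $t \to \infty$. Combining this with global asymptotic stability from~\ref{assm:gase} and continuous dependence on initial conditions on the compact $\K_M$ (which rests on the well-posedness afforded by~\ref{assm:a3}), one upgrades pointwise convergence to uniform convergence $d(\mu_\zeta(t), \xi^*) \to 0$ on $K(\Delta)$. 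Hence there exists $T_1 = T_1(\Delta) > 0$ such that $\mu_\zeta(T_1) \in K(\delta)$ for every $\zeta \in K(\Delta)$. Concatenating the McKean-Vlasov path $\mu_\zeta$ on $[0, T_1]$ (cost zero, since it solves~\eqref{eqn:mve}) with the $\gamma$-cost trajectory on $[T_1, T_1 + T_2]$ from $\mu_\zeta(T_1)$ to $\xi^*$ furnished by Lemma~\ref{lemma:v-continuity}, and taking $T \coloneqq T_1 + T_2$, one obtains the desired path with total cost at most $\gamma$.

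The principal obstacle is the direction of the small-cost connection used in the finishing phase: as highlighted in the introduction, the regularity that Lemma~\ref{lemma:v-continuity} establishes most directly is the existence of small-cost trajectories \emph{from} $\xi^*$ \emph{to} nearby points, while what the argument above needs is the reverse, trajectories \emph{to} $\xi^*$ from nearby points. Because $\E$ is a directed edge set in which backward transitions occur only via the reset edges $(z,0)$, reversing a mass-transfer construction is not automatic and the cost estimates do not come for free. The expected resolution is to re-run the piecewise-constant-velocity mass-transfer construction that underlies Lemma~\ref{lemma:v-continuity}, but now arranged to ferry the small excess of $\mu_\zeta(T_1) - \xi^*$ to state $0$ along the reset edges and to build up its small deficit from $0$ along the forward edges $(z, z+1)$; the resulting cost, controlled by $\tau^*$ applied to velocities proportional to the discrepancy between $\mu_\zeta(T_1)$ and $\xi^*$ in $d$ and $\vartheta$-moment, vanishes as $\delta \to 0$, which is exactly the small-cost-connection estimate needed to close the argument.
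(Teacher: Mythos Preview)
Your proposal is correct and rests on the same ingredient the paper invokes, namely the mass-transfer construction underlying Lemma~\ref{lemma:v-continuity}. Two simplifications are worth noting.

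First, the McKean-Vlasov phase is superfluous. The lemma only asks for the \emph{existence} of some $\Delta > 0$; you may therefore set $\Delta$ equal to the $\delta$ furnished by the finishing phase and drop the flow entirely, since $\zeta \in K(\Delta)$ is then already close enough to $\xi^*$ for the direct connection. (This also sidesteps the need to justify uniform convergence of $d(\mu_\zeta(t),\xi^*)$ over $K(\Delta)$, which does not follow from pointwise convergence plus continuous dependence alone.)

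Second, the direction issue you flag is already resolved inside the proof of Lemma~\ref{lemma:v-continuity}: its final paragraph explicitly interchanges the roles of $\xi$ and $\xi_n$ to produce a trajectory \emph{from} $\xi_n$ \emph{to} $\xi$ with cost $\varepsilon_n \to 0$. The four-stage construction (reset excess mass to state $0$ along edges $(z,0)$, then rebuild the deficit along forward edges $(z,z+1)$) is symmetric in the endpoints and uses only edges in $\E$, so no new estimates are required. To obtain a single $T$ uniform over $\zeta \in K(\Delta)$, extend each trajectory by the constant path at $\xi^*$, which carries zero cost since $\xi^*$ is an equilibrium of~\eqref{eqn:mve}.
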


We now prove the LDP lower bound for the family $\{\wp^N, N\geq 1\}$.
\begin{lemma}
\label{lemma:lower-bound-specific-path}
For any $\gamma > 0$, $\delta > 0$, and $\xi \in \MZ$, there exists $N_0 \geq 1 $ such that
\begin{align}
\wp^N\{\zeta \in \MZ: d(\zeta, \xi) < \delta\} \geq \exp\{-N(V(\xi) + \gamma)\}
\label{eqn:invariant-measure-lb}
\end{align}
for all $N \geq N_0$.
\end{lemma}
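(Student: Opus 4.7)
The plan is to exploit the invariance of $\wp^N$ under the semigroup of $\LN$ to reduce the claim to a uniform finite-time trajectory estimate, and then to invoke the uniform LDP lower bound of Theorem~\ref{thm:uniform-ldp-mun}. By the exponential tightness established in Proposition~\ref{prop:invariant-measure-exp-tightness}, I first fix $M$ large enough that $\wp^N(\K_M) \geq 1/2$ for every $N$. For any $T_0 > 0$, invariance then gives
\begin{align*}
\wp^N\{\zeta : d(\zeta,\xi) < \delta\}
= \int \PN_\nu\bigl(d(\mu^N(T_0),\xi) < \delta\bigr)\, d\wp^N(\nu)
\geq \tfrac{1}{2} \inf_{\nu \in \K_M \cap \MNZ} \PN_\nu\bigl(d(\mu^N(T_0),\xi) < \delta\bigr),
\end{align*}
so it suffices to produce, for every $\nu \in \K_M$, a trajectory $\varphi_\nu$ terminating at $\xi$ whose cost is bounded by $V(\xi) + 2\gamma/3$ uniformly in $\nu$, and then to apply the uniform process-level lower bound along it.

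I would build $\varphi_\nu$ from three segments of durations $t_1,t_2,t_3$, setting $T_0 := t_1+t_2+t_3$. On $[0,t_1]$, let $\varphi_\nu$ equal the McKean-Vlasov flow $\mu_\nu$, which contributes zero cost. Assumptions~\ref{assm:gase} and~\ref{assm:zlogz-convergence}, combined with the continuous dependence of $\mu_\nu(t)$ on $\nu$ supplied by~\ref{assm:a3} and the compactness of $\K_M$, permit $t_1$ to be chosen so large that $\mu_\nu(t_1) \in K(\Delta)$ for every $\nu \in \K_M$; here $\Delta$ and $t_2 = T(\Delta)$ are the quantities produced by Lemma~\ref{lemma:connection-to-gase} at cost tolerance $\gamma/3$. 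On $[t_1, t_1+t_2]$, append the trajectory provided by Lemma~\ref{lemma:connection-to-gase} that takes $\mu_\nu(t_1)$ to $\xi^*$ with cost at most $\gamma/3$. On $[t_1+t_2, T_0]$, append a $\nu$-independent trajectory $\psi^*$ from $\xi^*$ to $\xi$ with $S_{[0,t_3]}(\psi^*|\xi^*) \leq V(\xi) + \gamma/3$, supplied directly by the definition~\eqref{eqn:v} of $V$. The concatenation $\varphi_\nu$ starts at $\nu$, ends at $\xi$, and has total cost at most $V(\xi) + 2\gamma/3$.

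Applying Theorem~\ref{thm:uniform-ldp-mun} to the compact set $\K_M$ with cost bound $s = V(\xi) + 2\gamma/3$ and tolerance $\gamma/3$ then yields, for all $N$ sufficiently large and every $\nu \in \K_M \cap \MNZ$,
\begin{align*}
\PN_\nu\bigl(\rho(\mu^N, \varphi_\nu) < \delta'\bigr) \geq \exp\{-N(V(\xi) + \gamma)\},
\end{align*}
where $\delta'$ is chosen small enough that Skorohod-proximity to $\varphi_\nu$ forces $d(\mu^N(T_0),\xi) < \delta$; this is possible because $\varphi_\nu(T_0)=\xi$ and the final segment $\psi^*$ is continuous at $T_0$. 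Combining with the invariance bound above and absorbing the factor $1/2$ into the exponential (by replacing $\gamma$ with $\gamma/2$ from the outset) yields the asserted~\eqref{eqn:invariant-measure-lb}.

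The main obstacle will be establishing the uniform approach of the McKean-Vlasov flow to $\xi^*$ over $\nu \in \K_M$ in both the total variation and the $\vartheta$-moment senses needed to place $\mu_\nu(t_1)$ in $K(\Delta)$. Assumption~\ref{assm:zlogz-convergence} delivers the $\vartheta$-moment part directly and uniformly; the total-variation part must be extracted from~\ref{assm:gase} by combining pointwise global asymptotic stability with continuous dependence on initial condition and compactness of $\K_M$, or alternatively by a truncation argument in which the pointwise convergence guaranteed by~\ref{assm:gase} is upgraded to total variation convergence using the uniform $\vartheta$-moment control. A secondary technical point is that the trajectories $\varphi_\nu$ need not vary continuously in $\nu$ (Lemma~\ref{lemma:connection-to-gase} only asserts existence), but this is harmless because Theorem~\ref{thm:uniform-ldp-mun} is uniform jointly over starting points in $\K_M$ and over trajectories subject only to the cost bound $s$.
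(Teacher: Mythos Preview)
Your proposal is correct and follows essentially the same approach as the paper: use invariance of $\wp^N$ and exponential tightness to reduce to a uniform finite-time estimate, build a three-segment trajectory (McKean--Vlasov flow into $K(\Delta)$, then Lemma~\ref{lemma:connection-to-gase} to reach $\xi^*$, then a near-optimal path to $\xi$), and apply the uniform LDP lower bound of Theorem~\ref{thm:uniform-ldp-mun}. The only differences are cosmetic (the paper splits $\gamma$ into quarters rather than thirds and introduces an $N_1$ for the $\wp^N(\K_M)\geq 1/2$ bound), and you are in fact more explicit than the paper about why the $\delta'$ can be chosen uniformly in $\nu$ (the terminal segment is $\nu$-independent) and about what is needed to place $\mu_\nu(t_1)$ in $K(\Delta)$ uniformly over $\K_M$.
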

\begin{proof}
Fix $\gamma > 0$, $\delta > 0$, and $\xi \in \MZ$. We may assume that $V(\xi) < \infty$; if $V(\xi) = \infty$ then~(\ref{eqn:invariant-measure-lb}) trivially holds for all $N \geq 1$. Choose some $M > 0$ and $N_1 \geq 1$ such that  $\wp^N(\K_M) \geq 1/2$ for all $N \geq N_1$; this is possible from the exponential tightness of the family $\{\wp^N, N \geq 1\}$, see Proposition~\ref{prop:invariant-measure-exp-tightness}. Using Lemma~\ref{lemma:connection-to-gase}, choose $\varepsilon > 0$ and $T_0 > 0$ such that for any $\zeta_1 \in K(\varepsilon)$ there exists a trajectory $\varphi_1$ on $[0,T_0]$ such that $\varphi_1(0) = \zeta_1, \varphi_1(T_0) = \xi^*$, and $S_{[0,T_0]}(\varphi_1|\zeta_1) \leq \gamma/4$. Since $\xi^*$ is the globally asymptotically stable equilibrium for~\eqref{eqn:mve} and since $\K_M$ is compact, for the above $\varepsilon > 0$, there exists a $T_1 > 0$ such that for any $\zeta \in \K_M$ we have $\mu_\zeta(T_1) \in K(\varepsilon)$, where $\mu_\zeta$ denotes the solution to the McKean-Vlasov equation~\eqref{eqn:mve} with initial condition $\zeta$ (see assumption~\ref{assm:zlogz-convergence}). Also, by the definition of $V(\xi)$, there exists a $T_2 > 0$ and a trajectory $\varphi_2$ such that $\varphi_2(0) = \xi^*, \varphi_2(T_2) = \xi$, and $S_{[0,T_2]}(\varphi_2 | \xi^*) \leq V(\xi) + \gamma/4$. Let $T = T_1 +T_0+ T_2$. Given $\zeta \in \K_M$, we construct a  trajectory $\varphi_\zeta$ on $[0, T]$ by using the above three trajectories as follows. Let $\varphi_\zeta(0) = \zeta$; $\varphi_\zeta(t) = \mu_\zeta(t)$ for $t \in [0,T_1]$; $\varphi_\zeta(t) = \varphi_1(t-T_1)$ for $t \in (T_1, T_1+T_0]$; and $\varphi_\zeta(t) = \varphi_2(t-(T_1+T_0))$ for $t \in (T_1+T_0, T]$. Note that $S_{[0,T]}(\varphi_\zeta | \zeta) \leq V(\xi) + \gamma/2$.  

Recall that $d$ is the metric on $\MZ$ and $\rho$ is the metric on $\DMZ$.  Note that we can choose a $\delta^\prime > 0$ (depending on $T$ and $M$) such that $\rho(\varphi, \varphi_\zeta) < \delta^\prime$ implies that $d(\varphi(T), \varphi_\zeta(T)) < \delta$ for any $\varphi	\in \DMZ$ and $\zeta \in \K_M$. Indeed, if such a choice is not possible, then there exists  a sequence $\{\zeta_n\} \in \K_M$, and a sequence of trajectories $\{\varphi_n\} \subset \DMZ$ such that $S_{[0,T]}(\varphi_{\zeta_n}| \zeta_n) \leq V(\xi) + \gamma/2$  and  $\rho(\varphi_n, \varphi_{\zeta_n}) < 1/n$ for each $n \geq 1$, but $d(\varphi_n(T), \varphi_{\zeta_n}(T)) > \delta$. By the compactness of the level sets of $S_{[0,T]}$ in Lemma \ref{lemma:compactness-S}, it follows that there exists a subsequential limit for $\{\varphi_{\zeta_{n_k}}\}_{k \geq 1}$ (say, $\varphi^*$); since $\rho(\varphi_n, \varphi_{\zeta_n}) < 1/n$, $\varphi_{n_k}$ also converges to $\varphi^*$ in $\DMZ$ as $k \to \infty$.  Furthermore, since $S_{[0,T]}(\varphi^* | \varphi^*_0) <\infty$, from Theorem \ref{thm:weak-equation}, we have that $[0,T] \ni t \mapsto \varphi^*(t)$ is continuous.  Since $\DMZ \ni \varphi   \mapsto \varphi(T)$ is continuous at all $\varphi$ such that $t \mapsto \varphi(t)$ is continuous (see, e.g, \cite[page 124]{billingsley-convergence}), it follows that $d(\varphi_{n_k}(T), \varphi^*(T)) \to 0$ as $k \to \infty$. This  contradicts the assumption $d(\varphi_n(T), \varphi_{\zeta_n}(T)) > \delta$. This shows that we can choose a $\delta^\prime > 0$ such that $\rho(\varphi, \varphi_\zeta) < \delta^\prime$ implies that $d(\varphi(T), \varphi_\zeta(T)) < \delta$ for any $\varphi	\in \DMZ$ and $\zeta \in \K_M$. Therefore, for each $N \geq N_1$, we have
\begin{align}
\wp^N\{\zeta \in \MZ: d(\zeta, \xi) < \delta\} &=  \int_{\MNZ} \PN_\zeta(d(\mu^N(T),\xi) < \delta)\wp^N(d\zeta)  \nonumber \\
&\geq \int_{\K_M \cap \MNZ} \PN_\zeta(d(\mu^N(T),\xi ) < \delta)\wp^N(d\zeta)  \nonumber \\
& \geq \int_{\K_M \cap \MNZ} \PN_\zeta(\rho(\mu^N, \varphi_\zeta) < \delta^\prime ) \wp^N(d\zeta) \nonumber \\
& \geq \frac{1}{2} \inf_{\zeta \in \K_M \cap \MNZ} \PN_\zeta(\rho(\mu^N, \varphi_\zeta) < \delta^\prime );
\label{eqn:lb-series}
\end{align}
here the first equality follows since $\wp^N$ is invariant to time shifts. By the uniform LDP lower bound in Theorem~\ref{thm:uniform-ldp-mun}, there exists $N_2 \geq N_1$ such that 
\begin{align*}
\PN_\zeta(\rho(\mu^N, \varphi) < \delta^\prime ) \geq \exp\{-N(S_{[0,T]}(\varphi | \zeta)+\gamma/4)\}
\end{align*}
for all $\zeta \in \K_M \cap \MNZ$, $\varphi \in \Phi_\zeta^{[0,T]}(V(\xi)+\gamma/2)$, and $N \geq N_2$. Noting that $S_{[0,T]}(\varphi_\zeta | \zeta) \leq V(\xi) + \gamma/2$ for any $\zeta \in \K_M \cap \MNZ$, and using the above uniform LDP lower bound,  \eqref{eqn:lb-series} becomes
\begin{align*}
\wp^N\{\zeta \in \MZ: d(\zeta, \xi) < \delta\} & \geq \frac{1}{2} \exp\{-N(V(\xi) + 3\gamma/4)\}
\end{align*}
for all $N \geq N_2$. Finally, choose $N_0 \geq N_2$ so that $1/2 \geq \exp\{-N\gamma/4\}$. Then the above becomes
\begin{align*}
\wp^N\{\zeta \in \MZ: d(\zeta, \xi) < \delta\} & \geq \exp\{-N(V(\xi) + \gamma)\}
\end{align*}
for all $N \geq N_0$. This completes the proof of LDP lower bound for the family $\{\wp^N, N\geq 1\}$.
\end{proof}

\section{Properties of the quasipotential}
\label{section:v-properties}
In this section we prove three key properties of the quasipotential $V$ defined in \eqref{eqn:v}. These three properties are (i) a  characterisation of the set of points for which $V$ is finite, (ii) a certain continuity property for $V$, and (iii) the compactness of the lower level sets of $V$. These properties play an important role in the proof of the LDP upper bound in Section~\ref{section:ldp-upper-bound}.
\subsection{A characterisation of finiteness of the quasipotential}
Recall the function $\vartheta$ defined in \eqref{eqn:theta} and the compact sets $\K_M$, $M > 0$, defined in \eqref{eqn:KM}. We start with a lemma that enables us to connect $\delta_0$, the point mass at state $0$, to a point $\xi \in \K_M$ for some $M > 0$. This connection is made using a piecewise constant velocity trajectory wherein for each $z \geq 1$, we move the mass $\xi(z)$ from state $0$ to state $z$ in $z$ steps; in the $k$th step, we move the mass $\xi(z)$ from state $k-1$ to state $k$ with unit velocity. The lemma asserts that the cost of this piecewise constant velocity trajectory is bounded above by a constant that depends only on $M$.
\begin{lemma}
\label{lemma:connection-delta0-xi}
Given $M > 0$ there exists a constant $C_M$ depending on $M$ such that for any $\xi \in \K_M$ there exists  a $T > 0$ and a trajectory $\varphi$ on $[0,T]$ such that $\varphi(0) = \delta_0$, $\varphi(T) = \xi$, and $S_{[0,T]}(\varphi | \delta_0) \leq C_M$.
\end{lemma}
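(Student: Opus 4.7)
The strategy is to build $\varphi$ as a concatenation of \emph{ladders}: for each $z \geq 1$ with $\xi(z) > 0$, transport the mass $\xi(z)$ along the chain $0 \to 1 \to \cdots \to z$ through the forward edges of $\E$ (the only route available under \ref{assm:a1}). To keep both the terminal time and the cost finite simultaneously, larger ladders are traversed faster on a polynomial schedule.

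\emph{Construction.} Enumerate $z_1 < z_2 < \cdots$ the elements of $\{z \geq 1 : \xi(z) > 0\}$, and set $c_z \coloneqq z^{-3}$. Allocate to target $z_i$ the time block $[T_{i-1}, T_i]$ of length $z_i c_{z_i}$, subdivided into $z_i$ sub-intervals of length $c_{z_i}$. On the $k$-th sub-interval of target $z_i$, set the flux across edge $(k-1, k)$ equal to $\xi(z_i)/c_{z_i}$ and the flux across every other edge of $\E$ equal to $0$; integrating the continuity equation with $\varphi(0) = \delta_0$ defines an absolutely continuous $\varphi$ on $[0, T]$ with $T \coloneqq \sum_i z_i c_{z_i} \leq \sum_{z \geq 1} z^{-2} < \infty$. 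The checkpoint distributions $\varphi_{T_i}$ converge to $\xi$ in total variation (state $z_j$ holds $\xi(z_j)$ for $j \leq i$, while state $0$ holds the residual $\sum_{j > i} \xi(z_j) \to 0$), so defining $\varphi(T) \coloneqq \xi$ extends $\varphi$ continuously to $[0,T]$.

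\emph{Cost bound.} I apply Theorem~\ref{thm:weak-equation} (non-variational representation) to the flux above to produce an upper bound on $S_{[0,T]}(\varphi \mid \delta_0)$. On the active edge $(k-1, k)$ during the $k$-th sub-interval of target $z$, the mass at state $k-1$ is at least $\xi(z)(1 - \tilde{t}/c_z)$ (the not-yet-transferred portion) and $\lambda_{k-1, k}(\varphi_t) \geq \underline{\lambda}/k$ by \ref{assm:a2}; a direct integration exploiting $\int_0^1 \log(1/s)\,ds = 1$ bounds the active contribution per step by $\xi(z)[\log k + 3 \log z + C]$. Summing in $k$ with $\log(z!) \leq z \log z$ gives a contribution $\xi(z)[z \log z + O(z)]$ per target; summing in $z$ uses $\langle \xi, \vartheta \rangle \leq M$ together with the elementary inequality $\langle \xi, \iota \rangle \leq 1 + M$ (from the pointwise bound $z \leq 1 + \vartheta(z)$) to yield a total of order $M$. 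On each inactive edge, $\tau^*(-1) = 1$ and \ref{assm:a2} gives $\sum_w \lambda_{w, w+1}(\varphi_t) \varphi_t(w) + \sum_{w \geq 1} \lambda_{w, 0}(\varphi_t) \varphi_t(w) \leq 2 \overline{\lambda}$, so the inactive contribution is at most $2 \overline{\lambda} T = O(1)$. Combining the two gives $S_{[0,T]}(\varphi \mid \delta_0) \leq C_M$ for a constant depending only on $M$, $\underline{\lambda}$, $\overline{\lambda}$.

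The main obstacle is balancing three competing requirements: a finite terminal time $T$, a finite cost, and exact arrival at $\xi$ rather than at a truncation of it. The polynomial schedule $c_z = z^{-3}$ is what resolves this: speeding up each sub-interval by the factor $c_z^{-1}$ costs only a logarithmic $\log(1/c_z) \sim \log z$ per step, a penalty absorbed precisely by the $\vartheta(z) = z \log z$ weighting in the hypothesis $\xi \in \K_M$, while the corresponding time per target $z \cdot c_z = z^{-2}$ remains summable.
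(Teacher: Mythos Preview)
Your proof is correct and takes a genuinely different route from the paper's.

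The paper moves each mass $\xi(z)$ at \emph{unit velocity}, so step $k$ of ladder $z$ takes time $\xi(z)$ and the total horizon is $T=\sum_z z\,\xi(z)=\langle\xi,\iota\rangle$. It then works with a finite truncation $\varphi^J$ (targets $1,\ldots,J$ only), extends by a constant tail to a common horizon, and passes to the limit $J\to\infty$ via the compactness of level sets of $S_{[0,T]}$ (Lemma~\ref{lemma:compactness-S}). The cost bound that emerges is $\sum_z z\,\xi(z)\log(1/\xi(z))$, which is not directly the $\vartheta$-moment and has to be controlled by the case split $\xi(z)\lessgtr z^{-3}$ in~\eqref{eqn:boundzlogz}.

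Your polynomial schedule $c_z=z^{-3}$ replaces all of this. The horizon $T=\sum_z z^{-2}$ is a universal constant, so no truncation or limit is needed; the trajectory lands exactly at $\xi$ by continuity at $T$. The speed-up costs an extra $\log(1/c_z)=3\log z$ per step, and after summing the $z$ steps and weighting by $\xi(z)$ this is $3\,\xi(z)\,z\log z$ --- precisely the $\vartheta$-integrand --- so $\langle\xi,\vartheta\rangle\le M$ bounds the active cost immediately, with no need for~\eqref{eqn:boundzlogz}. In short, the paper's argument is conceptually simpler (unit velocity) but leans on a compactness-and-limit step and a nontrivial combinatorial estimate; your argument front-loads a cleverer schedule and is then entirely elementary.

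One minor point: citing Theorem~\ref{thm:weak-equation} for the upper bound is slightly off, since that theorem gives an \emph{equality} under the a priori hypothesis $S_{[0,T]}(\varphi)<\infty$. What you actually use is that for any prescribed nonnegative edge fluxes $q_{z,z'}$ realising $\dot\varphi$, the integrand in~\eqref{eqn:rate-function-finite-duration} satisfies, edge by edge,
\[
\sup_{\alpha}\Bigl\{(q_{z,z'}-r_{z,z'})(\alpha(z')-\alpha(z))-\tau(\alpha(z')-\alpha(z))\,r_{z,z'}\Bigr\}\le r_{z,z'}\,\tau^*\!\bigl(q_{z,z'}/r_{z,z'}-1\bigr),
\]
with $r_{z,z'}=\lambda_{z,z'}(\varphi_t)\varphi_t(z)$; summing and integrating gives the desired upper bound on $S_{[0,T]}(\varphi\,|\,\delta_0)$ directly from the variational definition, without assuming finiteness in advance.
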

\begin{proof}
Fix $M > 0$ and $\xi \in \K_M$. Fix $J \in \Z \setminus \{0\}$ and define $\Z_J = \{1,2,\ldots,J\}$, $t_z = z \xi(z)$ for $z \in \Z_J$, and $T_z = \sum_{z^\prime \in \Z_J, z^\prime \geq z} t_{z^\prime}$. Note that $T_J \leq T_{J-1} \leq \cdots \leq  T_{1}$. We shall first construct a trajectory $\varphi^J$ such that $\varphi^J(0) = \delta_0$, $\varphi^J(T_1)(z) = \xi(z)$ for each $z \in  \Z_J$, and $S_{[0,T_1]}(\varphi^J | \delta_0)$ bounded above by a constant independent of $J$.

Let $T_{J+1} = 0$. For each $z \in \Z_J$, starting with $z = J$, we move the mass $\xi(z)$ from the state $0$ to state $z$ using a piecewise unit velocity trajectory over the time duration $(T_{z+1}, T_{z+1}+t_z]$. We define this trajectory $\varphi^J$ on $[0,T_1]$ as follows. Let $\varphi^J_0 = \delta_0$. For each $z \in \Z_J$ and $1 \leq k \leq z$, when  $t \in (T_{z+1}+(k-1)\xi(z), T_{z+1}+k\xi(z)]$, let
\begin{align*}
\dot{\varphi}^J_t(l) = \left\{ \begin{array}{ll}
1 & \text{ if } l = k \\
-1 & \text{ if }l = k-1 \\
0 & \text { otherwise},
\end{array}
\right.
\end{align*}
$l \in \Z$, and define $\varphi^J_t(l) = \delta_0(l) + \int_{[0,t]} \dot{\varphi}^J_u(l) du$, $l \in \Z$, $t \in [0,T]$. 

We now calculate the cost of this trajectory. Fix $z \in \Z$ such that $\xi(z) > 0$, and let $1 \leq k \leq z$. For each $t \in (T_{z+1}+(k-1)\xi(z), T_{z+1}+k\xi(z))$ and $\alpha \in \CBZ$, note that
\begin{align*}
\langle \alpha,\dot{\varphi}^J_t & - \Lambda_{\varphi^J_t}^*\varphi^J_t \rangle- \sum_{(z,z^\prime) \in \mathcal{E}} \tau(\alpha(z^\prime) - \alpha(z)) \lambda_{z,z^\prime}(\varphi^J_t) \varphi^J_t(z)  \\
 & = (\alpha(k) - \alpha(k-1)) - \sum_{(z,z^\prime) \in \E} (\exp\{\alpha(z^\prime) - \alpha(z)\} - 1) \lambda_{z,z^\prime} (\varphi^J_t) \varphi^J_t(z).
\end{align*}
Hence,
\begin{align}
\sup_{\alpha \in \CBZ} \biggr\{\langle \alpha,\dot{\varphi}^J_t &- \Lambda_{\varphi^J_t}^*\varphi^J_t \rangle- \sum_{(z,z^\prime) \in \mathcal{E}} \tau(\alpha(z^\prime) - \alpha(z)) \lambda_{z,z^\prime}(\varphi^J_t) \varphi^J_t(z) \biggr\}  \nonumber \\
& \leq \sup_{x \in \R} (x - (\exp\{x\}-1) \lambda_{k-1,k}(\varphi^J_t) \varphi^J_t(k-1))  \nonumber \\
& \qquad  + \sup_{\alpha \in \CBZ} \left( - \sum_{(z,z^\prime) \in \E; (z,z^\prime) \neq (k-1,k)} (\exp\{\alpha(z^\prime) - \alpha(z)\} - 1) \lambda_{z,z^\prime}(\varphi^J_t) \varphi^J_t(z)\right) \nonumber \\
& \leq \log\left(\frac{1}{\varphi^J_t(k-1) \lambda_{k-1,k}(\varphi^J_t)}\right) +2 \overline{\lambda} \nonumber \\
& \leq   \log\left(\frac{1}{\varphi^J_t(k-1)}\right) + \log k  +  \log\left(\frac{1}{\underline{\lambda}}\right) +  2 \overline{\lambda},
\label{eqn:connection-series1}
\end{align}
where the last two inequalities follow from assumption~\ref{assm:a2}. Consider the first term above. For $k > 1$, integration of this quantity over the time duration $t \in (T_{z+1}+(k-1)\xi(z), T_{z+1}+k\xi(z))$ gives
\begin{align*}
\int_{(T_{z+1}+(k-1)\xi(z), T_{z+1}+k\xi(z))} \,  \log\left(\frac{1}{\varphi^J_t(k-1)} \right) dt & = -\int_{\xi(z)}^0 \log \left(\frac{1}{u}\right) \, du \\
& = (u\log u - u) \biggr|_{\xi(z)}^0\\
& = \xi(z) \log \left(\frac{1}{\xi(z)}\right) + \xi(z),
\end{align*}
where the first equality follows from the variable change $u = \varphi^J_t(k-1)$ and the facts (i) $\dot{\varphi}^J_t(k-1) = -1$, (ii) $\varphi^J_t(k-1) = \xi(z)$ when $t = T_{z+1}+(k-1)\xi(z)$, (iii) $\varphi^J_t(k-1) = 0$ when $t = T_{z+1}+k\xi(z)$, and (iv) $du = -dt$. For $k=1$, using the bound $\varphi^J_t(0) \geq \varphi^J_t(0) - (1 -  \sum_{z^\prime = z}^J \xi(z^\prime))$, we get
\begin{align*}
\int_{(T_{z+1}, T_{z+1}+\xi(z))} & \, \log\left(\frac{1}{\varphi^J_t(0)} \right)dt \\
& \leq \int_{(T_{z+1}, T_{z+1}+\xi(z))} \log\left(\frac{1}{\varphi^J_t(0) - (1 -  \sum_{z^\prime = z}^J \xi(z^\prime))} \right)dt \\
& =  -\int_{\xi(z)}^0 \log \left(\frac{1}{u}\right) \, du,
\end{align*}
where the last equality follows from the variable change $u = \varphi^J_t(0) - (1 - \sum_{z^\prime = z}^J \xi(z^\prime))$, and the facts (i) $\dot{\varphi}^J_t(0) = -1$, (ii) $\varphi^J_t(0) = 1- \sum_{z^\prime = z+1}^J \xi(z^\prime)$ when $t = T_{z+1}$ so that $\varphi^J_t(0) - (1 -  \sum_{z^\prime = z}^J \xi(z^\prime)) = \xi(z)$ when $t = T_{z+1}$, (iii)  $\varphi^J_t(0) = 1- \sum_{z^\prime = z}^J \xi(z^\prime)$ when $t = T_{z+1}+\xi(z)$ so that $\varphi^J_t(0) - (1 -  \sum_{z^\prime = z}^J \xi(z^\prime)) = 0$ when $t = T_{z+1} + \xi(z)$, and (iv) $du = -dt$. Thus, proceeding as before for the case $k > 1$, we arrive at
\begin{align*}
\int_{(T_{z+1}, T_{z+1}+\xi(z))} \log\left(\frac{1}{\varphi^J_t(0)} \right) dt \leq \xi(z) \log \left(\frac{1}{\xi(z)}\right) + \xi(z).
\end{align*}
Hence, integrating~\eqref{eqn:connection-series1} over $t \in (T_{z+1}+(k-1)\xi(z), T_{z+1}+k\xi(z))$ and  summing over $1 \leq k \leq z$, we get, for each $z \in \Z_J$,
\begin{align}
\int_{(T_{z+1}, T_{z+1}+t_z)} \, \sup_{\alpha \in \CBZ} \biggr\{\langle \alpha,\dot{\varphi}^J_t &- \Lambda_{\varphi^J_t}^*\varphi^J_t \rangle- \sum_{(z,z^\prime) \in \mathcal{E}} \tau(\alpha(z^\prime) - \alpha(z)) \lambda_{z,z^\prime}(\varphi^J_t) \varphi^J_t(z) \biggr\} dt \nonumber \\
& \leq  z\xi(z) \log \left(\frac{1}{\xi(z)}\right)  + \tilde{C}_z,  \label{eqn:bound-cost-zlogz}
\end{align}
where $\tilde{C}_z =  (z \log z+z) \xi(z) +  z \xi(z) \left(\log\left(\frac{1}{\underline{\lambda}}\right) +  2 \overline{\lambda}\right).$ Let $\tilde{C}^J = \sum_{z \in \Z_J} \tilde{C}_z$. Thus, summing the above display over $z \in \Z_J$, we arrive at
\begin{align*}
S_{[0,T_1]}(\varphi^J | \delta_0) \leq \sum_{z \in \Z_J} z \xi(z) \log  \left(\frac{1}{\xi(z)}\right) + \tilde{C}^J.
\end{align*}
Note that 
\begin{align}
\sum_{z \in \Z_J} z \xi(z) \log  \left(\frac{1}{\xi(z)}\right) & = \sum_{\stackrel{z \in \Z_J:}{\xi(z) \leq 1/z^3}}z \xi(z) \log  \left(\frac{1}{\xi(z)}\right) + \sum_{\stackrel{z \in \Z_J:}{\xi(z) > 1/z^3}} z \xi(z) \log  \left(\frac{1}{\xi(z)}\right) \nonumber \\
& \leq \frac{1}{e} +  \sum_{\stackrel{z \in \Z_J\setminus \{1\}:}{\xi(z) \leq 1/z^3}} \frac{3 \log z}{z^2} + 3 \sum_{\stackrel{z \in \Z_J:}{\xi(z) > 1/z^3}} (z \log z) \xi(z) \nonumber \\
& \leq \frac{1}{e} +  3\sum_{z \in \Z_J}\left\{\frac{\log z}{z^2}  + (z \log z) \xi(z)\right\},
\label{eqn:boundzlogz}
\end{align}
where the first inequality comes from the fact that the mapping  $x \mapsto x \log (1/x)$ is monotonically increasing for $x \in [0, 1/e]$. Hence,
\begin{align*}
S_{[0,T_1]}(\varphi^J | \delta_0) \leq \frac{1}{e} +  3\sum_{z \in \Z_J}\left\{\frac{\log z}{z^2}  + (z \log z) \xi(z)\right\} + \tilde{C}^J, \, J \geq 1.
\end{align*}

Define $T = \sum_{z \in \Z} z \xi(z)$. We now extend the trajectory $\varphi^J$  to $(T_1, T]$ by defining $\varphi^J_t = \varphi^J_{T_1}$ for $t \in (T_1, T]$. Noting that $\dot{\varphi}^J_t(z) = 0$ for all $z \in \Z$ on $t \in (T_1, T]$,	this extension suffers an additional cost of at most $2 \overline{\lambda} T$. Hence, we get 
\begin{align*}
S_{[0,T]}(\varphi^J | \delta_0) \leq \frac{1}{e} +  3\sum_{z \in \Z_J}\left\{\frac{\log z}{z^2}  + (z \log z) \xi(z)\right\} + \tilde{C}^J +  2\overline{\lambda}T.
\end{align*}
Noting that (i) the right hand side above is upper bounded by $\langle \xi, \vartheta \rangle C(\overline{\lambda}, \underline{\lambda})$, where $C(\overline{\lambda}, \underline{\lambda})$ is a constant depending on $\overline{\lambda}$ and $\underline{\lambda}$, and (ii) $\langle \xi, \vartheta \rangle \leq M$, the above display yields 
\begin{align*}
S_{[0,T]}(\varphi^J | \delta_0) \leq C(M, \overline{\lambda}, \underline{\lambda}),
\end{align*}
where $C(M, \overline{\lambda}, \underline{\lambda})$ is a constant depending on $M, \overline{\lambda}$, and $\underline{\lambda}$. Using the compactness of the level sets of $S_{[0,T]}$ (see Lemma~\ref{lemma:compactness-S}), it follows that the sequence of trajectories $\{\varphi^J, J \geq 1\}$ has a convergent subsequence. Re-indexing the original sequence, let $\varphi^J \to \varphi$ in $\DMZ$ as $J \to \infty$. By construction, for each $J \in \Z \setminus \{0\}$, $\varphi^J_T(z) = \xi(z)$ for all $z \in \Z_J$; hence $\varphi_T(z) = \xi(z)$ for all $z \in \Z$. Recall that lower semicontinuity of $S_{[0,T]}$ was proved in the course of the proof of Lemma \ref{lemma:compactness-S}. Therefore, it follows that
\begin{align*}
S_{[0,T]}(\varphi | \delta_0) \leq \liminf_{J \to \infty}  S_{[0,T]}(\varphi^J | \delta_0) \leq C(M, \overline{\lambda}, \underline{\lambda}).
\end{align*}
This completes the proof of the lemma.
\end{proof}

We are now ready to characterise the set of points $\xi$ in $\MZ$ whose $V(\xi)$ is finite.
\begin{lemma} $V(\xi) < \infty$ if and only if $\xi \in \K$. Furthermore, for any $M > 0$, there exists a constant $C_M > 0$ such that $\xi \in \K_M$ implies $V(\xi) \leq C_M$.
\label{lemma:v-finiteness}
\end{lemma}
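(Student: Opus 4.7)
The plan is to prove the two implications separately; the quantitative bound on $\K_M$ will emerge directly from the constructive proof of sufficiency.

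\textbf{Sufficiency and the quantitative bound.} I would concatenate two trajectories via $\delta_0$. First, an explicit trajectory $\psi: \xi^* \to \delta_0$ of cost bounded by a constant $C^*$ depending only on $\underline{\lambda}$ and $\overline{\lambda}$: take $\psi_t(z) = (1-t)\xi^*(z)$ for $z \geq 1$ and $\psi_t(0) = \xi^*(0) + t(1-\xi^*(0))$ on $[0,1]$. Realise this by a control that suppresses every forward edge, $h(t, z, z+1) \equiv -1$---contributing $\tau^*(-1)\lambda_{z,z+1}(\psi_t) \psi_t(z) = \lambda_{z,z+1}(\psi_t)\psi_t(z)$ per edge, with sum at most $\overline{\lambda}$ uniformly in $t$---and sets $(1+h(t,z,0))\lambda_{z,0}(\psi_t)\psi_t(z) = \xi^*(z)$ on each backward edge, matching $\dot{\psi}_t(z) = -\xi^*(z)$. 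A direct computation gives
\begin{align*}
\tau^*(h(t,z,0))\lambda_{z,0}(\psi_t)\psi_t(z) = \xi^*(z)\bigl[\log(1/(\lambda_{z,0}(\psi_t)(1-t))) + \lambda_{z,0}(\psi_t)(1-t) - 1\bigr],
\end{align*}
which upon summing over $z \geq 1$ and integrating over $[0,1]$ (using $\int_0^1 \log(1/(\underline{\lambda}(1-t)))\,dt = 1 - \log\underline{\lambda}$) yields a bound independent of $\xi^*$. Concatenating $\psi$ with the trajectory from $\delta_0$ to $\xi$ provided by Lemma~\ref{lemma:connection-delta0-xi} produces $V(\xi) \leq C^* + C_M$, establishing both finiteness and the uniform bound on $\K_M$.

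\textbf{Necessity.} Suppose $V(\xi) < \infty$. Pick $T > 0$ and $\varphi \in \DMZ$ with $\varphi_0 = \xi^*$, $\varphi_T = \xi$, and $S := S_{[0,T]}(\varphi|\xi^*) < \infty$; by Theorem~\ref{thm:weak-equation}, there is a control $h$ whose integrated $\tau^*$-cost equals $S$. I would run a Foster--Lyapunov argument with $\vartheta$, truncated as $\vartheta_K(z) := \vartheta(z\wedge K)$ to justify the calculus. Differentiating $\langle \varphi_t, \vartheta_K \rangle$ via the weak equation~\eqref{eqn:weak-equation} and splitting into uncontrolled and $h$-driven parts, assumption~\ref{assm:a2} gives
\begin{align*}
\sum_z \varphi_t(z) L_0 \vartheta_K(z) \leq C_1 - \underline{\lambda}\langle \varphi_t, \vartheta_K \rangle,
\end{align*}
since the forward contribution $\lambda_{z,z+1}(\vartheta(z+1)-\vartheta(z)) \leq \overline{\lambda}(\log(z+1)+1)/(z+1)$ is uniformly bounded in $z$, while the backward contribution equals $-\lambda_{z,0}\vartheta(z\wedge K) \leq -\underline{\lambda}\vartheta_K(z)$. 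Per-edge Fenchel--Young $hg \leq \tau^*(h)+\tau(g)$ with $g = \vartheta_K(z')-\vartheta_K(z)$ controls the $h$-driven part: forward edges give $\tau(\vartheta(z+1)-\vartheta(z))\lambda_{z,z+1} \leq e\overline{\lambda}$ per state, because $\exp(\vartheta(z+1)-\vartheta(z)) = (z+1)(1+1/z)^z \sim e(z+1)$ exactly cancels the $1/(z+1)$ decay of $\lambda_{z,z+1}$; backward edges give $\tau(-\vartheta_K(z))\lambda_{z,0} \leq \overline{\lambda}\vartheta_K(z)$ using $\tau(-u) \leq u$ for $u \geq 0$. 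Collecting,
\begin{align*}
\langle \varphi_T, \vartheta_K \rangle \leq \langle \xi^*, \vartheta \rangle + S + C_2 T + (\overline{\lambda} - \underline{\lambda})_+ \int_0^T \langle \varphi_t, \vartheta_K \rangle\,dt,
\end{align*}
and Gronwall's inequality delivers a $K$-uniform bound, finite because $\langle \xi^*, \vartheta \rangle < \infty$ by~\ref{assm:zlogz-convergence}. Monotone convergence as $K \to \infty$ yields $\langle \xi, \vartheta \rangle < \infty$, i.e., $\xi \in \K$.

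\textbf{Main obstacle.} The principal difficulty is the Lyapunov bookkeeping in the necessity direction: one must line up the $1/(z+1)$ decay of $\lambda_{z,z+1}$ from~\ref{assm:a2} with the exponential growth $\exp(\vartheta(z+1)-\vartheta(z)) \sim e(z+1)$ of the forward $\tau$-cost, so that all non-$\tau^*$ contributions reduce to an $O(T)$ term plus a linear-in-$\langle \varphi_t, \vartheta \rangle$ term amenable to Gronwall. This exact matching is precisely what makes $\vartheta$ (rather than a slower-growing moment) the correct Lyapunov function and explains why $\K$ is the sublevel set characterising finiteness of $V$.
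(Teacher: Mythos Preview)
Your proof is correct and follows the same overall strategy as the paper: for sufficiency, route through $\delta_0$ and invoke Lemma~\ref{lemma:connection-delta0-xi}; for necessity, test the weak equation against a truncated $\vartheta$ and use Fenchel--Young. The tactical execution differs in two places worth noting.

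For the leg $\xi^* \to \delta_0$, the paper builds a sequential piecewise-constant-velocity path (move $\xi^*(z)$ from state $z$ to $0$, one $z$ at a time) and bounds the variational form~\eqref{eqn:rate-function-finite-duration} directly; your one-shot linear decay $\psi_t = (1-t)\xi^*$ with explicit control is arguably cleaner and gives the same $O(1)$ cost.

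For necessity, the paper applies convex duality in the form $(1+h)g \leq \tau^*(h) + (e^{g}-1)$, i.e., it keeps the factor $(1+h)$ intact rather than splitting off the uncontrolled drift. With $g = \vartheta_n(z')-\vartheta_n(z)$ the backward-edge contribution $e^{-\vartheta_n(z)}-1$ is nonpositive and is simply dropped, so no $\langle\varphi_t,\vartheta_n\rangle$ term ever appears on the right-hand side and one reads off $\langle \varphi_t, \vartheta_n \rangle \leq \langle \xi^*, \vartheta \rangle + S_{[0,T]}(\varphi|\xi^*) + \overline{\lambda}(e-1)T$ directly---no Gronwall. Your split into uncontrolled drift (contributing $-\underline{\lambda}\langle\varphi_t,\vartheta_K\rangle$) and $h$-driven part (Fenchel on $hg$, whose backward-edge piece $\tau(-\vartheta_K(z))\lambda_{z,0}$ contributes $+\overline{\lambda}\langle\varphi_t,\vartheta_K\rangle$) bounds the two pieces separately and thereby loses this cancellation, forcing the Gronwall step. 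Both routes are valid; the paper's is shorter precisely because it does not separate the ``$1$'' from the ``$h$''.
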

\begin{proof}
Let $\xi \in \MZ$ be such that $V(\xi) < \infty$. Then there exists a $T > 0$ and a trajectory $\varphi$ on $[0,T]$ such that $\varphi(0) = \xi^*, \varphi(T) = \xi$, and $S_{[0,T]}(\varphi | \xi^*) \leq V(\xi) + 1$. By Theorem~\ref{thm:weak-equation}, there exists a measurable function $h_\varphi$ on $[0,T]\times \E$ such that 

\begin{align}
\langle \varphi_t, f \rangle & = \langle \varphi_0, f\rangle  + \int_{[0,t]} \sum_{(z,z^\prime)\in \E} (f(z^\prime) - f(z)) (1+h_\varphi(u,z,z^\prime)) \lambda_{z,z^\prime}(\varphi_u) \varphi_u(z) du
\label{eqn:weak-eqn-xi-km}
\end{align}
holds for all $t \in [0,T]$ and $f \in C_0(\Z)$, and $S_{[0,T]}(\varphi|\varphi(0))$ is given by
\begin{align*}
S_{[0,T]}(\varphi|\varphi(0)) = \int_{[0,T]} \sum_{(z,z^\prime)\in \E} \tau^*(h_\varphi(t,z,z^\prime)) \lambda_{z,z^\prime}(\varphi_t) \varphi_t(z) dt.
\end{align*}
For any $x \geq 0$ and $y \in \R$, using the convex duality relation $(x-1)y \leq \tau^*(x-1)+\tau(y)$, we get the inequality $xy \leq \tau^*(x-1) + (\exp\{y\}-1)$. Hence, from the above non-variational representation for $S_{[0,T]}(\varphi | \varphi(0))$,~\eqref{eqn:weak-eqn-xi-km} implies
\begin{align}
\langle \varphi_t, f \rangle & \leq \langle \xi^*, f\rangle  + \int_{[0,t]}  \sum_{(z,z^\prime)\in \E} \tau^*(h_\varphi(u,z,z^\prime)) \lambda_{z,z^\prime}(\varphi_u) \varphi_u(z) du \nonumber \\
& \qquad  +  \int_{[0,t]}  \sum_{(z,z^\prime)\in \E} (\exp\{f(z^\prime) - f(z)\}-1) \lambda_{z,z^\prime}(\varphi_u) \varphi_u(z) du \nonumber \\
& \leq \langle \xi^*, f\rangle  + V(\xi) + 1 \nonumber \\
& \qquad  +  \int_{[0,t]}  \sum_{(z,z^\prime)\in \E} (\exp\{f(z^\prime) - f(z)\}-1) \lambda_{z,z^\prime}(\varphi_u) \varphi_u(z) du.
\label{eqn:compactness-v-km-series1}
\end{align}
Recall the function $\vartheta$ on $\Z$. For $n \geq 1$, define
\begin{align*}
\vartheta_n(z) = \left\{
\begin{array}{ll}
\vartheta(z), & \text{ if } z \leq n,\\
0, & \text{ otherwise}.
\end{array}
\right.
\end{align*}
By convexity, note that $\vartheta_n(z+1) - \vartheta_n(z) \leq 1 + \log(z+1)$ and $\vartheta_n(0) - \vartheta_n(z) \leq 0$, for each $z \in \Z$. Therefore, using the upper bound for the  transition rates from assumption~\ref{assm:a2}, observe that
\begin{align*}
\int_{[0,t]}  \sum_{(z,z^\prime)\in \E} (\exp\{\vartheta_n(z^\prime) - \vartheta_n(z)\}-1) \lambda_{z,z^\prime}(\varphi_u) \varphi_u(z) du  \leq \overline{\lambda}(e-1)t,
\end{align*}
for each $t \in [0,T]$ and $n \geq 1$. It follows from~\eqref{eqn:compactness-v-km-series1} with $f$ replaced by $\vartheta_n$ that
\begin{align*}
\langle \varphi_t, \vartheta_n \rangle \leq \langle \xi^*, \vartheta_n \rangle + V(\xi) + 1 +  \overline{\lambda}(e-1)T
\end{align*}
for each $t \in [0,T]$ and $n \geq 1$. Letting $n \to \infty$ and using monotone convergence, we conclude that
\begin{align}
\sup_{t \in [0,T]}  \langle \varphi_t, \vartheta \rangle  = \sup_{t \in [0,T]} \lim_{n\to \infty}  \langle \varphi_t, \vartheta_n \rangle \leq \langle \xi^*, \vartheta \rangle + V(\xi) + 1 + \overline{\lambda}(e-1)T.
\label{eqn:compactness-v-km-series2}
\end{align}
In particular, $\langle \xi, \vartheta \rangle \leq \langle \xi^*, \vartheta \rangle + V(\xi) + 1 + \overline{\lambda}(e-1)T$. It follows that $\xi \in \K$.

Conversely, let $\xi \in \K$. Let $M> 0$ be such that $\xi \in \K_M$. By Lemma~\ref{lemma:connection-delta0-xi}, there exists a $T > 0$ and a trajectory $\varphi^{(2)}$ on $[0,T]$ such that $\varphi^{(2)}(0) = \delta_0$, $\varphi^{(2)}(T) = \xi$, and $S_{[0,T]}(\varphi^{(2)} | \delta_0) \leq C_M$ for some constant $C_M > 0$ depending on $M$.  Let $t_0 = 0$, $t_z = \sum_{z^\prime =1}^{z}\xi^*(z^\prime)$, $z \in \Z \setminus\{0\}$, and $T_1 = \sum_{z^\prime \neq 0}\xi^*(z^\prime)$.  We now construct another trajectory $\varphi^{(1)}$ on $[0,T_1]$ such that $\varphi^{(1)}(0) = \xi^*$, $\varphi^{(1)}(T_1) = \delta_0$, and $S_{[0,T_1]}(\varphi^{(1)}| \xi^*) < \infty$. This trajectory is constructed using piecewise constant velocity paths and its cost $S_{[0,T_1]}(\varphi^{(1)}| \xi^*)$ is computed using arguments similar to those used in the proof of Lemma~\ref{lemma:connection-delta0-xi}; we provide the details here for completeness.  When $t \in  (t_{z-1}, t_z]$ for some $z \in \Z \setminus \{0\}$, let
\begin{align*}
\dot{\varphi}^{(1)}_t(l) = \left\{
\begin{array}{ll}
-1, & \text{ if } l = z, \\
1, & \text{ if } l = 0, \\
0, & \text{ otherwise},
\end{array}
\right.
\end{align*}
$l \in \Z$, and define $\varphi^{(1)}_t(l) = \varphi^{(1)}_0(l) +  \int_{[0,t]} \dot{\varphi}^{(1)}_u(l) du$, $l \in \Z$, $t \in [0,T_1]$. Note that, for each $\alpha \in \CBZ$,  when $t \in (t_{z-1}, t_z)$, we have
\begin{align*}
\biggr\{ \langle \alpha, & \dot{\varphi}^{(1)}_t  - \Lambda_{\varphi^{(1)}_t}^*\varphi^{(1)}_t \rangle- \sum_{(z,z^\prime) \in \mathcal{E}} \tau(\alpha(z^\prime) - \alpha(z)) \lambda_{z,z^\prime}(\varphi^{(1)}_t) \varphi^{(1)}_t(z) \biggr\} \\
& = (\alpha(0) - \alpha(z)) - (\exp\{\alpha(0) - \alpha(z)\}-1) \lambda_{z,0}(\varphi^{(1)}_t) \varphi^{(1)}_t(z) \\
& \qquad - \sum_{(z_0,z^\prime) \in \mathcal{E}: (z_0,z^\prime) \neq (z,0)} (\exp\{\alpha(z^\prime) - \alpha(z_0)\}-1) \lambda_{z_0,z^\prime}(\varphi^{(1)}_t) \varphi^{(1)}_t(z_0) \biggr\}, 
\end{align*}
so that optimising the left hand side of the above display over $\alpha	\in \CBZ$ yields
\begin{align*}
\sup_{\alpha \in \CBZ}\biggr\{ \langle \alpha, & \dot{\varphi}^{(1)}_t  - \Lambda_{\varphi^{(1)}_t}^*\varphi^{(1)}_t \rangle- \sum_{(z,z^\prime) \in \mathcal{E}} \tau(\alpha(z^\prime) - \alpha(z)) \lambda_{z,z^\prime}(\varphi^{(1)}_t) \varphi^{(1)}_t(z) \biggr\} \\
& \leq \log\left(\frac{1}{ \varphi^{(1)}_t(z) \lambda_{z,0}(\varphi^{(1)}_t)}\right) + 2\bar{\lambda} \\
& \leq \log\left(\frac{1}{ \varphi^{(1)}_t(z)}\right) + \log\left(\frac{1}{\underline{\lambda}}\right) + 2\overline{\lambda},
\end{align*}
where the last inequality follows form the lower bound on the backward transition rates in assumption~\ref{assm:a2}. Integrating the above over $(t_{z-1}, t_z)$ and summing over $z \in \Z \setminus \{0\}$, we arrive at
\begin{align}
S_{[0,T_1]}(\varphi^{(1)} | \xi^* ) \leq \sum_{z \in \Z \setminus\{0\}} \left\{\xi^*(z) \log \frac{1}{\xi^*(z)} + \xi^*(z)\left( 1 + \log\left(\frac{1}{\underline{\lambda}}\right) + 2\overline{\lambda}\right) \right\}. \label{eqn:cost-xistar-bound}
\end{align}
Since $\xi^* \in \K$, proceeding via the steps in~\eqref{eqn:boundzlogz}, we conclude that the right hand side of the above display is finite.  We combine $\varphi^{(1)}$ and $\varphi^{(2)}$ and define a new trajectory	$\tilde{\varphi}$ on $[0,T_1+T]$ as follows: $\tilde{\varphi}(t) = \varphi^{(1)}(t)$ on $t \in [0, T_1]$; $\tilde{\varphi}(t) = \varphi^{(2)}(t-T_1)$ on $t \in (T_1, T_1+T]$. Note that $\tilde{\varphi}(0) = \xi^*$, $\tilde{\varphi}(T_1+T) = \xi$, and $S_{[0,T_1+T]}(\tilde{\varphi} | \xi^*) < \infty$. Hence $V(\xi) < \infty$.

To prove the second statement, we note that given any $M > 0$, for any $\xi \in \K_M$, the cost of the trajectory $\tilde{\varphi}$ constructed in the previous paragraph is bounded above by a constant depending only on $M$ (and not on $\xi$). This completes the proof of the lemma.
\end{proof}
\subsection{Continuity}
We now establish a certain continuity property of the quasipotential $V$. Since $V$ has compact level sets and the space $\MZ$ is not locally compact, we cannot expect $V$ to be continuous on $\MZ$. In fact, for any point $\xi \in \MZ$ with $V(\xi) < \infty$, one can produce a sequence $\{\xi_n, n\geq 1\}$ such that $\xi_n \to \xi$ in $\MZ$ as $n \to \infty$, and $\langle \xi_n, \vartheta \rangle = \infty$ for all $n \geq 1$,  so that $\inf_{n \geq 1} V(\xi_n) = \infty$.  We prove that $V$ is continuous under the convergence of $\vartheta$-moments when it is restricted to $\K$. That is, when $\xi_n, \xi \in \K$, $\xi_n \to \xi$ in $\MZ,$ and $\langle \xi_n, \vartheta \rangle \to \langle \xi, \vartheta\rangle $ as $n \to \infty$, then $V(\xi_n) \to V(\xi)$ as $n \to \infty$. Towards this, we produce a trajectory that connects $\xi$ to $\xi_n$ by first moving the mass from all the large enough states $z$ back to the state $0$, then producing a constant velocity trajectory that fills the required mass from state $0$ to all the large enough states $z$, and finally adjusting mass within a finite subset of $\Z$ to reach $\xi_n$. We show that the cost of the trajectory constructed above can be made arbitrarily small for large enough $n$.
\begin{lemma}
Let $\xi_n \in \K$, $n \geq 1$, and $\xi \in \K$. Suppose that $\xi_n \to \xi$ in $\MZ$ and $\langle \xi_n, \vartheta \rangle \to \langle \xi, \vartheta \rangle$ as $n \to \infty$. Then $V(\xi_n) \to V(\xi)$ as $n \to \infty$.
\label{lemma:v-continuity}
\end{lemma}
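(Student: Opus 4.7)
The plan is to establish $\lim_{n\to\infty} V(\xi_n) = V(\xi)$ by constructing, for every sufficiently large $n$, a ``bridge'' trajectory from $\xi$ to $\xi_n$ (and symmetrically one from $\xi_n$ to $\xi$) whose cost can be made arbitrarily small as $n \to \infty$. Once such bridges are in hand, concatenating them with $\eta$-optimal trajectories from $\xi^*$ to $\xi$ and from $\xi^*$ to $\xi_n$ yields $V(\xi_n) \leq V(\xi) + o(1)$ and $V(\xi) \leq V(\xi_n) + o(1)$, and sending $\eta \to 0$ finishes the proof.

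To construct the bridge from $\xi$ to $\xi_n$, I would mimic the three-phase piecewise-constant velocity construction used in Lemma~\ref{lemma:connection-delta0-xi}. Fix a large truncation $N$ depending on a tolerance $\varepsilon > 0$. In Phase 1, collapse the tail mass $\{\xi(z) : z > N\}$ to state $0$ by transporting along the backward edges $(z, 0)$ at unit velocity, handling the states one at a time in some order. In Phase 2, build up the tail $\{\xi_n(z) : z > N\}$ from state $0$ using the piecewise-constant velocity forward transport along $(0,1), (1,2), \ldots$ exactly as in Lemma~\ref{lemma:connection-delta0-xi}. In Phase 3, since the intermediate profile now agrees with $\xi_n$ outside $\{0, 1, \ldots, N\}$, I correct the remaining mismatch on $\{0, 1, \ldots, N\}$ by routing a total mass of order $d(\xi_n, \xi)$ along admissible edges in $\E$, which is possible since this finite subgraph is connected via $0$.

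For the cost bound, I would re-use the estimates in the proof of Lemma~\ref{lemma:connection-delta0-xi}: the contribution of transporting a mass $\xi(z)$ along a single edge to the integrand in~\eqref{eqn:rate-function-finite-duration} is, up to constants depending on $\underline{\lambda}$ and $\overline{\lambda}$, controlled by $(z\log z)\xi(z) + \xi(z)\log(1/\xi(z))$, and both of these are dominated by the tail $\vartheta$-moment via the splitting argument in~\eqref{eqn:boundzlogz}. The hypothesis $\xi_n \to \xi$ in $\MZ$ is equivalent to $\xi_n(z) \to \xi(z)$ for each $z \in \Z$ (Scheff\'e); combining this with $\langle \xi_n, \vartheta\rangle \to \langle \xi, \vartheta\rangle$ and subtracting the finite sum $\sum_{z \leq N} \vartheta(z)\xi_n(z)$ gives $\sum_{z > N} \vartheta(z) \xi_n(z) \to \sum_{z > N}\vartheta(z)\xi(z)$ as $n \to \infty$, and the latter can be made arbitrarily small by choosing $N$ large. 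Hence Phases 1 and 2 contribute at most $\varepsilon/2$ uniformly for $n$ large. Phase 3 acts on a bounded number of coordinates and transports only mass of order $d(\xi_n, \xi) \to 0$, so a direct estimate (akin to the first paragraph of the proof of Lemma~\ref{lemma:v-finiteness}) bounds its cost by $o(1)$ as $n \to \infty$.

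The main obstacle will be Phase 2: controlling the tail forward transport uniformly in $n$. Without the $\vartheta$-moment convergence hypothesis, $\vartheta$-mass could leak to infinity along the sequence $\{\xi_n\}$, and the $(z \log z)\xi_n(z)$ contributions would become uncontrollable; this is the step where the assumption $\langle \xi_n, \vartheta\rangle \to \langle \xi, \vartheta\rangle$ is indispensable. A further subtlety is the bookkeeping needed to ensure that at no intermediate time does any state carry negative mass, which dictates the serial order of the three phases, and the choice in Phase 3 to route every correction either through a forward edge or through a direct return to $0$, which is the only option permitted by~\ref{assm:a1}.
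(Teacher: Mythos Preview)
Your proposal is correct and follows essentially the same approach as the paper: a bridge from $\xi$ to $\xi_n$ built by (i) collapsing the tail of $\xi$ beyond a truncation level to state $0$, (ii) rebuilding the tail of $\xi_n$, and (iii) correcting the finite-state mismatch, with the cost of (i)--(ii) controlled by tail $\vartheta$-moments via exactly your observation that $\sum_{z>z_0}\vartheta(z)\xi_n(z)\to\sum_{z>z_0}\vartheta(z)\xi(z)$, and (iii) costing $O(\tilde\varepsilon_n\log(1/\tilde\varepsilon_n))$. The paper's only additional wrinkle is an explicit sub-phase between (i) and (ii) that tops up state $0$ from $\{1,\dots,z_0\}$ when $\sum_{z>z_0}\xi_n(z)$ exceeds the mass available there---precisely the positivity bookkeeping you flagged as a subtlety.
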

\begin{proof}
We first prove that $\limsup_{n \to \infty} V(\xi_n) \leq V(\xi)$.  Fix $\varepsilon > 0$. We shall move from $\xi$ to $\xi_n$ in five steps. The outline of this construction is as follows:
\begin{itemize}
\item $\varphi^{(0)}$: This trajectory  starts with $\xi$ and moves all the mass for all states $z > z_0$, for a suitable large enough $z_0$, back to state $0$. This backward movement results in a cost of $O(\varepsilon)$.
\item $\varphi^{(1)}$: Next, we move any additional mass, if required, from the states $\{1,2,\ldots, z_0\}$ back to state $0$ so that there is enough mass at state $0$ to fill up all the states beyond $z_0$. Again, this  backward movement results in a cost of $O(\varepsilon)$.
\item $\varphi^{(2)}$: Next, we construct a piecewise constant-velocity trajectory to move the mass $\sum_{z^\prime > z_0}\xi_n(z)$ from state $0$ to state $z_0+1$. After this movement, state $z_0+1$ contains all the mass required to fill up the states beyond it. This forward movement results in  a cost of $O(\varepsilon \log (1/\varepsilon))$, instead of $O(\varepsilon)$, because we move the total mass for all the states beyond $z_0$.
\item $\varphi^{(3)}$: Then, for each $z > z_0$, we move the required mass (i.e., $\xi_n(z)$) from state $0$ to state $z$ using a piece-wise constant velocity trajectory. At the end of this procedure, for each $z > z_0$, the mass at state $z$ becomes $\xi_n(z)$. This forward movement results in  a cost of $O(\varepsilon)$. 
\item $\varphi^{(4)}$: Finally, we adjust the mass within the finite set $\{1,2,\ldots, z_0\}$ to match with $\xi_n$. This also results in a cost at most $O(\varepsilon \log (1/\varepsilon))$. Again, this cost is $O(\varepsilon \log (1/\varepsilon))$ instead of $O(\varepsilon)$ because we  move, for each $z \in \{1,2,\ldots, z_0\}$, the sum of the additional  mass (under $\xi_n$ compared to $\xi$) in the states  $\{z, z+1, \ldots, z_0\}$  from state $0$ to state $z$.
\end{itemize}
Therefore, the total cost of all these trajectories is at most $O(\varepsilon \log (1/\varepsilon))$, which vanishes as $\varepsilon \to 0$. We now define these trajectories in detail and evaluate their costs.

Let $z_0 \geq 2$ be such that 
\begin{align*}
\sum_{z > z_0} \vartheta(z) \xi(z) < \varepsilon/6 \quad  \text{and} \quad  \sum_{z > z_0}\frac{\log z}{z^2} < \varepsilon.
\end{align*}
Then choose $n_1 \geq 1$ such that $\sum_{z > z_0} \vartheta(z) \xi_n(z) < \varepsilon/3$ holds for all $n \geq n_1$; this is possible since $\xi_n \to \xi$ in $\MZ$ and $\langle \xi_n, \vartheta \rangle \to \langle \xi, \vartheta \rangle$ as $n \to \infty$. Let 
\begin{align*}
t_{z_0} = 0, \quad t_z = \sum_{z^\prime = z_0+1}^z \xi(z^\prime), \, z > z_0, \quad \text{ and} \quad T_0 = \sum_{z^\prime > z_0} \xi(z^\prime).
\end{align*}
Define the trajectory $\varphi^{(0)}$ on $[0,T_0]$ as follows. When $t \in (t_{z-1}, t_{z}]$ for some $z > z_0$, let
\begin{align*}
\dot{\varphi}^{(0)}_t(l) = \left\{
\begin{array}{ll}
-1, & \text{ if } l = z, \\
1, & \text{ if } l = 0, \\
0, & \text{ otherwise},
\end{array}
\right.
\end{align*} 
$l \in \Z$, and define 
\begin{align*}
\varphi^{(0)}_t(l) = \xi(l) + \int_{[0,t]} \dot{\varphi}^{(0)}_u(l) du, \quad l \in \Z, \, t \in [0, T_0].
\end{align*}
Note that $\varphi^{(0)}_{T_0}(z) = \xi(z)$ for $1 \leq z \leq z_0$, $\varphi^{(0)}_{T_0}(z) = 0$ for $z > z_0$, and $\varphi^{(0)}_{T_0}(0) = \xi(0) + \sum_{z > z_0} \xi(z)$. Let $M = (\sup_{n \geq n_1} \langle \xi_n, \vartheta \rangle) \vee   \langle \xi, \vartheta \rangle + 1$.  Using ideas similar to those used in the proof of Lemma~\ref{lemma:v-finiteness}, it can be checked that $S_{[0,T_0]}(\varphi^{(0)} | \xi) \leq C_0(M, \overline{\lambda}, \underline{\lambda}) \varepsilon$, for some constant $C_1(M, \overline{\lambda}, \underline{\lambda})$ depending on $M$, $\overline{\lambda}$, and  $\underline{\lambda}$.  Indeed, the cost  is $O(\sum_{z > z_0}\xi(z)  \log (1/\xi(z)))$, which, using the argument used to arrive at the bound \eqref{eqn:cost-xistar-bound} and the choice of $z_0$, is bounded by 
\begin{align*}
O\left(\sum_{z > z_0}\left((z \log z)\xi(z) + \frac{\log z}{z^2} \right) \right) = O(\varepsilon).
\end{align*}

Let $\varepsilon_n = \sum_{z > z_0} \xi_n(z)$. If $\varepsilon_n > \varphi_{T_0}^{(0)}(0)$, then we move the extra mass $\varepsilon_n - \varphi_{T_0}^{(0)}(0)$ from the states $\{1,2,\ldots, z_0\}$ to state $0$ as follows. Let $T_1 = T_0 + \varepsilon_n - \varphi_{T_0}^{(0)}(0)$. When $t$ is between $T_0 + \sum_{z^\prime = z+1}^{z_0} \varphi_{T_0}^{(0)}(z^\prime)$ and $(T_0 + \sum_{z^\prime = z}^{z_0} \varphi_{T_0}^{(0)}(z^\prime)) \wedge T_1$ for some $z \leq z_0$, let
\begin{align*}
\dot{\varphi}^{(1)}_t(l) = \left\{
\begin{array}{ll}
-1, & \text{ if } l = z, \\
1, & \text{ if } l = 0,\\
0, & \text{ otherwise},
\end{array}
\right.
\end{align*} 
$l \in \Z$.  Define the trajectory $\varphi^{(1)}$  on $[0, T_1]$ as follows:  $\varphi^{(1)}_t = \varphi^{(0)}_t$ when $t \in [0, T_0]$; $\varphi^{(1)}_t(l) = \varphi^{(0)}_{T_0}(l) +  \int_{[0,t]} \dot{\varphi}^{(1)}_u(l) du$, $l \in \Z$, $t \in (T_0, T_1]$. Note that $\varphi^{(1)}$ depends on $n$, but we suppress this in the notation for ease of readability.   Again, since $\varepsilon_n$ is smaller than $\varepsilon/3$, by using calculations similar to those used in the proof of Lemma~\ref{lemma:v-finiteness}, we see that $S_{[T_0, T_1]}(\varphi^{(1)}|\varphi_{T_0}^{(0)}) \leq C_1(M,\overline{\lambda}, \underline{\lambda}) \varepsilon$ for some constant $C_1(M,\overline{\lambda}, \underline{\lambda})$ depending on $M$, $\overline{\lambda}$, and $\underline{\lambda}$. On the other hand, if $\varepsilon_n \leq \varphi_{T_0}^{(0)}(0)$, we set $T_1 = T_0$ and $\varphi^{(1)}_t = \varphi^{(0)}_t$ on $[0, T_1]$. In both cases, we have $\varphi^{(1)}_{T_1}(0) \geq \varepsilon_n $.

Let $T_2 = (z_0+1) \varepsilon_n$. We now construct another trajectory $\varphi^{(2)}$ on $[0,T_2]$ to transfer the mass $\varepsilon_n$ from state $0$ (in $\varphi^{(1)}_{T_1}$) to state $z_0 + 1$. Let $\varphi^{(2)}_0 = \varphi^{(1)}_{T_1}$. When $t \in ((z-1)\varepsilon_n, z\varepsilon_n]$ for some $z \in \{1,2,\ldots, z_0+1\}$, let
\begin{align*}
\dot{\varphi}^{(2)}_t(l) = \left\{
\begin{array}{ll}
-1, & \text{ if } l = z-1, \\
1, & \text{ if } l = z,\\
0, & \text{ otherwise},
\end{array}
\right.
\end{align*} 
$l \in \Z$, and define $\varphi^{(2)}_t(l) = \varphi^{(1)}_{T_1}(l) +  \int_{[0,t]} \dot{\varphi}^{(2)}_u(l) du$, $l \in \Z$, $t \in (0, T_2]$. Note that $|x \log (\frac{1}{x})-  y \log (\frac{1}{y})| \leq \delta + \delta \log(1/\delta)$ whenever $|x - y| \leq \delta$,  and that  $\varepsilon_n \leq \varepsilon/(z_0 \log z_0)$. Hence,  using calculations similar to those done in the proof of Lemma~\ref{lemma:connection-delta0-xi}, we see that $S_{[0,T_2]}(\varphi^{(2)}|\varphi^{(1)}_{T_1})$ can be bounded above by $C_2(M, \overline{\lambda}, \underline{\lambda})\varepsilon \log (1/\varepsilon)$ where $C_2(M, \overline{\lambda}, \underline{\lambda})$ is a constant depending on $M$, $\overline{\lambda}$, and  $\underline{\lambda}$, for each $n \geq n_1$ (recall that $\varphi^{(2)}$ depends on $n$). Indeed, the cost is bounded by the order of (see the bound in \eqref{eqn:bound-cost-zlogz})
\begin{align*}
z_0 \varepsilon_n \log \left(\frac{1}{\varepsilon_n}\right) + (z_0 \log z_0) \varepsilon_n & \leq z_0 \frac{\varepsilon}{z_0 \log z_0} \log \left(\frac{z_0 \log z_0}{\varepsilon} \right)  + \varepsilon\\
& \leq \varepsilon \log (1/\varepsilon) + 3 \varepsilon,
\end{align*}
where the first inequality uses the fact that $\varepsilon_n \leq \varepsilon/(z_0 \log z_0)$, and the second inequality uses the fact that $z_0 \geq  2$ so that $z_0 \log z_0 >  1$.

Note that $\varphi^{(2)}_{T_2}(z_0+1) = \varepsilon_n$. We now construct a trajectory that  distributes this mass $\varepsilon_n$ from the state $z_0 + 1$ to all the states $z \geq z_0 + 1$ to match with $\xi_n(z)$. Let $t^\prime_z = z \xi_n(z)$ for $z \geq z_0+2$ and $T_3 =  \sum_{z \geq z_0+2} t^\prime_z$. Similar to the construction in the proof of Lemma~\ref{lemma:connection-delta0-xi}, we can now construct a trajectory $\varphi^{(3)}$ on $[0,T_3]$ such that $\varphi^{(3)}_0 = \varphi^{(2)}_{T_2}$, $\varphi^{(3)}_{T_3}(z) = \xi_n(z)$ for each $z \geq z_0+1$, and $S_{[0,T_3]}(\varphi^{(3)}|\varphi^{(2)}_{T_2}) \leq C_3(M, \overline{\lambda}, \underline{\lambda}) \varepsilon$ for some constant $C_3(M, \overline{\lambda}, \underline{\lambda})$ depending on $M$, $\overline{\lambda}$, and  $\underline{\lambda}$, for all $n \geq n_1$. Indeed, using the bounds in  \eqref{eqn:bound-cost-zlogz} and \eqref{eqn:boundzlogz}, the total cost is bounded by the order of 
\begin{align*}
\sum_{z > z_0 + 1} \left( (z \log z) \xi_n(z) + \frac{\log z}{z^2} \right) \leq \frac{\varepsilon}{3} + \varepsilon,
\end{align*}
where the inequality follows from the choice of $z_0$.

Finally, we construct a trajectory that connects $\varphi^{(3)}_{T_3}$ to $\xi_n$ by adjusting the mass within the states $\{0,1,\ldots, z_0\}$. Note that $\varphi^{(3)}_{T_3}(z) = \xi_n(z)$ for each $z \geq z_0+1$. Let $\Z_0 \subset \{1,2,\ldots, z_0\}$ denote the set of all $z \in \{1,2,\ldots, z_0\}$ such that $\varphi^{(3)}_{T_3}(z) > \xi_n(z)$. Similar to the construction of $\varphi^{(1)}$, for each $z \in \Z_0$, we move the mass $\varphi^{(3)}_{T_3}(z) - \xi_n(z)$ from state $z$ to state $0$ using unit velocity over a time duration $\varphi^{(3)}_{T_3}(z) - \xi_n(z)$. Once these mass transfers are complete, starting with $z  =1$, we move the mass 
\begin{align*}
\sum_{z^\prime \geq z, z^\prime \notin \Z_0, z^\prime \leq z_0} (\xi_n(z^\prime) - \varphi_{T_3}^{(3)}(z^\prime))
\end{align*}
from state $z-1$ to state $z$ with at unit rate. Let 
\begin{align*}
T_4 =  \sum_{z \in \Z_0}(\varphi^{(3)}_{T_3}(z) - \xi_n(z)) + \sum_{z \notin \Z_0, z \leq z_0} (\xi_n(z) - \varphi^{(3)}_{T_3}(z)),
\end{align*}
and let $\varphi^{(4)}$ denote this piecewise constant velocity trajectory. Let $\tilde{\varepsilon}_n
 = \sum_{z \notin \Z_0, z \leq z_0} (\xi_n(z) - \varphi_{T_3}^{(3)}(z))$.
At each step of $\varphi^{(4)}$, since we move a mass of at most $\tilde{\varepsilon}_n$ from state $z-1$ to state $z$, the cost of $\varphi^{(4)}$ is at most of the order of (see \eqref{eqn:bound-cost-zlogz})
\begin{align*}
z_0 \tilde{\varepsilon}_n \log \left(\frac{1}{\tilde{\varepsilon}_n}\right) + (z_0 \log z_0) \tilde{\varepsilon}_n.
\end{align*}
Since $\tilde{\varepsilon}_n \to 0$ as $n \to \infty$, we may choose $n_2 \geq n_1$ so that $\tilde{\varepsilon}_n \leq \varepsilon/(z_0 \log z_0)$ for all $n \geq n_2$. Therefore, for $n \geq n_2$, the above display is bounded by
\begin{align*}
z_0 \frac{\varepsilon}{z_0 \log z_0} \log \left(\frac{z_0 \log z_0}{\varepsilon}\right)+\varepsilon \leq \varepsilon \log (1/\varepsilon) + 3 \varepsilon,
\end{align*}
which is $O(\varepsilon \log (1/\varepsilon))$. Therefore, $S_{[0,T_4]}(\varphi^{(4)} | \varphi^{(3)}_{T_3}) \leq C_4(M, \overline{\lambda}, \underline{\lambda}) \varepsilon \log (1/\varepsilon)$ for all $n \geq n_2$, for some constant $C_4(M, \overline{\lambda}, \underline{\lambda})$ depending on $M$, $\overline{\lambda}$, and  $\underline{\lambda}$.

Let $T = \sum_{i=1}^4 T_i$. We now append the four paths $\varphi^{(i)}, 1 \leq i \leq 4$, constructed in the previous paragraphs over the time duration $[0,T]$ to get a path $\varphi$ such that $\varphi_0 = \xi$, $\varphi_T = \xi_n$ and $S_{[0,T]}(\varphi | \xi) \leq C(M, \overline{\lambda}, \underline{\lambda}) \varepsilon \log (1/\varepsilon)$ where $C(M, \overline{\lambda}, \underline{\lambda})$ is a constant depending on $M$, $\overline{\lambda}$ and $\underline{\lambda}$. Hence, for each $n \geq n_2$, we have
\begin{align*}
V(\xi_n) \leq V(\xi) + S_{[0,T_4]}(\varphi | \xi) \leq V(\xi) + C(M, \overline{\lambda}, \underline{\lambda}) \varepsilon \log (1/\varepsilon).
\end{align*}
Therefore, $\limsup_{n \to \infty} V(\xi_n) \leq V(\xi) + C(M, \overline{\lambda}, \underline{\lambda}) \varepsilon \log (1/\varepsilon)$. Letting $\varepsilon \to 0$ and noting that $\varepsilon \log (1/\varepsilon) \to 0$, we arrive at $\limsup_{n \to \infty} V(\xi_n) \leq V(\xi)$. 

To prove $\liminf_{n \to \infty} V(\xi_n) \geq V(\xi)$, we reverse the role of $\xi_n$ and $\xi$ in the above argument. That is, we construct a trajectory $\varphi$ on $[0,T]$ such that $\varphi_0 = \xi_n$, $\varphi_T = \xi$, and $S_{[0,T]}(\varphi | \xi_n) \leq \varepsilon_n$ for all $n \geq 1$, where $\varepsilon_n \to 0$ as $n \to \infty$. Thus, we get
\begin{align*}
V(\xi) \leq V(\xi_n) + \varepsilon_n.
\end{align*}
Letting $n \to \infty$, we conclude that $\liminf_{n \to \infty} V(\xi_n) \geq V(\xi)$. This completes the proof of the lemma.
\end{proof}
\begin{remark}
\label{remark:lscV}
The choice of $n_1$ in the above proof suggests that the inequality $\limsup_{n \to\infty} V(\xi_n) \leq V(\xi)$ can be proved as long as $\xi_n \to \xi$ in $\MZ$ as $n \to \infty$ and  $\limsup_{n \to \infty} \langle \xi_n, \vartheta \rangle \leq \langle \xi, \vartheta \rangle$ holds. Similarly, the inequality $\liminf_{n \to\infty} V(\xi_n) \geq V(\xi)$ can be proved as long as  $\xi_n \to \xi$ in $\MZ$ and $\liminf_{n \to \infty} \langle \xi_n, \vartheta \rangle \geq \langle \xi, \vartheta \rangle$ holds. This observation will be later used in the proof of the compactness of the lower level sets of $V$.
\end{remark}

\subsection{Compactness of the lower level sets of the quasipotential}
Define the level sets of $V$ by
\begin{align*}
\Xi(s) \coloneqq \{\xi \in \MZ: V(\xi) \leq s\}, \, s > 0.
\end{align*}	
In this section we establish the compactness of $\Xi(s)$ for each $s > 0$.
\begin{lemma}
For each $s > 0$, $\Xi(s)$ is a compact subset of $\MZ$.
\label{lemma:v-compactness-level-sets}
\end{lemma}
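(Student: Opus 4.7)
The plan is to combine exponential tightness of $\{\wp^N, N \geq 1\}$ (Proposition~\ref{prop:invariant-measure-exp-tightness}) with the LDP lower bound (Lemma~\ref{lemma:lower-bound-specific-path}) to locate $\Xi(s)$ inside some $\K_M$, and then to establish closedness of $\Xi(s)$ via the lower-semicontinuity of $V$ that follows from Remark~\ref{remark:lscV}. Since $\K_M$ is compact in $\MZ$, this is enough.

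First, given $s > 0$, I would appeal to exponential tightness to choose $M = M(s) > 0$ so that $\wp^N(\sim\hspace{-0.4em}\K_M) \leq \exp\{-N(s+2)\}$ for all large enough $N$. Suppose, for contradiction, that some $\xi$ with $V(\xi) \leq s$ lies outside $\K_M$. Because $\zeta \mapsto \langle \zeta, \vartheta \rangle$ is lower-semicontinuous on $\MZ$ (Fatou's lemma applied to the nonnegative function $\vartheta$), the set $\K_M$ is closed, so there is $\delta > 0$ with $\{\zeta : d(\xi,\zeta) < \delta\} \cap \K_M = \emptyset$. Applying Lemma~\ref{lemma:lower-bound-specific-path} with $\gamma = 1$ then gives $\wp^N\{\zeta : d(\xi,\zeta) < \delta\} \geq \exp\{-N(s+1)\}$ for all large $N$, contradicting the exponential-tightness estimate $\wp^N\{\zeta : d(\xi,\zeta) < \delta\} \leq \wp^N(\sim\hspace{-0.4em}\K_M) \leq \exp\{-N(s+2)\}$. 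Hence $\Xi(s) \subseteq \K_M$.

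Next, I would verify that $\Xi(s)$ is closed in $\MZ$. Let $\xi_n \in \Xi(s)$ with $\xi_n \to \xi$ in $\MZ$. By the previous step $\xi_n \in \K_M$, and closedness of $\K_M$ forces $\xi \in \K_M \subseteq \K$, so $V(\xi) < \infty$ by Lemma~\ref{lemma:v-finiteness}. Fatou's lemma yields $\langle \xi, \vartheta \rangle \leq \liminf_{n} \langle \xi_n, \vartheta \rangle$, which is precisely the hypothesis Remark~\ref{remark:lscV} requires in order to conclude that $\liminf_n V(\xi_n) \geq V(\xi)$. Therefore $V(\xi) \leq s$, i.e., $\xi \in \Xi(s)$. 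Being a closed subset of the compact set $\K_M$, $\Xi(s)$ is compact.

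The main potential obstacle is that a direct proof of lower-semicontinuity of $V$ on $\MZ$ is subtle, since $V$ is genuinely discontinuous at points where the $\vartheta$-moment can escape to infinity along convergent sequences. The remedy is that Remark~\ref{remark:lscV} extracts from the construction in Lemma~\ref{lemma:v-continuity} a one-sided statement that requires only the Fatou inequality for $\vartheta$-moments, which is free along any convergent sequence in $\MZ$. The truly new input is therefore the containment $\Xi(s) \subseteq \K_M$, and this is what the LDP lower bound combined with exponential tightness supplies.
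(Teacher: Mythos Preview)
Your proposal is correct and follows essentially the same approach as the paper: the paper likewise combines the exponential tightness of $\{\wp^N\}$ with the LDP lower bound of Lemma~\ref{lemma:lower-bound-specific-path} to obtain the containment $\Xi(s)\subset\K_{M'}$, and then invokes Fatou's lemma together with Remark~\ref{remark:lscV} to prove closedness. The only cosmetic difference is that the paper phrases the first step as the general inclusion $\{\xi:V(\xi)\leq M\}\subset\K_{M'}$ via $\liminf$/$\limsup$ of $\tfrac{1}{N}\log\wp^N(\sim\hspace{-0.4em}\K_{M'})$, whereas you argue by contradiction at a single point; the content is the same.
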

\begin{proof}
We first prove an inclusion property of the level sets of $V$, namely, given $M> 0$ there exists $M^\prime > 0$ such that 
\begin{align}
 \{\xi \in \MZ: V(\xi) \leq M\} \subset \K_{M^\prime}.
\label{eqn:v-km-containment}
\end{align}
On one hand, using Proposition~\ref{prop:invariant-measure-exp-tightness} on the exponential tightness of the family $\{\wp^N, N \geq 1\}$, choose $M^\prime > 0$ (see~\eqref{eqn:exp-tightness-mprime-choice}) such that 
\begin{align*}
\limsup_{N \to \infty} \frac{1}{N} \log \wp^N(\sim \hspace{-0.4em} \K_{M^\prime}) \leq -(M+1).
\end{align*}
On the other hand, using the LDP lower bound established in Lemma~\ref{lemma:lower-bound-specific-path} and the compactness of $\K_{M^\prime}$, we have 
\begin{align*}
\liminf_{N \to \infty} \frac{1}{N} \log \wp^N(\sim \hspace{-0.4em} \K_{M^\prime}) \geq -\inf_{\xi \notin  \K_{M^\prime}} V(\xi).
\end{align*}
Combining the above two displays, we get
\begin{align*}
-\inf_{\xi \notin \K_{M^\prime}} V(\xi) \leq \liminf_{N \to \infty} \frac{1}{N} \log \wp^N(\sim \hspace{-0.4em} \K_{M^\prime}) \leq \limsup_{N \to \infty} \frac{1}{N} \log \wp^N(\sim \hspace{-0.4em} \K_{M^\prime}) \leq -(M+1).
\end{align*}
That is, $\xi \notin \K_{M^\prime}$ implies $V(\xi) \geq  M+1 > M$. This shows~\eqref{eqn:v-km-containment}. By Prohorov's theorem, $\K_M$ is a compact subset of $\MZ$; hence~\eqref{eqn:v-km-containment} shows that $\Xi(s)$ is precompact for each $s > 0$.

We now show that $\Xi(s)$ is closed in $\MZ$. Let $\xi_n \in \Xi(s)$ for each $n \geq 1$ and let $\xi_n \to \xi$ in $\MZ$ as $n \to \infty$. By Fatou's lemma, we have $\liminf_{n \to \infty} \langle \xi_n, \vartheta \rangle \geq \langle \xi, \vartheta \rangle$. Hence, by Remark~\ref{remark:lscV}, we have $\liminf_{n \to \infty} V(\xi_n) \geq V(\xi)$. Thus, $\xi \in \Xi(s)$. This completes the proof of the lemma.
\end{proof}
\section{The LDP upper bound}
\label{section:ldp-upper-bound}
Recall $\K_M$ defined in \eqref{eqn:KM} and $K(\Delta)$ defined in \eqref{eqn:KD}. For $m \in \N$, define
\begin{align*}
\mathscr{S}_m(\Delta, M) = \{\varphi \in D([0,m],\MZ) : \varphi(0) \in \K_M, \varphi(n) \notin K(\Delta) \text{ for all } n = 1,2,\ldots , m\}.
\end{align*}
That is, $\mathscr{S}_m(\Delta, M)$ denotes the set of all trajectories that start at $\K_M$ and do not intersect $K(\Delta)$ at all integer time points in $[0,m]$. We begin with a lemma that asserts that the elements of $\mathscr{S}_m(\Delta, M)$ for large enough $m$ must have non-trivial cost. The key idea used in the proof comes from the compactness of level sets of the process-level large deviations rate function $S_{[0,T]}(\cdot | \nu), \nu \in K$, for any compact subset $K$ of $\MZ$ (see Lemma~\ref{lemma:compactness-S}).
\begin{lemma}
\label{lemma:upper-bound-cost-lower-bound}
For any $s > 0$, $M > 0$, and $\Delta > 0$, there exists $m_0 \in \N$ such that 
\begin{align}
\inf\{S_{[0,m_0]}(\varphi | \varphi(0)), \varphi \in \mathscr{S}_{m_0}(\Delta, M)\} > s.
\label{eqn:upper-bound-cost-lower-bound}
\end{align}
\end{lemma}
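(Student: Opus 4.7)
The plan is to argue by contradiction, using compactness of the process-level level sets together with the asymptotic stability of $\xi^*$. Suppose~\eqref{eqn:upper-bound-cost-lower-bound} fails for every $m_0 \in \N$: then for each $m$ there is $\varphi^m \in \mathscr{S}_m(\Delta, M)$ with $S_{[0,m]}(\varphi^m | \varphi^m(0)) \leq s$. For any fixed $T > 0$, the restrictions $\{\varphi^m|_{[0,T]} : m \geq T\}$ lie in the compact set supplied by Lemma~\ref{lemma:compactness-S}; a Cantor-diagonal extraction over $T = 1, 2, \ldots$ produces a subsequence $\{\varphi^{m_k}\}$ and a limit $\varphi \in D([0,\infty), \MZ)$ with $\varphi^{m_k} \to \varphi$ in $D([0,T], \MZ)$ for every $T > 0$, and lower semicontinuity of $S_{[0,T]}(\cdot \mid \cdot)$ gives $\sum_{n=0}^{\infty} S_{[n,n+1]}(\varphi | \varphi(n)) \leq s$.

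Since this series converges, for every $\eta > 0$ and every $L \in \N$ there are arbitrarily large blocks of $L$ consecutive unit intervals on which the cost of $\varphi$ is below $\eta$. A standard compactness argument using Lemma~\ref{lemma:compactness-S} (a zero-cost trajectory from a given initial condition coincides with the mean-field solution) shows that small cost on such a block starting from a point of any prescribed compact set forces the trajectory to be uniformly $\rho$-close to the mean-field solution from the same initial condition. Assumptions~\ref{assm:gase} and \ref{assm:zlogz-convergence} provide $T_0 > 0$ such that, for $\nu$ in any prescribed compact subset of $\MZ$, $\mu_\nu(T_0) \in K(\Delta/4)$ (convergence both in $d$ and in $\vartheta$-moment). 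Combining these facts, once $\varphi$ begins such a low-cost block of length $\lceil T_0 \rceil$ from a point of bounded $\vartheta$-moment, it lands in $K(\Delta/2)$ at the terminal integer time $n^\star$ of that block. Transferring this back to $\varphi^{m_k}$ via the Skorohod convergence at $n^\star$ and the $\vartheta$-moment control described below places $\varphi^{m_k}(n^\star) \in K(\Delta)$ for all sufficiently large $k$, contradicting $\varphi^{m_k}(n^\star) \notin K(\Delta)$.

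The principal obstacle is that $\xi \mapsto \langle \xi, \vartheta\rangle$ is only lower semicontinuous on $\MZ$, so both the transfer of $\varphi(n^\star) \in K(\Delta/2)$ back to $\varphi^{m_k}(n^\star) \in K(\Delta)$ and the identification of $\varphi(n)$ as lying in a fixed compact set of bounded $\vartheta$-moment require a uniform-in-$m$ bound; the horizon-dependent estimate~\eqref{eqn:compactness-v-km-series2} used in the proof of Lemma~\ref{lemma:v-finiteness} is insufficient because it grows linearly in $T$. To remedy this I would establish $\sup_{0 \leq t \leq m}\langle \varphi^m(t), \vartheta\rangle \leq C(M, s)$ uniformly in $m$ via a Lyapunov/contraction estimate that exploits the strict positivity $\lambda_{z,0}(\xi) \geq \underline{\lambda}$ from assumption~\ref{assm:a2}: the backward edges $(z, 0)$ furnish a contracting drift for $\langle \cdot, \vartheta\rangle$ under the unperturbed dynamics, and a convex-duality argument in the spirit of the non-variational representation in Theorem~\ref{thm:weak-equation} trades the cost of forward-edge control against this drift to yield a one-step recursion of the form $\langle \varphi^m(n+1), \vartheta\rangle \leq \kappa \langle \varphi^m(n), \vartheta\rangle + C_0 + C_1 S_{[n,n+1]}(\varphi^m | \varphi^m(n))$ with $\kappa < 1$, which iterates to the uniform bound. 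With this bound in place the compactness of $\{\varphi^{m_k}(n^\star)\}_k$ in $\K_{C(M,s)}$ is available, the $\vartheta$-moment convergence required in the previous paragraph can be extracted along a further subsequence, and the contradiction closes.
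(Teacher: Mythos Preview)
Your overall strategy (contradiction, compactness of level sets, asymptotic stability) is reasonable, but the execution diverges substantially from the paper's and contains a genuine gap.

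\textbf{The paper's route is much shorter.} The paper gets the uniform $\vartheta$-moment bound $\sup_{t\in[0,m]}\langle\varphi_m(t),\vartheta\rangle\le M_1$ in one line, by observing that $V(\varphi_m(t))\le C_M+1+s+\varepsilon_m$ (prefix a cheap path $\xi^*\to\varphi_m(0)$ from Lemma~\ref{lemma:v-finiteness}, then follow $\varphi_m$) and invoking the already-proved containment~\eqref{eqn:v-km-containment}. No Lyapunov recursion is needed. With this bound in hand, the paper never extracts a limit and never transfers back: it simply shows that the infimum cost $\beta$ over any length-$T_1$ block that starts in $\K_{M_1}$ and avoids $K(\Delta)$ at integer times is strictly positive (via Lemma~\ref{lemma:compactness-S}), and then sums $\lfloor m/T_1\rfloor$ such blocks to get $S_{[0,m]}(\varphi_m\mid\varphi_m(0))\ge\lfloor m/T_1\rfloor\beta\to\infty$.

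\textbf{Your Lyapunov recursion is false as stated.} Take $\xi_0\in\MZ$ with $\langle\xi_0,\vartheta\rangle=M$ arbitrary and let $\varphi$ be the constant trajectory $\varphi_t\equiv\xi_0$ on $[0,1]$. The non-variational representation forces $h_\varphi\equiv-1$ on every edge, so $S_{[0,1]}(\varphi\mid\xi_0)=\sum_{(z,z')\in\E}\lambda_{z,z'}(\xi_0)\xi_0(z)\le 2\overline{\lambda}$ by~\ref{assm:a2}. Plugging into your recursion gives $M\le\kappa M+C_0+2C_1\overline{\lambda}$, which fails for $M$ large. The point is that shutting off the backward edges costs at most $\overline{\lambda}$ per unit time \emph{independently of} $\langle\varphi_t,\vartheta\rangle$, so convex duality cannot trade cost for contraction here. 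The time-uniform bound you need genuinely requires something beyond a one-step drift estimate; the paper's detour through $V$ and~\eqref{eqn:v-km-containment} is precisely what supplies it.

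\textbf{The transfer-back step also fails.} Even granting a uniform bound $\varphi^{m_k}(n^\star)\in\K_C$, your claim that $\vartheta$-moment convergence ``can be extracted along a further subsequence'' is incorrect: take $\xi_j(0)=1-c/\vartheta(j)$, $\xi_j(j)=c/\vartheta(j)$, so $\xi_j\in\K_c$, $\xi_j\to\delta_0$ in $\MZ$, yet $\langle\xi_j,\vartheta\rangle=c\not\to 0=\langle\delta_0,\vartheta\rangle$. Compactness of $\K_C$ in $\MZ$ gives no control on $\vartheta$-tails, so you cannot conclude $\varphi^{m_k}(n^\star)\in K(\Delta)$ from $\varphi(n^\star)\in K(\Delta/2)$. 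The paper sidesteps this entirely by never passing to a limit trajectory.
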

\begin{proof}
Suppose not. Then there exist $s > 0$, $M > 0$, $\Delta > 0$, a sequence of positive numbers $\{\varepsilon_m, m \geq 1\}$ such that $\varepsilon_m \to 0$ as $m \to \infty$, and a sequence of trajectories $\{\varphi_m, m \geq 1\}$ such that $\varphi_m \in \mathscr{S}_{m}(\Delta,M)$, and $S_{[0,m]}(\varphi_m | \varphi_m(0)) \leq s + \varepsilon_m$ for each $m \geq 1$.

Note that there exists an $M_1 > 0$ such that $\varphi_m(t) \in \K_{M_1}$ for each $t \in [0,m]$ and each $m \geq 1$. Indeed, by Lemma~\ref{lemma:v-finiteness}, there exists $C_M > 0$ such that $\zeta \in K(\Delta)$ implies $V(\zeta) \leq C_M$. Thus, for each $m \geq 1$, there exist a $\bar{T}_m > 0$ and a  trajectory $\bar{\varphi}_m$ on $[0,\bar{T}_m]$ such that $\bar{\varphi}_m(0) = \xi^*$, $\bar{\varphi}_m(\bar{T}_m) = \zeta \in K(\Delta)$, and $S_{[0,\bar{T}_m]}(\bar{\varphi}_m | \xi^*) \leq C_M+1$. We extend this trajectory $\bar{\varphi}_m$ to $(\bar{T}_m, \bar{T}_m+m]$ by defining $\bar{\varphi}_m(t) = \varphi_m(t-\bar{T}_m)$ on $t \in (\bar{T}_m, \bar{T}+m]$. Note that $S_{[0, \bar{T}_m+m]}(\bar{\varphi}_m | \xi^*) \leq C_M+1+s+\varepsilon_m$, so that $V(\varphi_m(t)) \leq C_M+1+s+\varepsilon_m$ for each $t \in [0,m]$ and  each $m \geq 1$. Thus, we can find an $M_1 > 0$ such that~\eqref{eqn:v-km-containment} holds with $M$ replaced by $ C_M+s+\sup_{m\geq 1} \varepsilon_m+2$ and $M^\prime$ replaced by $M_1$. It follows that $\varphi_m(t) \in \K_{M_1}$ for each $t \in [0,m]$ and each $m \geq 1$.

For the above choice of $M_1$, using assumption~\ref{assm:zlogz-convergence}, choose $T_1 > 1$ such that $\mu_{\zeta}(t) \in K(\Delta/2)$ for each $t \geq T_1$ and each $\zeta \in \K_{M_1}$, where $\mu_\zeta$ is the solution to the McKean-Vlasov equation~\eqref{eqn:mve} with initial condition $\zeta$. Note that the closure of the set of all trajectories $\varphi$ on $[0,T_1]$ in $D([0,T_1], \MZ)$ with initial condition $\varphi(0) \in \K_{M_1}$ and $\varphi(T_1) \notin K(\Delta)$ does not contain any trajectory of the McKean-Vlasov equation~\eqref{eqn:mve}. It follows from Lemma~\ref{lemma:compactness-S} that 
\begin{align*}
\beta \coloneqq \inf\{S_{[0,T_1]}(\varphi| \varphi(0)), \varphi(0) \in \K_{M_1}, \varphi(n) \notin K(\Delta) \text{ for each } n=1,2,\ldots,\lfloor T_1 \rfloor\} > 0.
\end{align*}
Therefore, noting that $\varphi_m(t) \in \K_{M_1}$ for each $t \in [0,m]$ and $m \geq 1$, we see that
\begin{align*}
S_{[0,m]}(\varphi_m | \varphi_m(0)) & \geq \sum_{n=1}^{\lfloor m/T_1 \rfloor} S_{[(n-1)T_1,nT_1]}(\varphi_m|\varphi_m((n-1)T_1)) \\
& \geq \biggr\lfloor \frac{m}{T_1} \biggr\rfloor \beta \\
& \to \infty \text{ as } m \to \infty,
\end{align*}
which contradicts our assumption. This completes the proof of the lemma.
\end{proof}

With a slight abuse of notation, given $A \subset \MZ$, $s > 0$, and $T > 0$, define
\begin{align*}
\Phi_A^{[0,T]}(s)\coloneqq \{\varphi \in \DMZ: \varphi(0) \in A, S_{[0,T]}(\varphi|\varphi(0)) \leq s\}.
\end{align*}
We now prove a certain containment property for elements of $\MZ$ that can arise as end-points of trajectories in $\Phi_{K(\Delta)}^{[0,T]}(s)$, $s > 0$ and $\Delta > 0$, i.e., points $\xi \in \MZ$ such that there exists a trajectory $\varphi$ with $\varphi_0 \in K(\Delta)$ and $S_{[0,T]}(\varphi | \varphi_0) \leq s$. We prove that such points are not far from the lower level sets of $V$ in $\MZ$. This connection between trajectories over  finite time horizons and the level sets of the quasipotential $V$ is the key to transfer the process-level LDP upper bound in Theorem~\ref{thm:uniform-ldp-mun} to the LDP upper bound for the family of invariant measures $\{\wp^N, N \geq 1\}$.
\begin{lemma}
\label{lemma:upper-bound-inclusion}
For any $s > 0$ and $\delta > 0$ there exists $\Delta > 0$ and $T_1 \geq 1$ such that for all $ T \geq T_1$,
\begin{align}
\{\varphi(T) : \varphi \in \Phi_{K(\Delta)}^{[0,T]}(s)\} \subset \{\xi\in \MZ: d(\xi, \Xi(s)) \leq \delta\}.
\label{eqn:upper-bound-inclusion}
\end{align}
\end{lemma}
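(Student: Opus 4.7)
My plan is to argue by contradiction, invoking three ingredients already established: the small cost connection property near $\xi^*$ (Lemma~\ref{lemma:v-continuity}), the compactness of the level sets $\Xi(s)$ (Lemma~\ref{lemma:v-compactness-level-sets}, which in a metric space is equivalent to lower semicontinuity of $V$), and concatenation of trajectories under $S_{[0,T]}$.

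Suppose the claim fails. Then there exist $s > 0$ and $\delta > 0$ such that for every $n \geq 1$ we can find $\Delta_n \leq 1/n$, $T_n \geq n$, and a trajectory $\varphi_n \in \Phi_{K(\Delta_n)}^{[0,T_n]}(s)$ with $d(\varphi_n(T_n), \Xi(s)) > \delta$. The membership $\varphi_n(0) \in K(\Delta_n)$ gives $d(\varphi_n(0), \xi^*) \leq \Delta_n \to 0$ and $|\langle \varphi_n(0), \vartheta\rangle - \langle \xi^*, \vartheta\rangle| \leq \Delta_n \to 0$, and $\xi^* \in \K$ by assumption~\ref{assm:zlogz-convergence}. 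Applying Lemma~\ref{lemma:v-continuity} to the sequence $\{\varphi_n(0)\}$ yields $V(\varphi_n(0)) \to V(\xi^*) = 0$.

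For each $n$, choose via the definition of $V$ a trajectory $\psi_n$ on some $[0, T_n']$ from $\xi^*$ to $\varphi_n(0)$ with $S_{[0,T_n']}(\psi_n \mid \xi^*) \leq V(\varphi_n(0)) + 1/n$. Concatenating $\psi_n$ with $\varphi_n$ produces a trajectory on $[0, T_n' + T_n]$ from $\xi^*$ to $\varphi_n(T_n)$ whose total cost is at most $s + V(\varphi_n(0)) + 1/n$. Hence $V(\varphi_n(T_n)) \leq s + o(1)$, so that $\varphi_n(T_n) \in \Xi(s+1)$ for all sufficiently large $n$. By Lemma~\ref{lemma:v-compactness-level-sets}, $\Xi(s+1)$ is compact; extract a convergent subsequence $\varphi_{n_k}(T_{n_k}) \to \xi_\infty$ in $\MZ$. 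Lower semicontinuity of $V$ gives $V(\xi_\infty) \leq \liminf_k V(\varphi_{n_k}(T_{n_k})) \leq s$, i.e., $\xi_\infty \in \Xi(s)$. On the other hand $d(\varphi_n(T_n), \Xi(s)) > \delta$ for every $n$, so passing to the limit along the subsequence gives $d(\xi_\infty, \Xi(s)) \geq \delta$, a contradiction.

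The step that I expect to carry all the weight is the application of Lemma~\ref{lemma:v-continuity} in the second paragraph: without the small cost connection property at $\xi^*$ under simultaneous $\vartheta$-moment convergence, the fact that $\varphi_n(0)$ is close to $\xi^*$ in total variation would not be enough to conclude $V(\varphi_n(0)) \to 0$, since $V$ can be discontinuous on $\MZ$ (points of finite first moment but infinite $\vartheta$-moment are nearby but have $V = \infty$). This is exactly why the set $K(\Delta)$ was defined to control both the metric and the $\vartheta$-moment deviation from $\xi^*$, and why the hypothesis~\ref{assm:zlogz-convergence} is essential upstream. Once this small cost connection is available, the remaining ingredients (concatenation, compactness of level sets, lower semicontinuity) combine routinely.
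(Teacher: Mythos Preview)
Your proof is correct and follows essentially the same approach as the paper's: argue by contradiction, use the small cost connection property at $\xi^*$ (Lemma~\ref{lemma:v-continuity}) to prepend a short cheap trajectory from $\xi^*$ to $\varphi_n(0)$, concatenate, deduce $V(\varphi_n(T_n)) \leq s + o(1)$, extract a limit via compactness of $\Xi(s+1)$, and contradict lower semicontinuity of $V$. The only cosmetic difference is that the paper invokes the construction in Lemma~\ref{lemma:v-continuity} directly to get the connecting trajectory with cost $\varepsilon_n \to 0$, whereas you first conclude $V(\varphi_n(0)) \to 0$ and then appeal to the definition of $V$; these are equivalent.
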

\begin{proof}
Suppose not. Then there exist $s > 0$, $\delta > 0$, sequences  $\{\Delta_n, n \geq 1\}$, $\{T_n, n \geq 1\}$ such that $\Delta_n \downarrow 0$ and $T_n \uparrow \infty$ as $n \to \infty$, and trajectories $\varphi_n \in \Phi_{K(\Delta_n)}^{[0,T_n]}(s)$ such that $d(\varphi_n(T_n), \Xi(s)) > \delta$ for each $n \geq 1$. Let $\xi_n = \varphi_n(T_n)$, $n \geq 1$. By Lemma~\ref{lemma:v-continuity}, there exists a $T^\prime > 0$ and a sequence $\{\varepsilon_n, n \geq 1\}$, with $\varepsilon_n \to 0$ as $n \to \infty$, such that for any $\zeta^\prime \in K(\Delta_n)$ there exists a trajectory $\bar{\varphi}^{\zeta^\prime}$ on $[0,T^\prime]$ such that $\bar{\varphi}^{\zeta^\prime}(0) = \xi^*, \bar{\varphi}^{\zeta^\prime}(T^\prime) = \zeta^\prime$, and  $S_{[0,T^\prime]}(\bar{\varphi}^{\zeta^\prime} | \xi^*) \leq \varepsilon_n$. For each $n \geq 1$, let $\tilde{\varphi}_n$ be the trajectory on $[0,T^\prime+T_n]$ defined as follows. Let $\tilde{\varphi}_n(0) = \xi^*$; $\tilde{\varphi}_n(t) = \bar{\varphi}^{\varphi_n(0)}(t)$ on $t \in [0,T^\prime]$; $\tilde{\varphi}_n(t) = \varphi_n(t-T^\prime)$ on $t \in (T^\prime, T^\prime+T_n]$. In particular, $\tilde{\varphi}_n(T^\prime+T_n) = \xi_n$. Clearly, $S_{[0,T^\prime+T_n]}(\tilde{\varphi}_n | \xi^*) \leq s + \varepsilon_n$. It follows that $V(\xi_n) \leq s + \varepsilon_n$. Using the compactness of the lower level sets of $V$ (see Lemma~\ref{lemma:v-compactness-level-sets}), we can find a convergent subsequence of $\{\xi_n, n \geq 1\}$; after re-indexing and denoting this convergent subsequence by $\{\xi_n, n \geq 1\}$, let $\xi_n \to \xi$ in $\MZ$ as $n \to \infty$. By assumption, $d(\xi_n, \Xi(s)) > \delta$ for each $n \geq 1$, and hence $d(\xi, \Xi(s)) \geq \delta$.  Using the lower semicontinuity of $V$, we see that
\begin{align*}
V(\xi) \leq \liminf_{n \to \infty} V(\xi_n) \leq \liminf_{n \to \infty} (s+\varepsilon_n)= s.
\end{align*}
Hence $\xi \in \Xi(s)$. This contradicts  $d(\xi, \Xi(s)) \geq \delta$, which is a  consequence of our assumption. This proves the lemma.
\end{proof}

We are now ready to prove the LDP upper bound for the family $\{\wp^N, N \geq 1\}$. The proof relies on the uniform LDP upper bound in Theorem~\ref{thm:uniform-ldp-mun},  the exponential tightness of the family $\{\wp^N, N \geq 1\}$, the containment property established in Lemma~\ref{lemma:upper-bound-inclusion}, an estimate on the probability that $\mu^N$ lies in $\mathscr{S}_m(M,\Delta)$ (which uses the process-level uniform LDP upper bound in Theorem~\ref{thm:uniform-ldp-mun} and the result of Lemma~\ref{lemma:upper-bound-cost-lower-bound}), and finally  the strong Markov property of $\mu^N$.
\begin{lemma}
\label{lemma:upper-bound}
For any $\gamma > 0$, $\delta > 0$, and $s > 0$, there exists $N_0 \geq 1$ such that 
\begin{align*}
\wp^N\{\zeta \in \MZ : d(\zeta, \Xi(s)) \geq \delta\} \leq \exp\{-N(s - \gamma)\}
\end{align*}
for all $N \geq N_0$.
\end{lemma}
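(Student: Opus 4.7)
The plan is to leverage the stationarity of $\wp^N$, the exponential tightness from Proposition~\ref{prop:invariant-measure-exp-tightness}, the containment in Lemma~\ref{lemma:upper-bound-inclusion}, the cost lower bound of Lemma~\ref{lemma:upper-bound-cost-lower-bound}, and the uniform LDP upper bound of Theorem~\ref{thm:uniform-ldp-mun}, all glued together by the strong Markov property applied at a suitable integer-valued hitting time. First I would fix the parameters: given $s, \delta, \gamma > 0$, use Lemma~\ref{lemma:upper-bound-inclusion} (applied with $\delta$ replaced by $\delta/2$) to pick $\Delta > 0$ and $T_1 \geq 1$ such that every trajectory $\varphi$ on $[0,T]$ with $\varphi(0) \in K(\Delta)$ and $S_{[0,T]}(\varphi|\varphi(0)) \leq s$ satisfies $d(\varphi(T), \Xi(s)) \leq \delta/2$ for all $T \geq T_1$; use Proposition~\ref{prop:invariant-measure-exp-tightness} to select $M > 0$ so that $\wp^N(\sim \hspace{-0.4em} \K_M) \leq \exp\{-N(s+1)\}$ for $N$ large; and apply Lemma~\ref{lemma:upper-bound-cost-lower-bound} with target cost $s + \gamma$ and these $M, \Delta$ to obtain an integer $m_0$ such that $\inf\{S_{[0,m_0]}(\varphi|\varphi(0)) : \varphi \in \mathscr{S}_{m_0}(\Delta,M)\} > s + \gamma$. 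Set $T := m_0 + T_1$.

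Next, by the invariance of $\wp^N$,
\begin{align*}
\wp^N\{\zeta : d(\zeta,\Xi(s)) \geq \delta\} = \int_{\MNZ} \PN_\zeta\bigl(d(\mu^N(T),\Xi(s)) \geq \delta\bigr) \, \wp^N(d\zeta),
\end{align*}
so it suffices to bound the integrand uniformly over $\zeta \in \K_M \cap \MNZ$, since the $\sim \hspace{-0.4em} \K_M$ contribution is already absorbed by the exponential tightness estimate. For such $\zeta$, introduce the stopping time
$\sigma := \min\{n \in \{1,\ldots,m_0\} : \mu^N(n) \in K(\Delta)\}$
with the usual convention $\min \emptyset = \infty$, and split
\begin{align*}
\PN_\zeta(d(\mu^N(T),\Xi(s)) \geq \delta) \leq \PN_\zeta(\sigma = \infty) + \sum_{n=1}^{m_0} \PN_\zeta\bigl(\sigma = n,\, d(\mu^N(T),\Xi(s)) \geq \delta\bigr).
\end{align*}

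I would bound the two contributions separately. On $\{\sigma = \infty\}$, the sample path $\mu^N|_{[0,m_0]}$ lies in $\mathscr{S}_{m_0}(\Delta,M)$, which is disjoint from $\Phi^{[0,m_0]}_\zeta(s + \gamma/2)$ by the choice of $m_0$; a sufficiently small Skorohod thickening is still disjoint (evaluations at integer times are continuous functionals on trajectories without jumps at integer times, so this is a routine thickening argument). The uniform LDP upper bound of Theorem~\ref{thm:uniform-ldp-mun} applied to the compact initial set $\K_M$ yields $\PN_\zeta(\sigma = \infty) \leq \exp\{-N(s + \gamma/4)\}$ for all $N$ large. For the $n$-th summand, apply the strong Markov property at $n$: the post-$\sigma$ process starts at $\mu^N(n) \in K(\Delta)$ and evolves over a horizon $T - n \geq T_1$. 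By the contrapositive of Lemma~\ref{lemma:upper-bound-inclusion} (with the $\delta/2$ margin), the event $\{d(\mu^N(T),\Xi(s)) \geq \delta\}$ forces $S_{[0,T-n]}(\mu^N(n+\cdot)|\mu^N(n)) > s$; again after a small Skorohod thickening and applying the uniform LDP upper bound on the compact set $K(\Delta)$, each summand is at most $\exp\{-N(s - \gamma/2)\}$ for $N$ large. Summing the $m_0 + 1$ contributions and the exponentially tight contribution from $\sim \hspace{-0.4em} \K_M$, and absorbing the polynomial factor $m_0 + 2$ into the exponent for large $N$, one obtains $\wp^N\{d(\cdot,\Xi(s)) \geq \delta\} \leq \exp\{-N(s-\gamma)\}$, as required.

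The main obstacle is the passage from the deterministic cost statement of Lemma~\ref{lemma:upper-bound-inclusion} to a probabilistic statement about the random trajectory $\mu^N$ via the uniform LDP upper bound. The evaluation map $\varphi \mapsto \varphi(t)$ is not continuous on $\DMZ$ in general, so one must arrange that the Skorohod thickening used to apply Theorem~\ref{thm:uniform-ldp-mun} does not spoil the conclusions drawn from Lemmas~\ref{lemma:upper-bound-cost-lower-bound} and~\ref{lemma:upper-bound-inclusion}. This is handled by building in a small margin (the $\delta/2$ instead of $\delta$, and the $s + \gamma/2$ instead of $s + \gamma$) and by exploiting that the exceptional times at issue are deterministic integer times, together with the uniformity of the large deviation estimates over the compact initial sets $\K_M$ and $K(\Delta)$.
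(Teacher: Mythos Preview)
Your proposal is correct and follows essentially the same route as the paper's proof: invariance of $\wp^N$ to shift to time $T=m_0+T_1$, exponential tightness to restrict to $\K_M$, a split according to whether the path hits $K(\Delta)$ at some integer time in $\{1,\ldots,m_0\}$, Lemma~\ref{lemma:upper-bound-cost-lower-bound} plus the uniform LDP upper bound for the ``never hits'' piece, and strong Markov plus Lemma~\ref{lemma:upper-bound-inclusion} plus the uniform LDP upper bound for the ``hits at time $n$'' pieces. The only cosmetic differences are your use of the stopping time $\sigma$ in place of the paper's direct union bound over $m=1,\ldots,m_0$, and your choice of margins ($\delta/2$, $s+\gamma$) versus the paper's ($\delta$, $s$ with a separate $\delta'$); for the Skorohod thickening you invoke continuity of integer-time evaluations on continuous limit paths, while the paper phrases the same step via compactness of $\Phi^{[0,m_0]}_{\K_M}(s)$ to extract a positive $\delta_0$-separation from $\mathscr{S}_{m_0}(\Delta,M)$.
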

\begin{proof}
Fix $\gamma > 0$, $\delta > 0$, and $s > 0$. Choose $M > 0$ and $N_1 \geq 1$ such that $\wp^N(\sim \hspace{-0.4em} \K_M) \leq \exp\{-Ns\}$ for all $N \geq N_1$; this is possible from the exponential tightness of the family $\{\wp^N, N \geq 1\}$, see Proposition~\ref{prop:invariant-measure-exp-tightness}. For the given $s > 0$ and $\delta > 0$, from Lemma~\ref{lemma:upper-bound-inclusion}, choose $\Delta > 0$ and $T_1 > 0$ such that~\eqref{eqn:upper-bound-inclusion} holds for all $T \geq T_1$. For the above choice of $\Delta > 0$ and $M > 0$, by Lemma~\ref{lemma:upper-bound-cost-lower-bound},  choose $m_0 \in \N$ such that such that~\eqref{eqn:upper-bound-cost-lower-bound} holds. By~\eqref{eqn:upper-bound-cost-lower-bound} and the compactness of $\Phi^{[0,m_0]}_{\K_M}(s)$ in $D([0,m_0], \MZ)$ (which follows from Lemma \ref{lemma:compactness-S}), the closure of $\mathscr{S}_{m_0}(\Delta, M)$ does not intersect $\Phi^{[0,m_0]}_{\K_M}(s)$. It follows that there exists a $\delta_0 > 0$ such that $\varphi \in \mathscr{S}_{m_0}(\Delta, M)$ implies $\rho(\varphi, \Phi_{\K_M}^{[0,m_0]}(s)) \geq \delta_0$. Hence by the uniform LDP upper bound in Theorem~\ref{thm:uniform-ldp-mun}, there exists $N_2 \geq N_1$ such that
\begin{align}
\PN_\zeta(\mu^N \in \mathscr{S}_{m_0}(\Delta, M)) &\leq \PN_{\zeta}(\rho(\mu^N, \Phi_{\K_M}^{[0,m_0]}(s)) \geq \delta_0) \nonumber \\
& \leq  \exp\{-N(s-\gamma/2)\}
\label{eqn:upper-bound-cost-lower-bound-probability}
\end{align}
for all $\zeta \in \K_M \cap \MNZ$ and $N \geq N_2$. Thus, with $T = m_0 + T_1$ and $N \geq N_2$, we have
\begin{align}
\wp^N \{\zeta \in \MZ &: d(\zeta, \Xi(s))\geq \delta\} \nonumber \\
& = \int_{\MNZ} \PN_\zeta(d(\mu^N(T), \Xi(s)) \geq \delta) \wp^N(d\zeta) \nonumber \\
& \leq \exp\{-Ns\} + \int_{\K_M \cap \MNZ} \PN_\zeta(d(\mu^N(T), \Xi(s)) \geq \delta) \wp^N(d\zeta) \nonumber \\
& \leq \exp\{-Ns\} + \sup_{\zeta \in \K_M \cap \MNZ} \PN_\zeta(\mu^N \in \mathscr{S}_{m_0}(\Delta, M)) \nonumber \\
& \qquad +  \int_{\K_M \cap \MNZ} \PN_\zeta(\mu^N \notin \mathscr{S}_{m_0}(\Delta, M), d(\mu^N(T), \Xi(s)) \geq \delta) \wp^N(d\zeta) \nonumber \\
& \leq \exp\{-Ns\} + \exp\{-N(s-\gamma/2)\} \nonumber \\
& \qquad +  \int_{\K_M \cap \MNZ} \PN_\zeta(\mu^N \notin \mathscr{S}_{m_0}(\Delta, M), d(\mu^N(T), \Xi(s)) \geq \delta) \wp^N(d\zeta); \label{eqn:upper-bound-series1}
\end{align}
here the first equality follows since $\wp^N$ is invariant to time shifts, the first inequality follows from the choice of $M$, and the third inequality follows from~\eqref{eqn:upper-bound-cost-lower-bound-probability}.

To bound the integrand in the third term above, let $T^\prime \geq T_1$ and $\zeta^\prime \in K(\Delta)$. Choose\footnote{The existence of such a  $\delta^\prime$ can be justified via arguments similar to those used in the proof of Lemma \ref{lemma:lower-bound-specific-path}; see the paragraph before \eqref{eqn:lb-series}} $0 < \delta^\prime < \delta $ (depending on $T$ and $s$, and not on $\zeta^\prime$ and $T^\prime$) such that $\rho(\varphi_1, \varphi_2) < \delta^\prime/2$ implies $d(\varphi_1(T^\prime), \varphi_2(T^\prime)) < \delta/2$ whenever $\varphi_1 \in D([0,T^\prime], \MZ)$ and $\varphi_2 \in \Phi^{[0,T^\prime]}_{\zeta^\prime}(s)$. Note that if a trajectory $\varphi$ on $[0,T^\prime]$ with initial condition $\varphi(0) = \zeta^\prime$ is such that $\rho(\varphi, \Phi_{\zeta^\prime}^{[0,T^\prime]}(s)) < \delta^\prime/2$, then there exists a trajectory $\varphi^\prime \in \Phi_{\zeta^\prime}^{[0,T^\prime]}(s)$ such that $\rho(\varphi, \varphi^\prime) < \delta^\prime/2$. By the choice of $\delta^\prime$, we have $d(\varphi(T^\prime), \varphi^\prime(T^\prime))<\delta/2$. By Lemma~\ref{lemma:upper-bound-inclusion}, we find that $d(\varphi^\prime(T^\prime), \Xi(s)) \leq \delta^\prime/2$.  Hence by triangle inequality $d(\varphi(T^\prime), \Xi(s)) < \delta/2 + \delta^\prime/2 < \delta$. The contrapositive of the above statement is
\begin{align*}
d(\varphi(T^\prime), \Xi(s)) \geq \delta \Rightarrow \rho(\varphi, \Phi_{\zeta^\prime}^{[0,T^\prime]}(s)) \geq \delta^\prime/2.
\end{align*}
We therefore conclude that
\begin{align}
\PN_{\zeta^\prime}(d(\mu^N(T^\prime), \Xi(s)) \geq \delta) \leq \PN_{\zeta^\prime}(\rho(\mu^N, \Phi_{\zeta^\prime}^{[0,T^\prime]}(s)) \geq \delta^\prime/2) \label{eqn:upper-bound-inclusion-probability}
\end{align}
for all $T^\prime \geq T_1$, $\zeta^\prime \in \K(\Delta)\cap \MNZ$, and $N \geq 1$.

Note that the integrand in the last term of~\eqref{eqn:upper-bound-series1} can be upper bounded by
\begin{align}
\PN_\zeta & (\mu^N \notin \mathscr{S}_{m_0}(\Delta, M), d(\mu^N(T), \Xi(s)) \geq \delta) \nonumber \\
& = \PN_\zeta(\mu^N(m) \in K(\Delta) \text{ for some }m=1,2,\ldots, m_0, \,  d(\mu^N(T), \Xi(s)) \geq \delta ) \nonumber \\
& \leq \sum_{m=1}^{m_0} \sup_{\zeta^\prime \in K(\Delta) \cap \MNZ} \PN_{\zeta^\prime} (d(\mu^N(T-m),\Xi(s)) \geq \delta) \nonumber \\
& \leq \sum_{m=1}^{m_0} \sup_{\zeta^\prime \in K(\Delta) \cap \MNZ} \PN_{\zeta^\prime}(\rho((\mu^N(t), 0 \leq t \leq T-m), \Phi_{\zeta^\prime}^{[0,T-m]}(s)) \geq \delta^\prime/2) \label{eqn:upper-bound-series2}
\end{align}
where the first inequality follows from the strong Markov property of $\mu^N$ and the second inequality follows from~(\ref{eqn:upper-bound-inclusion-probability}) by the choice of $T$. By the uniform LDP upper bound in Theorem~\ref{thm:uniform-ldp-mun}, for each $m = 1,2,\ldots m_0$, there exist $N(m) \geq N_2 $ such that
\begin{align*}
\PN_{\zeta^\prime}(\rho((\mu^N(t), 0 \leq t \leq T-m), \Phi_{\zeta^\prime}^{[0,T-m]}(s)) \geq \delta^\prime/2 ) \leq \exp\{-N(s-\gamma/2)\}
\end{align*}
for all $\zeta^\prime \in \K(\Delta) \cap \MNZ$ and $N \geq N(m)$. Put $N_3 = \max\{N(m), m = 1,2,\ldots, m_0, N_1, N_2\}$. Then~(\ref{eqn:upper-bound-series2}) yields
\begin{align*}
\PN_\zeta & (\mu^N \notin \mathscr{S}_{m_0}(\Delta, M), d(\mu^N(T), \Xi(s)) \geq \delta)  \leq m_0 \exp\{-N(s - \gamma/2)\}
\end{align*} 
for all $\zeta \in \K_M \cap \MNZ$ and $N \geq N_3$. Substitution of this back in~(\ref{eqn:upper-bound-series1}) yields
\begin{align*}
\wp^N \{\zeta \in \MZ &: d(\zeta, \Xi(s))\geq \delta\} \leq \exp\{-Ns\} + (m_0+1)\exp\{-N(s-\gamma/2)\}
\end{align*}
for all $N \geq N_3$. Finally, choose $N_0 \geq N_3$ such that $1+(m_0+1)\exp\{N\gamma/2\} \leq \exp\{N\gamma\}$ for all $N \geq N_0$. Then the above display becomes
\begin{align*}
\wp^N \{\zeta \in \MZ &: d(\zeta, \Xi(s))\geq \delta\} \leq \exp\{-N(s-\gamma)\}
\end{align*}
for all $N \geq N_0$. This completes the proof of the lemma.
\end{proof}
\section{Proof of Theorem~\ref{thm:main-result}}
\label{section:proof-completion}
We now complete the proof of Theorem~\ref{thm:main-result}.
\begin{itemize}
\item (Compactness of level sets). For any $s > 0$, by Lemma~\ref{lemma:v-compactness-level-sets}, the set $\Xi(s) = \{\xi \in \MZ: V(\xi) \leq s\}$ is a compact subset of $\MZ$;
\item (LDP lower bound). Given $\gamma > 0$, $\delta > 0$, and $\xi \in \MZ$, by Lemma~\ref{lemma:lower-bound-specific-path}, there exists $N_0 \geq 1$ such that 
\begin{align*}
\wp^N \{\zeta \in \MZ : d(\zeta, \xi) < \delta\} \geq \exp\{-N(V(\xi) + \gamma)\}
\end{align*}
for all $N \geq N_0$;
\item (LDP upper bound). Given $\gamma > 0$, $\delta > 0$, and $s > 0$, by Lemma~\ref{lemma:upper-bound}, there exists $N_0 \geq 1$ such that
\begin{align*}
\wp^N\{\zeta \in \MZ : d(\zeta, \Xi(s)) \geq \delta\} \leq \exp\{-N(s-\gamma)\}
\end{align*}
for all $N \geq N_0$.
\end{itemize} 
This completes the proof of Theorem~\ref{thm:main-result}.

\section{Two counterexamples}
\label{section:counterexamples}
In this section, for two non-interacting counterexamples described in Section~\ref{subsection:counterexamples-introduction}, we prove that the quasipotential is not equal to the relative entropy with respect to the corresponding globally asymptotically stable equilibrium. These two counterexamples are (i) a system of non-interacting M/M/1 queues, and (ii) a system of non-interacting nodes in a wireless local area network (WLAN) with constant forward transition  rates.  We detail the proofs  in the case of non-interacting M/M/1 queues. Similar arguments carry over to the case of non-interacting WLAN system with constant forward transition rates as well.
\subsection{A system of non-interacting M/M/1 queues}
Recall the system of non-interacting M/M/1 queues described in Section~\ref{subsection:counterexamples-introduction-mm1}. Recall the relative entropy from~\eqref{eqn:entropy} and the process-level large deviations rate function from~\eqref{eqn:rate-function-finite-duration-counter-example}. Also recall the function $\vartheta$ defined in \eqref{eqn:theta} and the compact sets  $\K_M$, $M > 0$,  defined in \eqref{eqn:KM}. Define the quasipotential
\begin{align*}
V_Q(\xi) \coloneqq \inf \{S^Q_{[0,T]}(\varphi | \xi^*_Q), \varphi(0) = \xi^*_Q, \varphi(T) = \xi, T > 0\}, \, \xi \in \MZ,
\end{align*}
where $S^Q$ is defined by~\eqref{eqn:rate-function-finite-duration-counter-example} with $\E$ replaced by $\EQ$ and $L_\zeta$ replaced by $L^Q$ for each $\zeta \in \MZ$.

We first prove that the quasipotential $V_Q$ is not finite outside $\K$. The key property used for this is the fact that the attractor $\xi^*_Q$ has geometric decay. As a consequence $\langle \xi^*_Q, \vartheta \rangle < \infty$. Using this property, we first show that if $\xi \notin \K$, then the associated quasipotential evaluated at $\xi$ cannot be finite. This is shown by producing a lower bound for the cost of any trajectory starting at $\xi^*_Q$ and ending at $\xi \notin \K$ from the rate function in~\eqref{eqn:rate-function-finite-duration-counter-example}.
\begin{lemma}
\label{lemma:mm1-xi-infinite}
If $\xi \in \MZ$ is such that $\xi \notin \K$, then $V_Q(\xi) = \infty$.
\end{lemma}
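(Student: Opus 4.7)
The plan is to argue by contradiction. Suppose $V_Q(\xi) < \infty$, so there exist $T > 0$ and a trajectory $\varphi$ on $[0,T]$ with $\varphi(0) = \xi^*_Q$, $\varphi(T) = \xi$, and $C := S^Q_{[0,T]}(\varphi | \xi^*_Q) < \infty$. I will deduce $\langle \xi, \vartheta \rangle < \infty$, which contradicts $\xi \notin \K$. The key tool is the non-variational representation of Theorem~\ref{thm:weak-equation} combined with the Fenchel--Young bound $(1 + h) j \leq \tau^*(h) + e^j - 1$ (valid whenever $1 + h \geq 0$); specializing \eqref{eqn:weak-equation} to a time-independent, finitely supported test function $f : \Z \to \R$ gives the master inequality
\begin{align*}
\langle \varphi_t, f \rangle \leq \langle \xi^*_Q, f \rangle + C + \int_0^t \sum_{(z,z') \in \EQ} (e^{f(z')-f(z)} - 1)\, \lambda_{z,z'}\, \varphi_u(z)\, du, \qquad t \in [0,T].
\end{align*}
The proof then proceeds by applying this inequality to two successively stronger test functions.

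In the first stage, I would apply the master inequality with $f = c\, \iota_n$ for a small fixed $c > 0$, where $\iota_n$ equals $\iota(z) = z$ on $\{0,\ldots,n\}$, decays linearly from $n$ to $0$ on $\{n,\ldots,2n\}$, and vanishes beyond. Forward and backward increments equal $\pm c$, and the positive sign appears only in the truncation window $\{n+1,\ldots,2n\}$ for backward edges; that contribution is bounded by $(e^c - 1)\lambda_b\, \varphi_u([n+1,\infty))$. Since $\xi^*_Q$ has geometric decay, $\langle \xi^*_Q, \iota \rangle < \infty$, and $\int_0^T \varphi_u([n+1,\infty))\,du \to 0$ as $n \to \infty$ by dominated convergence. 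Letting $n \to \infty$ with Fatou's lemma on the left yields the uniform first-moment bound $M_1 := \sup_{t \in [0,T]} \langle \varphi_t, \iota \rangle < \infty$.

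In the second stage, I would apply the master inequality with $f = c\, \tilde{\vartheta}_n$ for some fixed $c \in (0,1)$, where $\tilde{\vartheta}_n$ equals $\vartheta$ on $\{0,\ldots,n\}$, decays linearly from $\vartheta(n)$ to $0$ on $\{n,\ldots,2n\}$, and vanishes beyond. On the interior $\{0,\ldots,n-1\}$, forward increments satisfy $\tilde{\vartheta}_n(z+1) - \tilde{\vartheta}_n(z) \leq 1 + \log(z+1)$, so $e^{c(\tilde{\vartheta}_n(z+1) - \tilde{\vartheta}_n(z))} - 1 \leq e^c (z+1)^c \leq e^c (z+1)$ (using $c \leq 1$), and the forward contribution is bounded by $e^c \lambda_f (1 + M_1)$ via Stage~1. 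The backward contribution on $\{1,\ldots,n\}$ is non-positive, while on $\{n+1,\ldots,2n\}$ the backward increment equals $+\log n$, producing a contribution at most $n^c \lambda_b\, \varphi_u([n+1,\infty)) \leq n^c \lambda_b M_1/(n+1)$ by Markov's inequality, which vanishes as $n \to \infty$ because $c < 1$. Taking $n \to \infty$ with Fatou on the left and using $\langle \xi^*_Q, \vartheta \rangle < \infty$ gives $c\langle \xi, \vartheta \rangle \leq c\langle \xi^*_Q, \vartheta \rangle + C + T e^c \lambda_f (1 + M_1) < \infty$, the desired contradiction.

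The main obstacle is handling the backward boundary contribution on the truncation window in Stage~2: the linear decay of $\tilde{\vartheta}_n$ forces discrete slopes of magnitude $\log n$ along backward edges, producing a factor $n^c = e^{c \log n}$ in the exponential term that can only be absorbed by the $O(1/n)$ tail decay of $\varphi_u$ supplied by Stage~1, and only when $c$ is chosen strictly less than~$1$. It is exactly this obstruction that forces the two-stage structure of the argument.
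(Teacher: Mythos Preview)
Your proposal is correct and follows the same two-stage strategy as the paper: first control $\sup_t\langle\varphi_t,\iota\rangle$, then use that to bound $\langle\xi,\vartheta\rangle$. The paper derives your master inequality directly from the variational form~\eqref{eqn:rate-function-finite-duration-counter-example} rather than via Theorem~\ref{thm:weak-equation}, but these yield the same bound. The one substantive difference is the truncation in Stage~2: you let $\tilde{\vartheta}_n$ decay \emph{linearly} on $\{n,\ldots,2n\}$, which produces backward increments of size $\log n$ in that window and forces you to introduce the multiplier $c<1$ together with Markov's inequality to absorb the resulting $n^c$ factor. The paper instead takes the symmetric truncation $\vartheta_n(z)=\vartheta(2n-z)$ on $\{n+1,\ldots,2n\}$; with that choice every edge increment is bounded by $1+\log(z+1)$, so the exponential term is controlled directly by the first-moment bound and no multiplier is needed --- a cleaner bookkeeping for the same idea. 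One small slip in your Stage~1 exposition: forward edges on $\{0,\ldots,n-1\}$ also carry increment $+c$ (not only backward edges in the truncation window), but that contribution is at most $(e^c-1)\lambda_f$ uniformly in $n$ and does not affect the conclusion.
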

\begin{proof}
Fix $\xi \in \MZ$. Let $T > 0$ and $\varphi \in \DMZ$ be such that $\varphi_0 = \xi^*_Q$ and  $\varphi_T = \xi$. For each $n \geq 1$, define $f_n$ by
\begin{align*}
f_n(z) =  \left\{
\begin{array}{ll}
z, \text{ if } z \leq n\\
2n-z, \text{ if } n+1 \leq z \leq 2n,\\
0, \text{ if }z > 2n,
\end{array}
\right.
\end{align*}
and define $f_\infty(z) = z$ for each $z \in \Z$. Note that the use of $f_n$ is to approximate $f_\infty$ using  $C_0(\Z)$ functions so that we can insert them into \eqref{eqn:rate-function-finite-duration-counter-example}. We first assume that $\langle \xi , f_\infty \rangle = \infty$. In particular, $\xi \notin \K$. Using the function $f_n$ in place of $f$ in the RHS of \eqref{eqn:rate-function-finite-duration-counter-example}, we have
\begin{align*}
S_{[0,T]}^Q(\varphi | \xi^*_Q) & \geq \langle \varphi_T , f_n \rangle  - \langle \xi^*_Q , f_n \rangle - \int_{[0,T]}\langle \varphi_u, L^Q f_n \rangle - \int_{[0,T]} \sum_{(z,z^\prime) \in \EQ}\tau(f_n(z^\prime) - f_n(z)) \lambda_{z,z^\prime} \varphi_u(z) du \\
& =\langle \varphi_T , f_n \rangle  - \langle \xi^*_Q , f_n \rangle -\int_{[0,T]} \sum_{(z,z^\prime) \in \EQ} (\exp\{f_n(z^\prime) - f_n(z)\}-1) \lambda_{z,z^\prime} \varphi_u(z) du,
\end{align*}
where $\lambda_{z,z+1} = \lambda_f$, $z \in \Z$, and $\lambda_{z,z-1} = \lambda_b$, $z \in \Z \setminus \{0\}$. Noting that $f_n(z^\prime) - f_n(z)$ is either $1$, $0$ or $-1$ for each $(z,z^\prime) \in \EQ$, we have  $\sum_{(z,z^\prime) \in \EQ} (\exp\{f_n(z^\prime) - f_n(z)\}-1) \lambda_{z,z^\prime} \varphi_u(z) \leq 2(e-1)\lambda_b$ for each $u \in [0,T]$. Hence the above becomes
\begin{align*}
S_{[0,T]}^Q(\varphi | \xi^*_Q) & \geq \langle \varphi_T , f_n \rangle  - \langle \xi^*_Q , f_n \rangle -2(e-1)\lambda_bT.
\end{align*}
Note that $\langle \xi^*_Q, f_\infty \rangle < \infty$. Hence, letting $n \to \infty$ and using the monotone convergence theorem,  we conclude that $S_{[0,T]}^Q(\varphi | \xi^*_Q) = \infty$.

We now assume that $\xi \notin \K$ is such that $\langle \xi, f_\infty \rangle < \infty$. Let $T > 0$ and $\varphi \in \DMZ$ be such that $\varphi_0 = \xi^*_Q$ and $\varphi_T = \xi$. Without loss of generality, we can assume that $\sup_{t \in [0,T]} \langle \varphi_t , f_\infty \rangle < \infty$; otherwise the argument in the above paragraph shows that $S_{[0,T]}^Q(\varphi | \xi^*_Q) = \infty$. Define 
\begin{align*}
\vartheta_n(z) = \left\{
\begin{array}{ll}
\vartheta(z), \text{ if } z \leq n, \\
\vartheta(2n - z) \text{ if } n+1 \leq z \leq 2n,\\
0, \text{ if } z >2n.
\end{array}
\right.
\end{align*}
Using $\vartheta_n$ in the RHS of \eqref{eqn:rate-function-finite-duration-counter-example}, we get
\begin{align*}
S_{[0,T]}^Q(\varphi | \xi^*_Q) \geq \langle \xi, \vartheta_n \rangle -  \langle \xi^*_Q, \vartheta_n \rangle - \int_{[0,T]}\sum_{(z,z^\prime) \in \EQ} (\exp\{\vartheta_n(z^\prime) - \vartheta_n(z)\} -1) \lambda_{z,z^\prime} \varphi_u(z) du.
\end{align*}
Noting that $\vartheta_n(z^\prime) - \vartheta_n(z)$ can be upper bounded by $1 + \log(z+1)$ for each $(z,z^\prime) \in \EQ$, it follows that $\sum_{(z,z^\prime) \in \EQ} (\exp\{\vartheta_n(z^\prime) - \vartheta_n(z)\} -1) \lambda_{z,z^\prime} \varphi_u(z) \leq 2\lambda_b(e (\sup_{t \in [0,T]} \langle \varphi_t, f_\infty \rangle+1)-1)$ for each $u \in [0,T]$. Hence the above display becomes
\begin{align*}
S_{[0,T]}^Q(\varphi | \xi^*_Q) \geq \langle \xi, \vartheta_n \rangle -  \langle \xi^*_Q, \vartheta_n \rangle -2\lambda_b(e (\sup_{t \in [0,T]} \langle \varphi_t, f_\infty \rangle+1)-1)T.
\end{align*}
As before, letting $n \to \infty$, using the monotone convergence theorem, and noting that $\xi^*_Q \in \K$, we conclude that $S_{[0,T]}^Q(\varphi | \xi^*_Q) = \infty$. 

Since $\xi \notin \K$, $T > 0$, and $\varphi \in \DMZ$ such that $\varphi_0 = \xi^*_Q$ and $\varphi_T = \xi$ are arbitrary, the proof of the lemma is complete.
\end{proof}

We now prove the main result of this section, namely, the quasipotential $V_Q$ is not equal to the relative entropy $I(\cdot \| \xi^*_Q)$.
\begin{proposition} Let $\xi \in \MZ$ be such that $\langle \xi, f_\infty \rangle < \infty$ and $\xi \notin \K$. Then $I(\xi \| \xi_Q^*) < \infty$ and $V(\xi) = \infty$. In particular, $V \neq I(\cdot \| \xi_Q^*)$.
\label{thm:mm1-counterexample}
\end{proposition}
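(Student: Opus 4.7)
The plan is to verify the two finiteness claims separately---$I(\xi \| \xi^*_Q) < \infty$ by a direct calculation exploiting the geometric form of $\xi^*_Q$, and $V(\xi) = \infty$ as a direct consequence of the already-established Lemma~\ref{lemma:mm1-xi-infinite}---and then to exhibit one concrete $\xi$ satisfying both hypotheses to conclude that $V$ and $I(\cdot \| \xi^*_Q)$ are not equal as functions on $\MZ$.

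For the relative-entropy bound, since $\xi^*_Q(z) > 0$ for every $z \in \Z$, the measure $\xi$ is absolutely continuous with respect to $\xi^*_Q$, so
\begin{align*}
I(\xi \| \xi^*_Q) = \sum_{z \in \Z} \xi(z) \log \xi(z) \; - \; \sum_{z \in \Z} \xi(z) \log \xi^*_Q(z).
\end{align*}
The first sum is nonpositive since $\xi(z) \in [0,1]$ for each $z$. Substituting the explicit geometric form $\xi^*_Q(z) = (1 - \lambda_f/\lambda_b)(\lambda_f/\lambda_b)^z$ into the second sum gives
\begin{align*}
-\sum_{z \in \Z} \xi(z) \log \xi^*_Q(z) \; = \; -\log\!\Big(1 - \tfrac{\lambda_f}{\lambda_b}\Big) \; + \; \log\!\Big(\tfrac{\lambda_b}{\lambda_f}\Big) \langle \xi, f_\infty \rangle,
\end{align*}
which is finite because $\lambda_b > \lambda_f$ and $\langle \xi, f_\infty \rangle < \infty$ by hypothesis. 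Combining these two observations yields the bound $I(\xi \| \xi^*_Q) \leq -\log(1 - \lambda_f/\lambda_b) + \log(\lambda_b/\lambda_f) \langle \xi, f_\infty \rangle < \infty$. The statement $V(\xi) = \infty$ is then immediate from Lemma~\ref{lemma:mm1-xi-infinite}, whose hypothesis $\xi \notin \K$ is exactly the one given.

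For the final claim $V \neq I(\cdot \| \xi^*_Q)$, it suffices to produce a single probability measure $\xi$ on $\Z$ for which both hypotheses hold. A convenient choice is $\xi(z) = c\, z^{-2} (\log z)^{-2}$ for $z \geq 2$, with $\xi(0)$ and $\xi(1)$ chosen so that the $\xi(z)$ sum to $1$; the integral test shows that $\sum_{z \geq 2} z \xi(z) \asymp \sum_{z \geq 2} 1/(z (\log z)^2)$ converges while $\sum_{z \geq 2} z \log z \cdot \xi(z) \asymp \sum_{z \geq 2} 1/(z \log z)$ diverges, so $\langle \xi, f_\infty \rangle < \infty$ and $\xi \notin \K$, as required. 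I do not expect a substantive technical obstacle at any step: all the real work on the quasipotential side has been carried out in Lemma~\ref{lemma:mm1-xi-infinite}, and the finiteness of $I(\xi \| \xi^*_Q)$ reduces to a one-line calculation once the geometric decay of $\xi^*_Q$ is used to convert the first-moment bound on $\xi$ into a bound on $-\sum \xi(z) \log \xi^*_Q(z)$.
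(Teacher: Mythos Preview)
Your proof is correct and follows essentially the same structure as the paper's: invoke Lemma~\ref{lemma:mm1-xi-infinite} for $V(\xi)=\infty$, and argue that finite first moment forces $I(\xi\|\xi^*_Q)<\infty$ via the geometric form of $\xi^*_Q$. Your treatment of the entropy side is in fact more direct than the paper's: the paper first proves the \emph{unneeded} inclusion $\{I(\cdot\|\xi^*_Q)<\infty\}\subset\{\langle\cdot,f_\infty\rangle<\infty\}$ via the Donsker--Varadhan variational formula and then dismisses the needed reverse inclusion as ``easy to check,'' whereas you supply the explicit one-line bound $I(\xi\|\xi^*_Q)\le -\log(1-\lambda_f/\lambda_b)+\log(\lambda_b/\lambda_f)\,\langle\xi,f_\infty\rangle$. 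You also give a concrete $\xi$ witnessing the hypotheses, which the paper leaves implicit. One small point worth tightening: the decomposition $I(\xi\|\xi^*_Q)=\sum\xi(z)\log\xi(z)-\sum\xi(z)\log\xi^*_Q(z)$ is only immediately valid once you know both sums are finite; it is cleaner to bound termwise, $\xi(z)\log(\xi(z)/\xi^*_Q(z))\le -\xi(z)\log\xi^*_Q(z)$, and sum.
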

\begin{proof}
By the Donsker-Varadhan variational formula (see Donsker and Varadhan~\cite[Lemma~2.1]{donsker-varadhan-75-1}), for any $\xi \in \MZ$ and any bounded function $f$ on $\Z$, we have
\begin{align*}
I(\xi \| \xi^*_Q) \geq \langle \xi, f \rangle - \log \left( \sum_{z \in \Z} \exp\{f(z)\} \xi^*_Q(z)\right).
\end{align*}
Recall the definition of $f_n$ and $f_\infty$ from the proof of Lemma~\ref{lemma:mm1-xi-infinite}. Let $\bar{\beta} > 0$ be such that $\sum_{z \in \Z} \exp\{\bar{\beta}z\} \xi^*_Q(z) < \infty$. Replacing  $f$ by $\bar{\beta}f_n$ in the above display, letting $n \to \infty$ and using the monotone convergence theorem, we arrive at
\begin{align*}
\bar{\beta}\langle \xi, f_\infty \rangle \leq I(\xi \| \xi^*_Q) + \log \left(\sum_{z \in \Z}\exp\{\bar{\beta}z\} \xi^*_Q(z)\right).
\end{align*}
It follows that
\begin{align*}
\{\xi \in \MZ : I(\xi \| \xi^*_Q) < \infty\} \subset \{\xi \in \MZ : \langle \xi, f_\infty \rangle < \infty\}.
\end{align*}
On the other hand, since $\langle \xi_Q^*, f_\infty \rangle < \infty$, it is easy to check that $\{\xi \in \MZ : I(\xi \| \xi^*_Q) < \infty\} \supset \{\xi \in \MZ : \langle \xi, f_\infty \rangle < \infty\}$. 

Let $\xi \in \MZ$ be such that $\langle \xi, \vartheta \rangle = \infty$ and $\langle \xi, f_\infty \rangle < \infty$. Then the above yields $I(\xi \| \xi^*_Q) < \infty$. By Lemma~\ref{lemma:mm1-xi-infinite}, we see that $V_Q(\xi) = \infty$. This completes the proof of the proposition.
\end{proof}

\subsection{A non-interacting WLAN system with constant forward rates}
\label{subsection:counterexamples-wlan}
Recall the model described in Section~\ref{subsection:counterexamples-introduction-wlan}. Define the quasipotential
\begin{align*}
V_W(\xi) \coloneqq \inf \{S^W_{[0,T]}(\varphi | \xi^*_W), \varphi_0 = \xi^*_W, \varphi_T = \xi, T > 0\},\,  \xi \in \MZ,
\end{align*}
where $S^W$ is defined by~\eqref{eqn:rate-function-finite-duration-counter-example} with $\E$ replaced by $\EW$ and $L_\zeta$ replaced by $L^W$ for each $\zeta \in \MZ$. We now state the main result for this non-interacting wireless local area network.
\begin{proposition}
Let $\xi \in \MZ$ be such that $\langle \xi, f_\infty \rangle < \infty$ and $\xi \notin \K$. Then $I(\xi \| \xi^*_W) < \infty$ and $V(\xi) = \infty$. In particular, $V_W \neq I(\cdot \| \xi^*_W)$.
\label{thm:wlan-counterexample}
\end{proposition}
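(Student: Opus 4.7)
The plan is to mirror the proof of Proposition~8.1 step by step, adapting the choice of test functions to the new edge set $\EW$. First I would establish the analogue of Lemma~8.1: if $\xi \notin \K$ then $V_W(\xi) = \infty$. Fix $T > 0$ and a trajectory $\varphi$ with $\varphi_0 = \xi^*_W$ and $\varphi_T = \xi$. Using the variational representation~\eqref{eqn:rate-function-finite-duration-counter-example} with $\E$ replaced by $\EW$, I would plug in the tent functions
\[
f_n(z) = \begin{cases} z, & z \le n, \\ 2n - z, & n+1 \le z \le 2n, \\ 0, & z > 2n, \end{cases}
\qquad
\vartheta_n(z) = \begin{cases} \vartheta(z), & z \le n, \\ \vartheta(2n-z), & n+1 \le z \le 2n, \\ 0, & z > 2n, \end{cases}
\]
exactly as in the M/M/1 case. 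The essential observation for the new edge structure is that every backward edge points to $0$, so for any of the functions above we have $f_n(0) - f_n(z) \le 0$ and $\vartheta_n(0) - \vartheta_n(z) \le 0$ for $z \ge 1$, which makes the corresponding terms $(\exp\{f_n(0) - f_n(z)\} - 1)\lambda_b \varphi_u(z)$ nonpositive and hence automatically bounded. Only the forward terms need to be estimated, and there the bounds $|f_n(z+1) - f_n(z)| \le 1$ and $\vartheta_n(z+1) - \vartheta_n(z) \le 1 + \log(z+1)$ used in the M/M/1 argument go through verbatim.

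Consequently, splitting into the two cases $\langle \xi, f_\infty \rangle = \infty$ and $\langle \xi, f_\infty \rangle < \infty$ as in the proof of Lemma~\ref{lemma:mm1-xi-infinite}, I would obtain, respectively, the inequalities
\[
S^W_{[0,T]}(\varphi | \xi^*_W) \ge \langle \varphi_T, f_n \rangle - \langle \xi^*_W, f_n \rangle - (e-1)\lambda_f T
\]
and
\[
S^W_{[0,T]}(\varphi | \xi^*_W) \ge \langle \xi, \vartheta_n \rangle - \langle \xi^*_W, \vartheta_n \rangle - C \bigl(\textstyle\sup_{t \in [0,T]} \langle \varphi_t, f_\infty\rangle + 1\bigr) T
\]
for an appropriate constant $C$ depending on $\lambda_f$. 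Since $\xi^*_W$ has geometric tails, $\langle \xi^*_W, f_\infty \rangle < \infty$ and $\langle \xi^*_W, \vartheta \rangle < \infty$; letting $n \to \infty$ and invoking monotone convergence yields $S^W_{[0,T]}(\varphi | \xi^*_W) = \infty$ in both cases. Taking the infimum over $T$ and $\varphi$ gives $V_W(\xi) = \infty$.

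For the finiteness of the relative entropy, I would apply the Donsker--Varadhan variational formula to $\xi^*_W$ with the bounded test function $\bar\beta f_n$, where $\bar\beta > 0$ is chosen so that $\sum_z e^{\bar\beta z} \xi^*_W(z) < \infty$; such $\bar\beta$ exists because $\xi^*_W(z) = \frac{\lambda_b}{\lambda_f+\lambda_b}\left(\frac{\lambda_f}{\lambda_f+\lambda_b}\right)^z$ decays geometrically. Passing $n \to \infty$ via monotone convergence gives
\[
\bar\beta \langle \xi, f_\infty \rangle \le I(\xi \| \xi^*_W) + \log\Bigl(\textstyle\sum_{z \in \Z} e^{\bar\beta z} \xi^*_W(z)\Bigr),
\]
so $I(\xi \| \xi^*_W) < \infty$ forces $\langle \xi, f_\infty \rangle < \infty$; the converse inclusion follows routinely from $\langle \xi^*_W, f_\infty \rangle < \infty$ and a direct bound on the logarithmic Radon--Nikodym density.

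Combining these two ingredients, for any $\xi$ with $\langle \xi, f_\infty \rangle < \infty$ but $\xi \notin \K$, we have $I(\xi \| \xi^*_W) < \infty = V_W(\xi)$, whence $V_W \ne I(\cdot \| \xi^*_W)$. I do not anticipate any essential obstacle: the only place that required care in the M/M/1 argument was controlling $(\exp\{\cdot\}-1)$ across every edge in $\EQ$, and for the WLAN graph the backward edges actually make the estimate easier, since the tent functions decrease along every backward transition.
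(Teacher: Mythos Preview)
Your proposal is correct and is precisely the approach the paper indicates: the paper's own proof merely says to follow the lines of Lemma~\ref{lemma:mm1-xi-infinite} and Proposition~\ref{thm:mm1-counterexample}, noting that $\langle \xi^*_W, \vartheta\rangle < \infty$. Your additional observation that the backward edges $(z,0)$ in $\EW$ only simplify the exponential bounds (since $f_n(0)-f_n(z)\le 0$ and $\vartheta_n(0)-\vartheta_n(z)\le 0$) is exactly the adaptation needed, and the remainder of the argument carries over verbatim.
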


We start with the following lemma. The proof follows along similar lines of the proof of Lemma~\ref{lemma:mm1-xi-infinite} by noting that $\langle \xi^*_W, \vartheta  \rangle  <\infty$, and it is left to the reader.
\begin{lemma} If $\xi \in \MZ$ is such that $\xi \notin \K$, then $V_W(\xi) = \infty$.
\end{lemma}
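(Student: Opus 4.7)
The strategy is to mimic the proof of Lemma~\ref{lemma:mm1-xi-infinite} step by step, using the dual representation~\eqref{eqn:rate-function-finite-duration-counter-example} of $S^W_{[0,T]}$ and exploiting that $\xi^*_W$ has geometric decay, so $\langle \xi^*_W, f_\infty \rangle < \infty$ and $\langle \xi^*_W, \vartheta \rangle < \infty$. Fix $\xi \notin \K$, an arbitrary $T > 0$, and an arbitrary $\varphi \in \DMZ$ with $\varphi_0 = \xi^*_W$, $\varphi_T = \xi$; it suffices to show that $S^W_{[0,T]}(\varphi | \xi^*_W) = \infty$ and then take the infimum over $T$ and $\varphi$. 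Specialising~\eqref{eqn:rate-function-finite-duration-counter-example} to time-independent test functions $f$, the $L^W f$ and $\tau$ terms collapse per edge into $\exp\{f(z') - f(z)\} - 1$, so that
\begin{align*}
S^W_{[0,T]}(\varphi | \xi^*_W) \geq \langle \varphi_T, f \rangle - \langle \xi^*_W, f \rangle - \int_{[0,T]} \sum_{(z,z^\prime) \in \EW} (\exp\{f(z^\prime) - f(z)\} - 1) \lambda_{z,z^\prime} \varphi_u(z) \, du.
\end{align*}
The key observation specific to $\EW$ is that every backward edge ends at $0$, so if $f$ is chosen nondecreasing with $f(0) = 0$ then $f(0) - f(z) \leq 0$ for all $z \geq 1$ and the backward edges contribute non-positively to the integral. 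Hence only the forward edges $(z, z+1)$ need to be controlled.

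Case 1: suppose $\langle \xi, f_\infty \rangle = \infty$. Take $f = f_n$ as in the proof of Lemma~\ref{lemma:mm1-xi-infinite}. On forward edges $f_n(z+1) - f_n(z) \in \{-1,0,1\}$, so $\exp\{f_n(z+1) - f_n(z)\} - 1 \leq e - 1$, and the forward contribution is bounded by $\lambda_f (e-1) T$ uniformly in $n$. The backward contribution is $\leq 0$. Therefore $S^W_{[0,T]}(\varphi | \xi^*_W) \geq \langle \xi, f_n \rangle - \langle \xi^*_W, f_n \rangle - \lambda_f (e-1) T$; monotone convergence as $n \to \infty$, together with $\langle \xi^*_W, f_\infty \rangle < \infty$, forces $S^W_{[0,T]}(\varphi | \xi^*_W) = \infty$.

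Case 2: suppose $\langle \xi, f_\infty \rangle < \infty$ but $\langle \xi, \vartheta \rangle = \infty$. Exactly as in the proof of Lemma~\ref{lemma:mm1-xi-infinite}, we may assume WLOG that $\sup_{t \in [0,T]} \langle \varphi_t, f_\infty \rangle < \infty$, since otherwise Case 1 applied at an intermediate time already yields $S^W_{[0,T]}(\varphi | \xi^*_W) = \infty$. Take $f = \vartheta_n$. On forward edges, convexity gives $\vartheta_n(z+1) - \vartheta_n(z) \leq 1 + \log(z+1)$, whence $\exp\{\vartheta_n(z+1) - \vartheta_n(z)\} - 1 \leq e(z+1) - 1$, so the forward contribution is bounded by a constant times $\lambda_f T (1 + \sup_{t \in [0,T]} \langle \varphi_t, f_\infty \rangle)$, independent of $n$. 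The backward contribution is again $\leq 0$. Thus
\begin{align*}
S^W_{[0,T]}(\varphi | \xi^*_W) \geq \langle \xi, \vartheta_n \rangle - \langle \xi^*_W, \vartheta_n \rangle - C T,
\end{align*}
and letting $n \to \infty$ via monotone convergence together with $\langle \xi^*_W, \vartheta \rangle < \infty$ forces the right-hand side to $\infty$. This completes the reduction to the two cases already handled for the M/M/1 counterexample.

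There is no real obstacle beyond bookkeeping: the only deviation from the proof of Lemma~\ref{lemma:mm1-xi-infinite} is the structure of backward edges, but the fact that every backward jump lands at $0$ together with the monotonicity of both test functions $f_n$ and $\vartheta_n$ makes the backward contribution non-positive automatically, so the analysis is if anything cleaner than in the M/M/1 case. One should double-check that the normalisation conventions $f_n(0) = 0$ and $\vartheta_n(0) = 0$ are compatible with the requirement $f \in C_0^1([0,T] \times \Z)$ (each $f_n$ and $\vartheta_n$ has compact support on $\Z$ and is trivially $C^1$ in time), so the dual representation applies for each $n$ and the monotone limit can be taken.
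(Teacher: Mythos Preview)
Your proposal is correct and is exactly the approach the paper has in mind; the paper itself simply says the proof follows the lines of Lemma~\ref{lemma:mm1-xi-infinite} using $\langle \xi^*_W, \vartheta \rangle < \infty$ and leaves it to the reader. One small slip: you justify the non-positivity of the backward contribution by saying ``if $f$ is chosen nondecreasing with $f(0)=0$'', but neither $f_n$ nor $\vartheta_n$ is nondecreasing (both are tent-shaped); the property you actually need and use is that they are \emph{nonnegative} with value $0$ at $0$, which indeed gives $f(0)-f(z)\le 0$ on every backward edge $(z,0)\in\EW$.
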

\noindent Using the above lemma, we can now prove Proposition~\ref{thm:wlan-counterexample} along similar lines of the proof of Proposition~\ref{thm:mm1-counterexample} in the previous section.

\appendix

\section{Proofs of Section~\ref{section:prelims-results}}
\label{appendix:proofs}
\subsection{Proof of Lemma~\ref{lemma:compactness-S}}
Fix $T > 0$, $s > 0$, and $K \subset \MZ$ compact. Given $\nu \in K$, $\varphi \in \Phi_\nu^{[0,T]}(s)$ and a finite set $B \subset \Z$, choosing $f(t,z) = \indf{\{z \in B\}}$ for all $t \in [0,T]$,~(\ref{eqn:weak-equation}) yields
\begin{align*}
\varphi_t(B) - \varphi_r(B) &= \int_{[r,t]} \sum_{(z,z^\prime)\in \E} (f(z^\prime) - f(z)) (1+h_\varphi(u,z,z^\prime)) \lambda_{z,z^\prime}(\varphi_u) \varphi_u(z) du
\end{align*}
for all $0 \leq r < t \leq T$.  Note that we may take $h_\varphi \geq -1$, else the rate function would be infinite
as per \eqref{eqn:rate-function-nonvar} and the definition of $\tau^*$ in \eqref{eqn:taustar}. Therefore, we get 
\begin{align}
|\varphi_t(B) - \varphi_r(B)| & \leq \int_{[0,T]} \sum_{(z,z^\prime) \in \E}(1+h_\varphi(u,z,z^\prime)) \times \indf{\{u \in [r,t]\}} \lambda_{z,z^\prime}(\varphi_u)\varphi_u(z) du.
\label{eqn:finite-compactness}
\end{align}
Noting that
\begin{align*}
\sup\left\{\int_{[0,T]} \sum_{(z,z^\prime) \in \E}\tau^*(h_\varphi(u,z,z^\prime)) \lambda_{z,z^\prime}(\varphi_u)\varphi_u(z) du, \varphi \in \Phi_\nu^{[0,T]}(s), \nu \in K \right\} \leq s,
\end{align*}
it follows that the family $\{1+h_\varphi, \varphi \in \Phi_\nu^{[0,T]}(s), \nu \in K\}$ is uniformly integrable. That is,
\begin{align*}
\sup \left\{\int_{[0,T]} (1+h_\varphi(u,z,z^\prime)) \times \indf{\{1+h_\varphi \geq M\}}\lambda_{z,z^\prime}(\varphi_u)\varphi_u(z) du, \varphi \in \Phi_\nu^{[0,T]}(s), \nu \in K \right\} \to 0
\end{align*}
as $M \to \infty$. Hence for any $M > 0$, using the boundedness of the transition rates (from assumption~\ref{assm:a2}), \eqref{eqn:finite-compactness} yields
\begin{align*}
| & \varphi_t(B) - \varphi_r(B)|\\
& \leq 2M\overline{\lambda} (t-r) + \int_{[0,T]} \sum_{(z,z^\prime) \in \E}(1+h_\varphi(u,z,z^\prime)) \times \indf{\{ 1+h_\varphi \geq M\}} \lambda_{z,z^\prime}(\varphi_u)\varphi_u(z) du.
\end{align*}
for all $0 \leq r < t \leq T$, and $B \subset \MZ$. It follows that
\begin{align*}
& \sup_{\varphi \in \cup_{\nu \in K}\Phi_\nu^{[0,T]}(s)} \sup_{t,r:|t-r|\leq \delta} d(\varphi_t, \varphi_r) \\
& \qquad \leq 2M\overline{\lambda} \delta + \sup_{\varphi \in \cup_{\nu \in K} \Phi_\nu^{[0,T]}(s)} \sup_{t,r:|t-r|\leq \delta} \int_{[0,T]}  \sum_{(z,z^\prime) \in \E}(1+h_\varphi(u,z,z^\prime)) \\
& \qquad \qquad \times \indf{\{1+h_\varphi \geq M\}} \lambda_{z,z^\prime}(\varphi_u)\varphi_u(z) du
\end{align*}
Letting $\delta \to 0$ first and then $M \to \infty$, we arrive at
\begin{align*}
\lim_{\delta \downarrow 0 }\sup_{\varphi \in \cup_{\nu \in K}\Phi_\nu^{[0,T]}(s)} \sup_{t,r:|t-r|\leq \delta} d(\varphi_t, \varphi_r) = 0.
\end{align*}
Hence it follows that $\cup_{\nu \in K}\Phi_\nu^{[0,T]}(s)$ is precompact in $\DMZ$ (see, for example,~Billingsley~\cite[Theorem~12.3]{billingsley-convergence}).

To show that $\cup_{\nu \in K}\Phi_\nu^{[0,T]}(s)$  is closed, let $\{\varphi_n, n\geq 1\} \subset \cup_{\nu \in K}\Phi_\nu^{[0,T]}(s)$ and suppose that $\varphi_n \to \bar{\varphi}$ in $\DMZ$. Note that, for any $f \in C_0^1([0,T] \times \MZ)$, the mapping
\begin{align*}
\varphi & \mapsto \Biggr\{\langle \varphi_T , f_T \rangle  - \langle \varphi_0 , f_0 \rangle - \int_{[0,T]}\langle \varphi_u , \partial_u f_u \rangle du \nonumber \\
& \qquad  - \int_{[0,T]}\langle \varphi_u, L_{\varphi_u} f_u \rangle du - \int_{[0,T]} \sum_{(z,z^\prime) \in \E}\tau(f_u(z^\prime) - f_u(z)) \lambda_{z,z^\prime}(\varphi_u) \varphi_u(z) du \Biggr\}
\end{align*}
is continuous on $\DMZ$, and hence, the mapping
\begin{align*}
\varphi & \mapsto \sup_{f \in C_0^1([0,T]\times\Z)} \Biggr\{\langle \varphi_T , f_T \rangle  - \langle \varphi_0 , f_0 \rangle - \int_{[0,T]}\langle \varphi_u , \partial_u f_u \rangle du \nonumber \\
& \qquad  - \int_{[0,T]}\langle \varphi_u, L_{\varphi_u} f_u \rangle du - \int_{[0,T]} \sum_{(z,z^\prime) \in \E}\tau(f_u(z^\prime) - f_u(z)) \lambda_{z,z^\prime}(\varphi_u) \varphi_u(z) du \Biggr\}
\end{align*}
is lower semicontinuous on $\DMZ$ (see, for example,~Berge~\cite[Theorem~1,~page~115]{berge-topology}). Hence, it follows that
\begin{align*}
S_{[0,T]}(\bar{\varphi}|\bar{\varphi}(0)) \leq \liminf_{n \to \infty} S_{[0,T]}(\varphi_n|\varphi_n(0)) \leq s,
\end{align*}
and it follows that $\cup_{\nu \in K} \Phi_\nu^{[0,T]}(s)$ is closed. Consequently, $\cup_{\nu \in K} \Phi_\nu^{[0,T]}(s)$ is a compact subset of $\DMZ$.
\qed

\subsection{Proof of Theorem~\ref{thm:uniform-ldp-mun}}
In this Section, we prove Theorem \ref{thm:uniform-ldp-mun}. In the case  of finite state space (i.e., when $\Z$ is a finite set), the LDP for the family $\{\mu^N_{\nu_N}, N \geq 1\}$, whenever $\nu_N \to \nu$ in $\MZ$ as $N \to \infty$, was proved in \cite[Theorem 3.1]{borkar-sundaresan-12} under suitable assumptions. The main assumption required in the proof of \cite[Theorem 3.1]{borkar-sundaresan-12} was the boundedness of the ``total outgoing jump rate'' across all the states, which also holds in our countable state space case under Assumptions \ref{assm:a1}--\ref{assm:a3}. So, to prove the LDP for the family $\{\mu^N_{\nu_N}, N \geq 1\}$, whenever $\nu_N \to \nu$ in $\MZ$ as $N \to \infty$, one can go through the steps in \cite[Section 5]{borkar-sundaresan-12} verbatim; we reproduce the important steps here for the sake of completeness. Once this LDP is proved, we then show the uniform LDP over the class of compact subsets of $\MZ$ using \cite[Proposition 1.12, 1.14]{budhiraja-dupuis-book}.

\subsubsection{LDP for $\{\mu_{\nu_N}^N, N \geq 1\}$ when $\nu_N \to \nu$ in $\MZ$}
We first introduce some notation.
Let $\{(X^N_n(t), t \in [0,T]), 1 \leq n \leq N\}$ denote the joint evolution of the states of all the particles. This is a Markov process on $\Z^N$ with the infinitesimal generator acting on functions $f$ on $\Z^N$ given by
\begin{align*}
(z_1, \ldots, z_N) \mapsto \sum_{n=1}^N \sum_{z_n^\prime \in \{z_n+1, 0\}}(f(z_1, \ldots, z_n^\prime, \ldots, z_N) - f(z_1, \ldots, z_N)) \lambda_{z_n, z_n^\prime}(\emp(z_1, \ldots, z_N)),
\end{align*}
where $\emp(z_1, \ldots, z_N):=\frac{1}{N} \sum_{n=1}^N \delta_{z_n} \in \MNZ$. Define the empirical measure
\begin{align*}
\Theta^N : = \frac{1}{N} \sum_{n = 1}^N \delta_{X^N_n(\cdot)};
\end{align*}
$\Theta^N$ is a $\MDZ$-valued random variable. Let $\sigma: \MDZ \to \DMZ$ denote the canonical projection map. Note that $\mu^N(t) = \sigma_t(\Theta^N)$, $t \in [0,T]$. Similarly, let $\{(\bar{X}^N_n(t), t \in [0,T]), 1 \leq n  \leq N\}$ denote the evolution of the independent particles, where each particle executes a Markov process with the   infinitesimal generator $\bar{L}$ defined in \eqref{eqn:Lbar}. Define the corresponding empirical measure $\bar{\Theta}^N$ by
\begin{align*}
\bar{\Theta}^N : = \frac{1}{N} \sum_{n = 1}^N \delta_{\bar{X}^N_n(\cdot)}.
\end{align*}
Let $\PQN_{\nu_N}$ (resp. $\PbarQN_{\nu_N}$) denote the law of $\Theta^N$ (resp. $\bar{\Theta}^N$) with initial condition $\nu_N \in \MNZ$ (i.e., $\frac{1}{N} \sum_{n=1}^N\delta_{X^N_n(0)} = \nu_N$). These are probability measures on $\MDZ$, i.e., $\PQN_{\nu_N}, \PbarQN_{\nu_N} \in  \M_1(\MDZ)$.

Note that $\PQN_{\nu_N} \ll \PbarQN_{\nu_N}$. For $x \in \DZ$ and $\mu \in \DMZ$, define
\begin{align*}
h(x; \mu): = \sum_{t \in [0,T]}\indf{\{x(t) \neq x(t-)\} }  \log \left(\frac{\lambda_{x(t-), x(t)}(\mu_t)}{\widetilde{\lambda}_{x(t-), x(t)}}\right) - \int_{[0,T]}  \sum_{\stackrel{z^\prime \in \Z:}{(x(t-), z^\prime) \in \E}}  \left(\lambda_{x(t-), z^\prime}(\mu_t)  - \widetilde{\lambda}_{x(t-), z^\prime} \right) dt,
\end{align*}
where $\{\widetilde{\lambda}_{z,z^\prime}, (z,z^\prime) \in \E\}$ are the non-interacting rates defined by
\begin{align*}
\widetilde{\lambda}_{z,z^\prime} := \begin{cases}
\overline{\lambda}/(z+1) & \text{ if } z^\prime = z+1, \\
\underline{\lambda} & \text{ if } z^\prime = 0, z \geq 1.
\end{cases}
\end{align*}
Also, define
\begin{align}
h(Q) := \int_{\DZ} h( \cdot ; \sigma(Q)) \, dQ, \quad Q \in \MDZ.
\label{eqn:h}
\end{align}
Using Girsanov's theorem, it is straightforward to check that
\begin{align*}
\frac{d\PQN_{\nu_N}}{d\PbarQN_{\nu_N}}(Q) = \exp\{Nh(Q)\}, \quad Q \in \MDZ.
\end{align*}

We now introduce some notation related to path spaces. Define $\psi : \DZ \to \{0,1,\ldots\}$  by
\begin{align*}
\psi(x) = \sum_{t \in [0,T]} \indf{\{x(t) \neq x(t-)\}};
\end{align*}
$\psi(x)$ is the number of discontinuities in $x$. Since $\Z$ is a countable set, it follows that $\psi(x) < \infty$ for all $x \in \DZ$ (\cite[Chapter 3, Lemma 1]{billingsley-convergence}). Define
\begin{align*}
\X: = \{x \in \DZ : \psi(x) < \infty,  (x(t-), x(t)) \in \E \text{ whenever } x(t) \neq x(t-), \,  t \in [0,T] \},
\end{align*}
and equip $\X$ with the subspace topology. Since $\Z$ is countable, we have that $\psi$ is continuous on $\X$. Define
\begin{align*}
\| f\|_\psi := \sup_{x \in \X} \frac{|f(x)|}{1+\psi(x)}, \, \text{ for } f : \X \to \R.
\end{align*}
Then, define
\begin{align*}
\C_\psi(\X) : = \{f : \X \to \R \text{ such that } f \text{ is continuous and } \|f\|_\psi < \infty\},
\end{align*}
and
\begin{align*}
\Msi := \left\{Q \in \M_1(\X):  \int_{\X} \psi \, dQ < \infty \right\}.
\end{align*}
$\Msi$ is a subset of $\C_\psi(\X)^*$, the algebraic dual of $\C_\psi(\X)$, and we equip it with the weak* topology. This is the coarsest topology on $\Msi$ where we say $Q_N \to Q$ in $\Msi$ as $N \to \infty$ if and only if
\begin{align*}
\int_\X f \, dQ_N \to \int_\X f \, dQ \, \, \text{ as } N \to \infty, \quad \text{ for all } f \in \C_\psi(\X).
\end{align*}

Recall $\bar{P}_z$, $z \in \Z$, from Section \ref{section:exp-tightness}. For each $\nu \in \MZ$, define $J : \M_1(\X) \to [0,\infty]$ by
\begin{align}
J(Q) : =\sup_{f \in \C_\psi(\X)} \left[ \int_{\X} f \, dQ -  \sum_{z \in \Z} \nu(z) \log \int_{\X} \exp\{f\} \, d\bar{P}_z\right].
\label{eqn:JQ}
\end{align}
By \cite[Lemma 5.3]{borkar-sundaresan-12}, we also have
\begin{align}
J(Q)  =\sup_{f \in \C_b(\X)} \left[ \int_{\X} f \, dQ -  \sum_{z \in \Z} \nu(z) \log \int_{\X} \exp\{f\} \, d\bar{P}_z\right],
\label{eqn:JQ-b}
\end{align}
where $\C_b(\X)$ is the space of bounded and continuous functions on $\X$ equipped with the supremum norm.

We first state a lemma for the LDP for  the family $\{\PbarQN_{\nu_N}, N \geq 1\}$ on $\Msi$  whenever $\nu_N \to \nu$ in $\MZ$ as $N \to \infty$. Its proof follows verbatim from \cite[Lemma 5.1]{borkar-sundaresan-12}.
\begin{lemma}[{LDP for the non-interacting system; \cite[Lemma 5.1]{borkar-sundaresan-12}}] Let $\nu_N \to \nu$ in $\MZ$ as $N \to \infty$. Then the family $\{\PbarQN_{\nu_N}, N \geq 1\}$ satisfies the LDP on $\Msi$ with rate function $J$ defined in \eqref{eqn:JQ}.
\label{lemma:nonin}
\end{lemma}

Next, we provide two necessary conditions for the finiteness of $J$ defined in \eqref{eqn:JQ}.
\begin{lemma}[{Finiteness of $J$; \cite[Lemma 5.2]{borkar-sundaresan-12}}] If $J(Q) < \infty$, then  we have $Q \in \Msi$ and $Q \circ \projinv= \nu$.
\label{lemma:Q-Msi}
\end{lemma}
\begin{proof}
Let $Q$ be such that $J(Q) < \infty$. The proof of $Q \circ \projinv= \nu$ follows verbatim from \cite[Lemma 5.2]{borkar-sundaresan-12}. For the first assertion, since $\psi \in \C_\psi(\X)$, from the definition of $J$ in \eqref{eqn:JQ}, we have
\begin{align}
J(Q) \geq \int_\X \psi \, dQ - \sum_{z \in \Z} \nu(z) \log \int_\X \exp\{\psi\} \, d\bar{P}_z.
\label{eqn:J-psi}
\end{align}
Note that, for each $z \in \Z$, under $\bar{P}_z$ (see \eqref{eqn:Lbar}), the number of jumps on $[0,T]$ is stochastically dominated by a Poisson random variable with parameter $(\underline{\lambda}+\overline{\lambda})T \leq 2\overline{\lambda}T$. Therefore,
\begin{align*}
\int_\X \exp\{\psi\} \, d\bar{P}_z \leq \sum_{k \geq 0} \exp\{k\} \frac{ \exp\{-2\overline{\lambda}T\}(2\overline{\lambda}T)^k}{k!} = c_1 < \infty,
\end{align*}
where $c_1$ is some constant independent of $z$. Therefore,
\begin{align*}
\sum_{z \in \Z} \nu(z)  \log \int_\X \exp\{\psi\} d\bar{P}_z < \infty.
\end{align*}
Hence, from \eqref{eqn:J-psi}, using $J(Q) < \infty$, we conclude that
\begin{align*}
\int_\X \psi \, dQ < \infty.
\end{align*}
It follows that $Q \in \Msi$.
\end{proof}

The next lemma is required to prove the continuity of $h$ on $\Msi$.
\begin{lemma}[{see \cite[Lemma 5.7]{borkar-sundaresan-12} for the finite state space case}] Suppose that $Q \in \M_1(\X)$ is such that $J(Q) < \infty$. Then,
\begin{align*}
\lim_{\alpha \to 0}\sup_{t \in [0,T]}\int_\X \sup_{u \in [t-\alpha, t+\alpha] \cap [0,T]} \indf{\{X(u) \neq X(u-)\}}\, dQ(X) = 0.
\end{align*}
\label{lemma:exp-zero}
\end{lemma}
\begin{proof}
Let $P \in \M_1(\X)$ denote the mixture distribution defined by $dP := \sum_{z \in \Z} \nu(z) d\bar{P}_z$. Since $J(Q) < \infty$, it follows that $Q \ll P$. Indeed, using Jensen's inequality, we have,
\begin{align*}
\sum_{z \in \Z} \log \int_\X \exp\{f\} \, dP_z \leq \log \int_\X \exp\{f\} \, dP\, \quad \text{ for any } f \in \C_b(\X), 
\end{align*}
and hence, from \eqref{eqn:JQ-b} and the Donsker-Varadhan variational formula for $I(Q \| P)$, we conclude that 
\begin{align}
I(Q \| P) \leq J(Q).
\label{eqn:IJQ}
\end{align}
Since $J(Q) < \infty$, the above implies that $I(Q \| P) < \infty$.  This shows $Q \ll P$. Hence, with $K_{t, \alpha} = \{x \in \X : x(u) \neq x(u-) \text{ for some } u \in [t-\alpha, t+\alpha] \cap [0,T]\}$, we have
\begin{align}
\int_\X \sup_{u \in [t-\alpha, t+\alpha] \cap [0,T]} \indf{\{X(u) \neq X(u-)\}} \, dQ(X)  & = Q(K_{t, \alpha}) \nonumber \\
& = \int_\X \left(\frac{dQ}{dP}\right) \indf{\{K_{t, \alpha}\}} \, dP \nonumber \\
& \leq \left\|\left(\frac{dQ}{dP}\right) \right\|_{\tau^*, P}  \|\indf{\{K_{t, \alpha}\}} \|_{\tau, P},  \label{eqn:bound-tautaustar}
\end{align}
where the last inequality follows from the H\"older's inequality in Orlicz spaces. Here, $\|\cdot\|_{\tau, P}$ is the Orlicz norm defined by
\begin{align*}
\|f \|_{\tau, P}: = \inf \left\{ a > 0 : \int_\X \tau\left(\frac{|f(x)|}{a}\right) \, dP(x) \leq 1 \right\}.
\end{align*}
Similarly, $\|f \|_{\tau^*, P}$ is defined as above with $\tau$ replaced by $\tau^*$. 

Consider $\left\|\left(\frac{dQ}{dP}\right) \right\|_{\tau^*, P}$. Note that, there exists a $u_0 \geq 1$ such that $\tau^*(u) \leq 2 u \log u$ for all $u \geq u_0$. Therefore,
\begin{align*}
\int_\X \tau^*\left( \frac{dQ}{dP}\right) \, dP & \leq \tau^*(u_0) + 2 \int_\X \left( \frac{dQ}{dP}\right) \log \left( \frac{dQ}{dP}\right) \, dP \\
& = \tau^*(u_0) + 2I(Q \| P) \\
& \leq \tau^*(u_0) + 2 J(Q)\\
& < \infty,
\end{align*}
where the second inequality follows from \eqref{eqn:IJQ} and the third inequality follow from the assumption that $J(Q) < \infty$. Since $\tau^*(u/a) \leq \tau^*(u)/a$ for $a \geq 1$ (by Jensen's inequality), this shows 
\begin{align}
\left\|\left(\frac{dQ}{dP}\right) \right\|_{\tau^*, P} < c_2 < \infty \text{ for some } c_2 \text{ that does not depend on } t.
\label{eqn:bound-tau}
\end{align}

Next, consider  $ \|\indf{\{K_{t, \alpha}\}} \|_{\tau^*, P}$. Note that, under $P$, the number of jumps in $[t-\alpha, t+\alpha] \cap [0,T]$  is stochastically dominated by a Poisson random variable with parameter $2\alpha(\overline{\lambda} + \underline{\lambda}) \leq 4 \alpha \overline{\lambda}$. Therefore, $P(K_{t, \alpha}) \leq 1 - \exp\{-4\alpha\overline{\lambda}\} \leq 4\alpha\overline{\lambda}$. Since $\tau(\indf{\{K_{t, \alpha}\}}/a) = \tau(1/a) \indf{\{K_{t, \alpha}\}}$ for any $a > 0$, we have
\begin{align*}
\int_\X \tau\left(\indf{\{K_{t, \alpha}\}}/a\right) \, dP  = \tau(1/a) P(K_{t, \alpha}) \leq \tau(1/a) 4\alpha\overline{\lambda}.
\end{align*}
Therefore, if we choose $a = 1/(\tau^{-1}(1/4\alpha\overline{\lambda}))$, the right-hand side of the above display becomes $1$. This shows
\begin{align*}
\|\indf{\{K_{t, \alpha}\}} \|_{\tau^*, P} \leq \frac{1}{\tau^{-1}(1/4\alpha\overline{\lambda})}
\end{align*}
for all $t$. Hence, by \eqref{eqn:bound-tautaustar}, \eqref{eqn:bound-tau}, and the previous display, we get
\begin{align*}
\sup_{t \in [0,T]}\int_\X \sup_{u \in [t-\alpha, t+\alpha] \cap [0,T]} \indf{\{X(u) \neq X(u-)\}} \, dQ(X)  \leq \frac{c_2}{\tau^{-1}(1/4\alpha\overline{\lambda})} \to 0 \quad \text{ as  }\alpha \to 0.
\end{align*}
This completes the proof of the lemma.
\end{proof}

Next, we argue the continuity of the projection map $\sigma$.  
\begin{lemma}[{Continuity of $\sigma$; \cite[Lemma 5.8]{bordenave-etal-12}}] Let $Q \in \M_1(\X)$ be such that $J(Q) < \infty$. Then $\sigma : \MDZ \to \DMZ$ is continuous at $Q$.
\label{lemma:sigma-cont}
\end{lemma}
\begin{proof}
Let $Q \in \M_1(\X)$ be such that $J(Q) < \infty$. By Lemma \ref{lemma:Q-Msi}, it follows that $Q \in \Msi$. In \cite[Lemma 2.8]{leonard-95}, for the case when $\nu = \delta_{z_0}$ for some $z_0 \in \Z$, it was shown that $\sigma : \MDZ \to \DMZ$ is continuous at $Q$ whenever $Q \in \Msi$\footnote{ This continuity was shown in \cite{leonard-95} when $\MDZ$ is equipped with the usual weak topology and $\DMZ$ is equipped with the stronger uniform topology. Since the Skorohod topology on $\DMZ$ is coarser than the uniform topology, it follows that $\sigma$ is continuous}. For general $\nu \in \MZ$, by using the result of Lemma \ref{lemma:exp-zero} and following the proof of \cite[Lemma 2.8]{leonard-95} verbatim, we arrive at the continuity of $\sigma$.
\end{proof}

Finally, we have that $h$ is continuous on $\Msi$.

\begin{lemma} Assume \ref{assm:a1}, \ref{assm:a2}, and \ref{assm:a3}. Then the mapping $h$ defined in \eqref{eqn:h} is continuous on $\Msi$.
\label{lemma:h-cont}
\end{lemma}
\begin{proof}
Using Lemma \ref{lemma:exp-zero},  Lemma \ref{lemma:sigma-cont} and  Assumptions \ref{assm:a1}--\ref{assm:a3}, the proof of \cite[Lemma 2.9]{leonard-95} holds verbatim.
\end{proof}

The above lemmas give us the LDP for the family $\{\mu^N_{\nu_N}, N \geq 1\}$ on $\DMZ$ whenever $\nu_N \to \nu$ in $\MZ$ as $N \to \infty$.
\begin{proposition}
\label{prop:finite-ldp}
Assume \ref{assm:a1}, \ref{assm:a2}, and \ref{assm:a3}.  Suppose that $\nu_N \to \nu$ in $\MZ$ as $N \to \infty$. Then, the family $\{\mu^N_{\nu_N}, N \geq 1\}$ satisfies the LDP on $\DMZ$ with rate function $S_{[0,T]}(\cdot | \nu)$ defined in \eqref{eqn:rate-function-finite-duration}.
\end{proposition}
\begin{proof}
Let $\nu_N \to \nu$ in $\MZ$ as $N \to \infty$. By Lemma \ref{lemma:nonin}, we have that $\{\PbarQN_{\nu_N}, N \geq 1\}$ satisfies the LDP on $\Msi$ with rate function $J$. Since $h$ is continuous on the set $\{Q \in \Msi: J(Q) < \infty\}$ (by Lemma \ref{lemma:h-cont}), from Varadhan's lemma, one can conclude that (see \cite[Proof of Theorem 3.1]{borkar-sundaresan-12}) the family $\{\PQN_{\nu_N}\}$ satisfies the LDP on $\Msi$ with rate function $Q \mapsto J(Q) - h(Q)$.  
By Lemma \ref{lemma:sigma-cont}, since  $\sigma$ is continuous (with the usual weak topology on $\MDZ$) at $Q$  when $J(Q) < \infty$, it follows that the restriction of $\sigma$ to $\Msi$  is also continuous (with respect to the stronger topology on $\Msi$) at $Q$ when $J(Q) < \infty$.  Therefore, using the generalized contraction principle (e.g., \cite[Theorem 4.2.23]{dembo-zeitouni}), the LDP for the family $\{\mu^N_{\nu_N}, N \geq 1\}$ on $\DMZ$ follows. The rate function for this LDP can be shown to admit the form given in \eqref{eqn:rate-function-finite-duration} (see, e.g., \cite[Proof of Theorem 3.1]{leonard-95}).
\end{proof}

\subsubsection{Uniform LDP for $\{\mu_{\nu_N}^N, N \geq 1\}$ over the class of compact subsets of $\MZ$}
Proposition \ref{prop:finite-ldp} establishes the LDP for the family $\{\mu^N_{\nu_N}, N \geq 1\}$, whenever $\nu_N \to \nu$ in $\MZ$ as $N \to \infty$. We now extend this to the uniform LDP on the class of compact subsets of $\MZ$. Towards this, we rely  on \cite[Proposition 1.12, 1.14]{budhiraja-dupuis-book}.  Although our definition of the  uniform LDP (Definition \ref{def:uniform-ldp}) has initial conditions lying in $A \cap \MNZ$ (unlike the definition of uniform LDP in  \cite[Definition 1.13]{budhiraja-dupuis-book} where the initial conditions do not depend on the parameter $N$), we can use straightforward modifications of the arguments in \cite[Proposition 1.12, 1.14]{budhiraja-dupuis-book} to prove the desired  uniform LDP. We provide an outline of these arguments here.

We first provide a definition of the uniform Laplace principle over the class of compact subsets of $\MZ$. Recall the definition of the rate function $S_{[0,T]}$ in \eqref{eqn:rate-function-finite-duration}. For $\nu \in \MZ$ and $g \in C_b(\DMZ)$, define
\begin{align*}
F(\nu, g) := - \inf_{\varphi \in \DMZ} \left[g(\varphi) + S_{[0,T]}(\varphi | \nu)\right].
\end{align*}

\begin{definition} 
\label{def:ulaplace}
We say that the family $\{\mu^N_{\nu_N}, N \geq 1\}$ of $\DMZ$-valued random variables defined on a probability space $(\Omega, \mathcal{F}, P)$ satisfies the uniform Laplace principle over the class $\mathcal{A}$ of subsets of $\MZ$ with the family of rate functions $\{S_{[0,T]}(\cdot | \nu), \nu \in \MZ\}$, $S_{[0,T]}(\cdot | \nu) : \DMZ \to [0, +\infty]$, $\nu \in \MZ$, if
\begin{itemize}
\item (Compactness of level sets). For each $K \subset \MZ$ compact and $s \geq 0$, $\bigcup_{\nu \in K}\Phi_\nu(s)$ is a compact subset of $D([0,T],\MZ)$, where $\Phi_\nu(s) \coloneqq \{\varphi \in D([0,T],\MZ): \varphi_0 = \nu, S_{[0,T]}(\varphi | \nu) \leq s\}$;
\item (Uniform Laplace asymptotics) For any $A \in \mathcal{A}$ and $g \in C_b(\DMZ)$, we have
\begin{align*}
\lim_{N \to \infty} \sup_{\nu_N \in A \cap \MNZ} \left|\frac{1}{N} \log E_{\nu_N}\left[ \exp\{-Ng(\mu^N_{\nu_N})\} \right] - F(\nu_N, g)  \right| = 0.
\end{align*}
\end{itemize}
\end{definition}
\noindent This is a modification of \cite[Definition 1.11]{budhiraja-dupuis-book} to the case when the initial conditions are only allowed to lie in $A \cap \MNZ$. We have the following result.
\begin{lemma}[{see \cite[Proposition 1.12]{budhiraja-dupuis-book}}] Assume \ref{assm:a1}, \ref{assm:a2}, and \ref{assm:a3}. 
Then the family $\{\mu^N_{\nu_N}, N \geq 1\}$ satisfies the uniform Laplace principle over the class of compact subsets of $\MZ$ with the family of rate functions $\{S_{[0,T]}(\cdot | \nu), \nu \in \MZ\}$, $S_{[0,T]}(\cdot | \nu) : \DMZ \to [0, +\infty]$, $\nu \in \MZ$.
\label{lemma:unif-laplace}
\end{lemma}
\begin{proof}
By Lemma \ref{lemma:compactness-S}, we have that for each $K \subset \MZ$ compact and $s \geq 0$, $\bigcup_{\nu \in K}\Phi_\nu(s)$ is a compact subset of $D([0,T],\MZ)$, where $\Phi_\nu(s) \coloneqq \{\varphi \in D([0,T],\MZ): \varphi_0 = \nu, I_\nu(\varphi) \leq s\}$.

To show the uniform Laplace asymptotics, let $g \in C_b(\DMZ)$. By Proposition \ref{prop:finite-ldp}, whenever $\nu_N \to \nu$ in  $\MZ$ as $N \to \infty$ we have that the family $\{\mu^N_{\nu_N}, N \geq 1\}$ satisfies the LDP on $\DMZ$ with rate function $S_{[0,T]}(\cdot | \nu)$. Therefore, by Varadhan's lemma (e.g., \cite[Theorem 4.3.1]{dembo-zeitouni}), we have
\begin{align}
\lim_{N \to \infty}\frac{1}{N} \log E_{\nu_N}\left[ \exp\{-Ng(\mu^N_{\nu_N})\} \right]  = F(\nu, g).
\label{eqn:ldp-eqn1}
\end{align}
Define 
\begin{align*}
F^N(\nu_N^\prime, g) : =\frac{1}{N} \log E_{\nu_N^\prime}\left[ \exp\{-Ng(\mu^N_{\nu_N^\prime})\} \right], \quad \nu_N^\prime \in \MNZ.
\end{align*}
Using \eqref{eqn:ldp-eqn1}, we now show that the mapping  $\nu \mapsto F(\nu, g)$ is continuous. To show this continuity, it suffices to show that, given any $\varepsilon > 0$ there exists $\delta > 0$ such that for all $\nu^\prime \in \MZ$ such that $d(\nu^\prime, \nu) < \delta$ and $\nu^\prime_N \in \MNZ$ such that $\nu_N^\prime \to \nu^\prime$ as $N \to \infty$, we have
\begin{align*}
|F^N(\nu_N^\prime,g) - F(\nu, g)| < \varepsilon \quad \text{ for all large enough }   N.
\end{align*}
Indeed, if this is true, sending $N \to \infty$ in the above display and using \eqref{eqn:ldp-eqn1}, we arrive at $|F(\nu^\prime, g) - F(\nu, g) | < \varepsilon$, which shows the continuity of $\nu \mapsto F(\nu, g)$. We now show the above statement using contraposition. Suppose the above statement is not true. Then there exists  $\varepsilon > 0$ and a sequence $\{\nu_N\}$ with  $\nu_N \in \MNZ$ and $\nu_N \to \nu$ as $N \to \infty$ such that $|F^N(\nu_N, g) - F(\nu, g)| > \varepsilon$. Using \eqref{eqn:ldp-eqn1}, we get $|F(\nu, g) - F(\nu, g)| > \varepsilon > 0$, which is a contradiction. This establishes the continuity of the mapping $\MZ \ni \nu \mapsto F(\nu, g)$.

Since $\nu \mapsto F(\nu, g)$ is continuous, using \eqref{eqn:ldp-eqn1},  by the same  arguments in \cite[Proposition 1.12]{budhiraja-dupuis-book}, one can show that  for any compact subset $K$ of $\MZ$,  $\sup_{\nu_N \in K \cap \MNZ} | F^N(\nu_N, g) - F(\nu, g)| \to 0$ as $N \to \infty$. This shows that the family $\{\mu^N_{\nu_N}, N \geq 1\}$ satisfies the uniform Laplace principle over the class of compact subsets of $\MZ$ with the family of rate functions $\{S_{[0,T]}(\cdot | \nu), \nu \in \MZ\}$.
\end{proof}

We can now complete the proof of Theorem \ref{thm:uniform-ldp-mun} using the arguments in \cite[Proposition 1.14]{budhiraja-dupuis-book}.

\begin{proof}[Proof of Theorem \ref{thm:uniform-ldp-mun}]
By Lemma \ref{lemma:unif-laplace}, the family $\{\mu^N_{\nu_N}, N \geq 1\}$ satisfies the uniform Laplace principle $\DMZ$ over the class of compact subsets of $\MZ$ with the family of rate functions $\{S_{[0,T]}(\cdot | \nu), \nu \in \MZ\}$. Restricting the initial conditions to $\MNZ$ and  following the proof of \cite[Proposition 1.14]{budhiraja-dupuis-book} verbatim, we conclude that the family $\{\mu^N_{\nu_N}, N \geq 1\}$ satisfies the uniform LDP on $\MDZ$ over the class of compact subsets of $\MZ$ with the family of rate functions $\{S_{[0,T]}(\cdot | \nu), \nu \in \MZ\}$.
\end{proof}

\section*{Acknowledgements}
The authors were supported by a grant from the Indo-French Centre for Applied Mathematics on a project titled ``Metastability phenomena in algorithms and engineered systems". The first author was supported in part by a fellowship grant from the Centre for Networked Intelligence (a Cisco CSR initiative), Indian Institute of Science, Bangalore; and in part by Office of Naval Research under the Vannevar Bush Faculty Fellowship N0014-21-1-2887. The authors thank two anonymous referees for carefully reading the manuscript and providing valuable comments that improved the paper.

\bibliographystyle{abbrv}
\bibliography{Report}

\noindent \scriptsize{\textsc{S. Yasodharan\newline
Division of Applied Mathematics \newline
Brown University\newline
Providence RI 02912, USA\newline}}
email: \texttt{sarath@alum.iisc.ac.in}

\noindent \scriptsize{\textsc{R. Sundaresan\newline
Department of Electrical Communication Engineering \newline
Indian Institute of Science\newline
Bangalore 560\,012, India\newline}}
email: \texttt{rajeshs@iisc.ac.in}
\end{document}